\newtheorem{thm}{Theorem}[section]
\newtheorem{co}[thm]{Corollary}
\newtheorem{pro}[thm]{Proposition}
\newtheorem{lem}[thm]{Lemma}
\theoremstyle{definition}
\newtheorem{df}[thm]{Definition}
\newtheorem{rem}[thm]{Remark}
\newcommand{\Ol}{\mathsf{O}}
\newcommand{\ol}{\mathsf{o}}
\newcommand{\SD}{\mathsf{SD}}
\newcommand{\Dav}{\mathsf{D}}
\newcommand{\dav}{\mathsf{d}}
\newcommand{\Das}{\mathsf{D}^{\ast}}
\newcommand{\ord}{\mathrm{ord}}
\newcommand{\sequ}[1]{{}^{#1}\!\mathcal{F}}
\newcommand{\freesequ}[1]{{}^{#1}\!\mathcal{A}^{\ast}}
\newcommand{\minsequ}[1]{{}^{#1}\!\mathcal{A}}
\DeclareMathOperator{\supp}{supp}
\DeclareMathOperator{\cm}{cm}
\numberwithin{equation}{section}
\begin{document}

\title{On the Olson and the Strong Davenport constants}

\author[O.~Ordaz, A.~Philipp, I.~Santos, and W.~A.~Schmid]{Oscar Ordaz, Andreas Philipp, Irene Santos and Wolfgang A.\ Schmid}
\address[O.~Ordaz and I.~Santos]{Departamento de Matem\'aticas y Centro ISYS, Facultad de Ciencias,
Universidad Central de Venezuela, Ap. 47567, Caracas 1041-A,
Venezuela.}
\email{oscarordaz55@gmail.com}
\address[A.~Philipp]{Institut f\"ur Mathematik und Wissenschaftliches Rechnen,
Karl-Fran\-zens-Universit\"at Graz,
Heinrichstra\ss e 36,
8010 Graz, Austria.}
\email{andreas.philipp@uni-graz.at}
\address[W.~A. Schmid]{CMLS, {\'E}cole polytechnique, 91128 Palaiseau cedex, France.}
\email{wolfgang.schmid@math.polytechnique.fr}
\keywords{Davenport constant, Strong Davenport constant, Olson constant, zero-sumfree, zero-sum problem}
\thanks{\emph{MSC 2010:} 11B30,11B50,20K01}
\thanks{W.~S. is supported by the Austrian Science Fund (FWF): J 2907-N18.}

\begin{abstract}
A subset $S$ of a finite abelian group, written additively, is called zero-sumfree if the sum of the elements of each non-empty subset of $S$ is non-zero.
We investigate the maximal cardinality of zero-sumfree sets, i.e., the (small) Olson constant.
We determine the maximal cardinality of such sets for several new types of groups; in particular, $p$-groups with large rank relative to the exponent, including all groups with exponent at most five. These results are derived as consequences of more general results, establishing new lower bounds for the cardinality of zero-sumfree sets for various types of groups.
The quality of these bounds is explored via the treatment, which is computer-aided, of selected explicit examples.
Moreover, we investigate  a closely related notion, namely the maximal cardinality of minimal zero-sum sets, i.e., the Strong Davenport constant.
In particular, we determine its value for elementary $p$-groups of rank at most $2$, paralleling and building on recent results on this problem for the Olson constant.
\end{abstract}

\maketitle

\section{Introduction}
\label{sec_intro}

A subset $S$ of a finite abelian group $(G,+,0)$ is called zero-sumfree
if the sum of the elements of each non-empty subset of $S$ is not the zero-element of $G$.
And, a set $S$ is said to have a (non-empty) zero-sum subset if it is not zero-sumfree.

It is a classical problem, going back to Erd\H{o}s and Heilbronn \cite{heilbronn}, to determine the smallest integer $\ell_{G}$ such that each subset $S$ of $G$ with $|S|\ge \ell_{G}$ has a zero-sum subset; or, equivalently (the values of course differ by $1$) to determine the maximal cardinality of a zero-sumfree set.
Now, it is quite common to call this $\ell_G$ the Olson constant of $G$, denoted $\Ol(G)$; this name was introduced
by Ordaz, in 1994 during a seminar held at the Universidad Central de Venezuela (Caracas), as a tribute to Olson's works on this subject \cite{o2,o1}; for the first appearance of this name in print see \cite{LRI}.

Erd\H{o}s and Heilbronn \cite{heilbronn} conjectured that there exists an absolute constant $c$ such that
$\Ol(G)\le c \sqrt{|G|}$. This was proved by Szemer{\'e}di \cite{szemeredi}, and Olson \cite{o2} gave early results on the refined problem of determining a good (or optimal) value of the constant $c$; he obtained $c=3$.
Considerably later, Hamidoune and Z{\'e}mor \cite{H} improved this estimate
to $\Ol(G)\le \sqrt{2} \sqrt{|G|} + \varepsilon (|G|)$ where $\varepsilon(x)$ is $\mathcal{O}(\sqrt[3]{x}\log x)$, which is optimal up to the error-term as an example for cyclic groups shows.
Indeed, Subocz \cite{sub} conjectured that among all finite abelian groups of a given order the Olson constant is maximal for the cyclic group of that order.

Concerning precise values of $\Ol(G)$, Deshouillers and Prakash \cite{Deshouillers} and Nguyen, Szemer{\'e}di, and Vu \cite{vanvu} recently determined it for prime-cyclic groups of sufficiently large order, and, subsequently, Balandraud \cite{eric} obtained this result for all prime-cyclic groups (for the precise value cf.~Section \ref{sec_abstractbound}).
Moreover, Gao, Ruzsa, and Thangadurai \cite{ruzsa} proved $\Ol(C_p \oplus C_p)=(p-1) + \Ol(C_p)$ for sufficiently large prime $p$ (namely, $p> 4.67 \times 10^{34}$ and this was improved to $p>6000$ by Bhowmik and Schlage-Puchta \cite{bhowmik}). In combination, with the above mentioned results on $\Ol(C_p)$ the exact value of  $\Ol(C_p \oplus C_p)$ is thus also known for large primes. In addition, they asserted that
\begin{equation}
\label{eq_GRT}
\Ol(C_n^{r}) \ge (n-1)+\Ol(C_n^{r-1}).
\end{equation}
For $p$-groups of large rank (relative to the exponent) Gao and Geroldinger \cite{GaoGe} proved that the Olson constant is equal to the Davenport constant (see Section \ref{sec_prel} for the definition) in contrast to the above mentioned results where the Olson constant is considerably smaller than the Davenport constant; in particular, for $q$ a prime power and  $r\ge 2q + 1$, it is known that $\Ol(C_{q}^r)= 1 + r(q-1)= (r-1)(q-1) +q$.
In addition, it is known that for $r \ge 2n +1$, we have $\Ol(C_{n}^r)\ge 1 + r(n-1)= (r-1)(n-1) +n$. 
Moreover, Subocz \cite{sub} determined the value of the Olson constant for groups of exponent at most three.

In addition, recently Nguyen and Vu \cite{nguvu2} obtained, as a consequence of a result on the structure of incomplete sets---this includes zero-sumfree sets---in elementary $p$-groups, that 
\[\Ol(C_p^3)\le (2 + \varepsilon) p\] 
for each $\varepsilon > 0$ and prime $p\ge p_{\varepsilon}$; the very recent work of Bhowmik and Schlage-Puchta \cite{bsp2} on the structure of zero-sumfree sequences in $C_p^r$ allows them to obtain such results, too. This shows that equality in \eqref{eq_GRT} holds at least `almost' for $r=3$ and large primes. 

The results of \cite{GaoGe}, on the one hand, show that for $n$ a prime power and $r$ large relative to $q$ equality always holds in \eqref{eq_GRT}.
On the other hand, as pointed out in \cite{GaoGe1}, they also show that equality cannot always hold in \eqref{eq_GRT}, as this would imply $\Ol(C_n)\ge n$, which is not true except for $n\in \{1,2\}$. A problem that remained open up to now is whether or not equality in \eqref{eq_GRT} holds at least for all sufficiently large primes for fixed (small) $r>2$, say $r=3$.

We briefly discuss the contributions of this paper. In this initial discussion, we focus on our contributions for the important special case of (elementary) $p$-groups; our actual investigations are carried out in more generality.

On the one hand, we determine the precise value of the Olson constant for
a larger class of $p$-groups with large rank; roughly, we can replace the condition $r \ge 2q +1$ mentioned above by $r\ge q$ (note that neither the results of \cite{GaoGe} nor ours are limited to homocylic groups, for our precise result see in particular Corollary \ref{lr_co_pgr} and Theorem \ref{thm_homocyclic}; note in our results we use $\SD_1(G)$ to denote $\Ol(G)$, for details see Section \ref{sec_inv}).

On the other hand, we obtain a lower bound for $\Ol(C_p^r)$ for any rank---and various other types of groups---that  `smoothly' interpolates between the two extreme scenarios $ r\ge p$  and $r\le 2$, improving on the existing lower bound for the case of `medium size' rank (see, in particular, Corollary \ref{lr_co_gen} and again Theorem \ref{thm_homocyclic}); the only lower bound known is the one obtained by repeated application of \eqref{eq_GRT} in combination with the bound for cyclic groups, which approximately
yields for $C_n^r$ the lower bound $(r-1)(n-1) + \sqrt{2n}$ while we obtain approximately
$(r-1)(n-1) + \min \{r,n\} + \sqrt{\max \{0, 2(n-r)\} }$.
 In particular, our construction offers
an explanation for the difference between the case of large rank and small rank groups, and shows that equality in \eqref{eq_GRT} fails to hold (already) for $r=3$ for all but finitely many primes (possibly for all but the prime $2$).

In addition to the Olson constant we investigate a closely related but distinct constant called the Strong Davenport constant (introduced by Chapman, Freeze, and Smith \cite{CFS}). We determine its exact value for several new types of $p$-groups, including groups of the form $C_p$ and $C_p^2$ (see Section \ref{sec_large}) as well as groups with large rank (see Corollary \ref{lr_co_pgr}).

Indeed, recasting the problem of determining the Olson constant in a suitable way allows to investigate these two constant in a unified way.
We defer a detailed discussion of the Strong Davenport constant to Section \ref{sec_inv}.

\section{Preliminaries}
\label{sec_prel}

For clarity, we fix our notation regarding standard notions, and
introduce and recall some specific terminology and notation.
In particular, we give a detailed account of all notations related
to sets and sequences as our notation regarding `sets' is somewhat unorthodox,
yet convenient for the present context.

\subsection{General notation}
We denote by $\mathbb{N}$ and $\mathbb{N}_0$ the positive and
non-negative integers, resp.
We denote by $[a,b]=\{z \in \mathbb{Z} \colon a \le z \le b\}$
the interval of integers.

We use additive notation for abelian groups.
For $n \in \mathbb{N}$, let $C_n$ denote a cyclic group of order $n$.
Let $G$ be a finite abelian group.
Then, there exist uniquely determined integers $1 < n_1 \mid \dots  \mid n_r $ such that
 $G\cong C_{n_1} \oplus\ldots \oplus C_{n_r}$.
We denote by  $\exp(G)=n_r$ (except for $|G|=1$, where the exponent is $1$) the exponent of $G$ and by $\mathsf{r}(G)=r$ the rank of $G$. Moreover, for a prime $p$, let $\mathsf{r}_p(G)=|\{i\in [1,r]\colon p \mid n_i\}|$ denote the $p$-rank of $G$.
Moreover, we set $\mathsf{D}^{\ast}(G)=\sum_{i=1}^r(n_i-1)+1$.

We call a group elementary if its exponent is squarefree, a $p$-group if the exponent is a prime power, and
homocyclic if it is of the form $C_n^r$ for $r,n \in \mathbb{N}$.
For an element $g\in G$, we denote by $\ord(g)$ its order.
For $d\in \mathbb{N}$, we denote by $G[d]\subset G$ the subgroup
of elements of order dividing $d$.

\subsection{Sequences and sets}

It is now fairly common (see, e.g., \cite{GaoGe1,barcelona,GeHaBOOK}) to consider sequences---in the context of the problems considered in the present paper---as elements of $\sequ{}(G)$ the, multiplicatively written, free abelian monoid with basis $G$. Of course, in a strict sense, these are not (finite) sequences in the traditional sense, as the terms are not ordered. Yet, this is irrelevant---indeed desirable---for the problems considered here.
We denote the identity element of $\sequ{} (G)$ by $1$ and call it the empty sequence. We refer to a divisor $T\mid S$ as a subsequence, which is compatible with usual intuition regarding this term, and we use the notation
$T^{-1}S$ to denote the unique sequence fulfilling $T (T^{-1}S)=S$, which can be interpreted as the sequence where the terms appearing in $T$ are removed (taking multiplicity into account).
Of course, every map $f:G \to G'$, for finite abelian groups $G$ and $G'$, can be extended in a unique way
to a monoid homomorphism from $\sequ{}(G)$ to $\sequ{}(G')$, which we also denote by $f$.
For $h\in G$, let $s_h: G \to G$ be defined via $g\mapsto g + h$. For $h \in G$ and $S \in \sequ{} (G)$, let  $h+S=s_h(S)$, i.e., the sequence where each
term is `shifted' by $h$.

Let $S\in \sequ{} (G)$, i.e., $S= \prod_{g\in G}g^{v_g}$ with $v_g\in \mathbb{N}_0$ and there exist up to ordering uniquely determined $g_1, \dots, g_{\ell}\in G$
such that $S=g_1 \dots g_{\ell}$.
We denote by $|S|=\ell$ the length, by $\sigma(S)= \sum_{i=1}^{\ell}g_i$  the sum, by $\supp(S)=\{g_1,\dots,g_{\ell}\}=\{g \in G \colon v_g > 0\}$ the support of $S$.
Moreover, we denote for $g\in G$, by $\mathsf{v}_g(S)=v_g$ the multiplicity of
$g$ in $S$ and by $\mathsf{h}(S)=\max \{\mathsf{v}_g(S)\colon g \in G\}$ the height of $S$.

We denote by
$\Sigma(S)=\{\sigma(T)\colon 1 \neq T \mid S\}$ the set of subsums of $S$.
We call $S$ zero-sumfree if $0 \notin \Sigma(S)$, and we call it a zero-sum sequence if $\sigma(S)= 0$.
A minimal zero-sum sequence is a non-empty zero-sum sequence all of whose
proper subsequence are zero-sumfree.

We denote the set of zero-sumfree sequence by $\freesequ{} (G)$ and
the set of minimal zero-sum sequence by $\minsequ{} (G)$.

As mentioned in the introduction, we are mainly interested in zero-sumfree sets and minimal zero-sum sets. However, for our investigations it is crucial to allow for a seamless interaction of `sets' and `sequences', and to consider
sequences with various restrictions on the multiplicities of the terms.

To formalize this, we introduce the following terminology.
Let $G$ be a finite abelian group and $S \in \sequ{}(G)$.
For $\ell\in \mathbb{N}_0$, let $\cm_{\ell}(S) = \sum_{g \in G} \max \{0, \mathsf{v}_g(S)-\ell \}$ the cumulated multiplicity of
level $\ell$. We have $\cm_0(S)=|S|$ and $\cm_1(S)=|S|-|\supp(S)|$.
The most important case for our purpose is the case $\ell=1$,
thus we often simply write $\cm(S)$ for $\cm_1(S)$ and call it the cumulated multiplicity.

We set $\sequ{k}^{\ell} (G)= \{S \in \sequ{} (G)\colon \cm_{\ell}(S)\le k\}$.
Moreover, we set $\minsequ{k}^{\ell}(G) = \sequ{k}^{\ell} (G) \cap \minsequ{} (G)$, and ${}^{k}\!\mathcal{A}^{\ast  \ell} (G)= \sequ{k}^{\ell} (G) \cap \freesequ{} (G)$.
Again, if we do not write the index $\ell$, we mean $\ell= 1$.

In all formal arguments, rather than subsets of $G$ we consider elements of $\sequ{0}(G)=\sequ{0}^1(G)$, and use all notations and conventions introduced for sequences (in the sense of the present paper). In other words, we consider squarefree sequences, i.e., sequences with height at most one, rather than sets. At some points of our arguments, this admittedly causes some inconveniences, yet we prefer this over the ambiguities that would result from mixing notions for sequences and sets, or using multi-sets.

We end the preliminaries, by recalling the definition of and some results on the Davenport constant, which though not the actual subject of our investigations is of considerable relevance for our investigations (for detailed information see, e.g., \cite{GaoGe1,GeHaBOOK}).

The Davenport constant of $G$, denote $\Dav(G)$, is typically defined as the smallest integer $\ell_G$ such that each sequence $S \in \sequ{}(G)$ with $|S|\ge \ell_G$ satisfies $0 \in \Sigma(S)$, i.e., has a non-empty zero-sum subsequence.
Equivalently, it can be defined as $\max \{|S|\colon S \in \minsequ{} (G)\}$, i.e., as the maximal length of a minimal zero-sum sequence. Though, this is easy to see and very well-known, it is a non-trivial assertion;
indeed, the set-analog (cf.~Section \ref{sec_inv}) and the analog assertions when restricted to sequences over subsets of $G$ are well-known to be not true.
Moreover, the small Davenport constant, denoted $\dav(G)$, is defined as
$\max \{|S| \colon S \in \freesequ{} (G)\}$, i.e., the maximal length of a zero-sumfree sequence.
It is easy to see that $\Dav(G) = \dav(G)+1$ and $\Dav(G)\ge \Das(G)$.
Moreover, for several types of groups it is known that $\Dav(G)=\Das(G)$;
in particular, this is true for $G$ a $p$-group and for $G$ a group of rank at most $2$. In addition, it is conjectured that this equality also holds for groups of rank three and homocyclic groups. However, it is well-known that this equality does not always hold (cf.~\cite[Section 3]{GaoGe1}).

\section{The invariants}
\label{sec_inv}

In this section, we recall and introduce the invariants to be studied in the present paper; in Section \ref{sec_abstractbound} we further generalize
one of these notions, yet we defer this for the clarity of the exposition.
In addition we recall and derive some general properties of these invariants.
The main motivation for introducing these new invariants is that we need them to formulate our arguments regarding the classical invariants in an efficient way.
\begin{df}
Let $G$ be a finite abelian group and $k \in \mathbb{N}_0 \cup \{\infty\}$.
\begin{enumerate}
\item We denote by
$\SD_k(G)= \max \{ |S| \colon S \in \minsequ{k}(G) \}$ the maximal length of a minimal zero-sum sequence with cumulated multiplicity (of level $1$) at most $k$; we call it the $k$-th Strong Davenport constant.
\item We denote by $\ol_k(G)= \max \{ |S| \colon S \in \freesequ{k}(G) \}$
the maximal length of a zero-sumfree sequence with cumulated multiplicity at most $k$; we call it the $k$-th small Olson constant.
\item We denote by $\Ol_k(G)$ the smallest integer $\ell_G$ such that
each $S\in \sequ{k}(G)$ with $|S|\ge \ell_G$ has a non-empty zero-sum subsequence; we call it the $k$-th Olson constant.
\end{enumerate}
\end{df}
Since the elements of $\sequ{0}(G)$ are effectively sets,  $\SD_0(G)$ and $\ol_0(G)$ are
the maximal cardinality of a minimal zero-sum set and zero-sumfree set, respectively.
That is, the $0$-th [small] Olson constant is the classical [small] Olson constant, while
the $0$-th Strong Davenport constant is the classical Strong Davenport constant, denoted $\SD(G)$.
The notion Strong Davenport constant was introduced in \cite{CFS} (and further investigated in \cite{baliski,baginski}; yet note that in \cite{chabela}
different terminology is used). To be precise,
the definition for the Strong Davenport constant given in \cite{CFS} is
$\max \{|\supp(S)|\colon S \in \minsequ{} (G)\}$, i.e., the maximal number of distinct elements appearing in a minimal zero-sum sequence, and it is proved (see \cite{CFS}) that this quantity is always equal to $\SD_0(G)$ as defined here---it is now common to use the definition recalled here rather than
the original equivalent one (cf., e.g., \cite[Section 10]{GaoGe1}).

It is known that, in contrast to the situation for the Davenport constant,
$\SD(G)$ does not necessarily equal $\Ol(G)$; it is however known that $\Ol(G)-1 \le \SD(G)\le \Ol(G)$ (see \cite{baliski,baginski}).

First, we collect some facts on the just defined invariants,
and then we establish a relation among them that in fact shows that it is sufficient to consider the invariants $\SD_k(G)$; to highlight the connection to the classical problem and since it is useful in certain arguments, we nevertheless introduce the higher-order Olson constants.

\begin{lem}
\label{lr_lem_basic0}
Let $G$ be a finite abelian group and $k,j \in \mathbb{N}_0 \cup \{\infty\}$.
\begin{enumerate}
\item $\Ol_k(G)=\ol_k(G)+1$.
\item $\ol_k(G)=\dav(G)$ for $k \ge \dav(G)-1$, and $\SD_k(G)=\Dav(G)$ for $k \ge \Dav(G)-1$. In particular, $\ol_{\infty}(G)=\dav(G)$ and $\SD_{\infty}(G)=\Dav(G)$.
\item If $k\le j$, then $\ol_k(G)\le \ol_{j}(G)$ and $\SD_k(G)\le \SD_{j}(G)$. In particular, $\ol_k(G)\le \dav(G)$ and $\SD_k(G)\le \Dav(G)$.
\end{enumerate}
\end{lem}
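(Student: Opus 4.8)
The plan is to read off all three parts directly from the definitions, using only two trivial facts: that $\cm(S)\le k$ together with $k\le j$ implies $\cm(S)\le j$, so that $\freesequ{k}(G)\subseteq \freesequ{j}(G)$ and $\minsequ{k}(G)\subseteq \minsequ{j}(G)$ whenever $k\le j$; and the identity $\cm(S)=|S|-|\supp(S)|$ recorded in Section~\ref{sec_prel}. I would also note at the outset that $\Ol_k(G)$ is well defined, since $\sequ{k}(G)\subseteq \sequ{}(G)$ and hence $\Dav(G)$ is already an admissible threshold for $\sequ{k}(G)$, so the set of $\ell_G$ in the definition is non-empty.

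For part~(1), suppose $S\in \sequ{k}(G)$ with $|S|\ge \ol_k(G)+1$. If $S$ were zero-sumfree it would lie in $\freesequ{k}(G)=\sequ{k}(G)\cap\freesequ{}(G)$ and thus satisfy $|S|\le \ol_k(G)$, a contradiction; hence $0\in \Sigma(S)$, so that $\Ol_k(G)\le \ol_k(G)+1$. For the reverse inequality I would take a zero-sumfree $S_0\in \freesequ{k}(G)$ of length $\ol_k(G)$; it has no non-empty zero-sum subsequence, so the value $\ell_G=\ol_k(G)$ violates the defining property of $\Ol_k(G)$, whence $\Ol_k(G)\ge \ol_k(G)+1$.

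For part~(2), the inequalities $\ol_k(G)\le \dav(G)$ and $\SD_k(G)\le \Dav(G)$ hold for all $k$, because $\freesequ{k}(G)\subseteq \freesequ{}(G)$ and $\minsequ{k}(G)\subseteq \minsequ{}(G)$ and $\Dav(G)$ is the maximal length of a minimal zero-sum sequence (Section~\ref{sec_prel}). For the matching lower bounds I would pick a zero-sumfree sequence $S$ of length $\dav(G)$, respectively a minimal zero-sum sequence $S$ of length $\Dav(G)$: unless $S=1$ one has $\cm(S)=|S|-|\supp(S)|\le |S|-1$, which equals $\dav(G)-1$, respectively $\Dav(G)-1$, so $S\in \freesequ{k}(G)$, respectively $S\in \minsequ{k}(G)$, once $k\ge \dav(G)-1$, respectively $k\ge \Dav(G)-1$; this yields $\ol_k(G)=\dav(G)$ and $\SD_k(G)=\Dav(G)$ in the stated ranges. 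The case $k=\infty$ follows by specialization, or directly from $\sequ{\infty}(G)=\sequ{}(G)$. Part~(3) is then immediate: $k\le j$ gives $\freesequ{k}(G)\subseteq \freesequ{j}(G)$ and $\minsequ{k}(G)\subseteq \minsequ{j}(G)$, so $\ol_k(G)\le \ol_j(G)$ and $\SD_k(G)\le \SD_j(G)$ by taking maxima over larger sets, and the ``in particular'' assertion is the case $j=\infty$ combined with part~(2).

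I do not expect a genuine obstacle: the lemma is essentially bookkeeping, the content being that the constraint $\cm_1\le k$ interacts transparently with zero-sumfreeness, with minimality, and with enlarging $k$. The only mildly delicate point is the degenerate group $G=\{0\}$, where $\dav(G)=0$ forces one to take $S=1$ in part~(2); there all the stated identities are verified directly.
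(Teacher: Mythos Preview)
Your proposal is correct and follows essentially the same approach as the paper: both arguments unpack the definitions, use the bound $\cm(S)\le |S|-1$ for non-empty $S$, and the inclusions $\freesequ{k}(G)\subseteq\freesequ{j}(G)$, $\minsequ{k}(G)\subseteq\minsequ{j}(G)$ for $k\le j$. Your version is slightly more verbose (explicitly checking well-definedness of $\Ol_k(G)$ and the degenerate case $G=\{0\}$), whereas the paper establishes the full equalities $\freesequ{k}(G)=\freesequ{}(G)$ and $\minsequ{k}(G)=\minsequ{}(G)$ in part~(2) rather than picking a single extremal sequence, but these are cosmetic differences.
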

\begin{proof}
1. From the very definitions we get  $\Ol_k(G) > \ol_k(G)$ and $\Ol_k(G)-1\le \ol_k(G)$.

\noindent
2. For each non-empty sequence $S$, we have $\cm(S)\le |S|-1$.
Thus, for each non-empty $S\in \freesequ{} (G)$ we have $\cm(S)\le \dav(G)-1$
and for each $S\in \minsequ{} (G)$ we have $\cm(S)\le \Dav(G)-1$.
Hence, under the respective assumption on $k$, we have $\minsequ{k} (G) = \minsequ{} (G)$ and $\freesequ{k} (G) = \freesequ{} (G)$, and the claim follows by the definitions. The additional statement is now obvious.

\noindent
3. For $k \le j$, we have $\freesequ{k} (G) \subset \freesequ{j} (G)$ and
$\minsequ{k} (G) \subset \minsequ{j} (G)$, and the claim follows.
Using 2, the additional claim follows.
\end{proof}

Now, we link the Strong Davenport constants and the Olson constants,
and establish an additional bound; an important special case is established in \cite{baliski,baginski}.

\begin{lem}
\label{lr_lem_basic}
Let $G$ be a finite abelian group and $k \in \mathbb{N}_0 \cup \{\infty\}$.
\begin{enumerate}
\item $\Ol_k(G) = \SD_{k+1}(G)$.
\item $\ol_{k+1}(G) \le \ol_k(G) + 1$.
\item $\SD_{k+1}(G) \le \SD_{k}(G) + 1$.
\end{enumerate}
\end{lem}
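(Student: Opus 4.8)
The plan is to build a correspondence, changing lengths by exactly one, between zero-sumfree sequences and minimal zero-sum sequences, to read off (1) and (2) from it, and to deduce (3) from (1) and (2) when $1\le k<\infty$, leaving the case $k=0$, which is a reformulation of the bound $\Ol(G)-1\le\SD(G)$ of \cite{baliski,baginski}. For (1): given $S\in\freesequ{k}(G)$, put $g_0=-\sigma(S)$ and $V=S\cdot g_0$, so $\sigma(V)=0$. Then $V$ is a \emph{minimal} zero-sum sequence: for a non-empty proper divisor $T\mid V$, either $g_0\nmid T$, so that $T\mid S$ is non-empty and hence $\sigma(T)\neq 0$ as $S$ is zero-sumfree, or $T=g_0 T'$ with $T'$ a proper divisor of $S$, so that $\sigma(T)=g_0+\sigma(T')=\sigma(T')-\sigma(S)=-\sigma\big((T')^{-1}S\big)\neq 0$ since $(T')^{-1}S$ is a non-empty subsequence of the zero-sumfree $S$. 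As appending one element raises $\cm$ by at most one, $V\in\minsequ{k+1}(G)$; choosing $|S|=\ol_k(G)$ gives $\SD_{k+1}(G)\ge\ol_k(G)+1=\Ol_k(G)$ by Lemma \ref{lr_lem_basic0}.1. Conversely, let $U\in\minsequ{k+1}(G)$ have maximal length, and pick a term $g$ with $\mathsf{v}_g(U)\ge 2$ when $\cm(U)=k+1$ (so that deleting one such term lowers $\cm$ by exactly one) and an arbitrary term when $\cm(U)\le k$. In either case $g^{-1}U$, being a proper subsequence of the minimal zero-sum sequence $U$, is zero-sumfree, and $\cm(g^{-1}U)\le k$; hence $\ol_k(G)\ge|U|-1$, i.e., $\Ol_k(G)\ge\SD_{k+1}(G)$. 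This proves (1).

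Part (2) is the deletion half alone: if $S\in\freesequ{k+1}(G)$ has maximal length and $\cm(S)=k+1\ge 1$, some term $g$ has $\mathsf{v}_g(S)\ge 2$ and $g^{-1}S\in\freesequ{k}(G)$ (a subsequence of a zero-sumfree sequence is zero-sumfree) has length $\ol_{k+1}(G)-1$; and if $\cm(S)\le k$ there is nothing to prove. For (3) with $1\le k<\infty$ one chains (1) (applied for $k$ and for $k-1$), Lemma \ref{lr_lem_basic0}.1, and (2):
\[
\SD_{k+1}(G)=\Ol_k(G)=\ol_k(G)+1\le\ol_{k-1}(G)+2=\Ol_{k-1}(G)+1=\SD_k(G)+1 ;
\]
the case $k=\infty$ is immediate from Lemma \ref{lr_lem_basic0}.2.

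This leaves $k=0$, where by (1) the assertion $\SD_1(G)\le\SD_0(G)+1$ is equivalent to $\ol_0(G)\le\SD_0(G)$, i.e., to the bound $\Ol(G)-1\le\SD(G)$ proved in \cite{baliski,baginski}; I expect this to be the one genuinely delicate point. A direct attack would start from $U\in\minsequ{1}(G)$ of maximal length with unique repeated term $g$ and replace the two terms $g$ and $mg$ of $U$ by the single term $(m+1)g$, where $m\ge 1$ is least with $(m+1)g\notin\supp(U)$; this produces a squarefree minimal zero-sum sequence of length $|U|-1$ unless one is forced into the configuration $\langle g\rangle\setminus\{0\}\subseteq\supp(U)$, which forces $\ord(g)$ to be even, and it is exactly this obstruction that calls for the finer analysis of \cite{baliski,baginski}. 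Every other reduction goes through transparently once the appending/deleting correspondence of (1) and the elementary facts of Lemma \ref{lr_lem_basic0} are in place.
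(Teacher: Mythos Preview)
Your proof is correct and follows essentially the same approach as the paper: append $-\sigma(S)$ to pass from zero-sumfree to minimal zero-sum and delete a term of maximal multiplicity to go back for (1), use the deletion half alone for (2), and deduce (3) for $k\ge 1$ from (1) and (2) while citing \cite{baliski,baginski} for $k=0$. Your write-up is more detailed (explicitly verifying minimality of $V$) and adds a heuristic sketch for the $k=0$ case, but the core argument is identical to the paper's.
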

\begin{proof}
1. Let $S\in \freesequ{k}(G)$. Then, $-\sigma(S)S$ is a minimal zero-sum sequence and $\cm( - \sigma(S) S) \le 1 + \cm(S)\le k+1$. This implies $\Ol_k(G) = \ol_k(G) + 1 \le \SD_{k+1}(G)$.
Conversely, let $A \in \minsequ{k+1}(G)$. Then, $g^{-1}A$ is zero-sumfree for each $g \mid A$,
and we can choose $g\mid A$ in such a way that $\cm(g^{-1}A)\le k$. Thus, $\SD_{k+1}(G)-1 \le \ol_k(G)$.

\noindent
2. For $k=\infty$ the claim is trivial. We assume $k<\infty$.
Let $S \in \freesequ{k+1}(G)$ with $|S|= \ol_{k+1}(G)$.
If $S \in \freesequ{k}(G)$, the inequality follows immediately.
Otherwise, let $g \in \supp(S)$ with $\mathsf{v}_g(S)= \mathsf{h}(S)$. Then, $\cm (g^{-1}S) < \cm (S)$.
Thus $g^{-1}S \in \freesequ{k}(G)$ and $|S|-1 \le \ol_k(G)$.

\noindent
3. For $k\neq 0$ this is immediate by 1 and 2. For $k=0$, the claim is by 1 equivalent to the assertion that $\Ol_0(G)\le \SD_0(G) +1$. This is established in \cite{baliski,baginski} (also cf.~\cite{GaoGe1}).
\end{proof}

In view of the results of this section, we see that all the invariants
 $\SD(G)$, $\Ol(G)$, and $\Dav(G)$ can be expressed as particular instances of
an invariant $\SD_k(G)$, namely $\SD_0(G)$, $\SD_1(G)$, and $\SD_{\infty}(G)$, respectively.

Beyond technical advantages for our subsequent investigations, this has some
conceptual relevance, too. Namely, the fact that $\Ol(G)=\SD_1(G)$ suggests
a heuristic regarding the problem whether for a given group
$\SD(G)=\Ol(G) $ or $\SD(G)=\Ol(G)-1$ holds, a problem that so far was not well-understood.

\begin{rem}
\label{rem_heuristic}
If $\SD(G)$ and $\Ol(G)$ are (much) smaller than $\Dav(G)$, then
it is likely that $\SD(G)=\Ol(G)-1$.
\end{rem}
This is based on the reasoning that if imposing restrictions on the multiplicity has a strong effect on the maximal length of minimal zero-sum sequences fulfilling theses restrictions---documented by the fact that $\SD(G)$ is (much) smaller than $\Dav(G)$---, then relaxing these restrictions should typically already have a slight effect on the maximal length. In Section \ref{sec_large}
we prove some results that support this heuristic.
In particular, these results document that is actually only `likely'
that $\SD(G)=\Ol(G)-1$, and that there are special cases were this fails
(e.g., for $C_p$, $p$ prime, whether or not this is the case, depends on the specific $p$,
yet for almost all, in the sense of density, we have $\SD(C_p)=\Ol(C_p)-1$).

As mentioned above, our motivation for introducing $\SD_k(G)$ is mainly a technical one, to investigate
$\Ol(G)=\SD_1(G)$ and $\SD(G)=\SD_0(G)$.
However, additional investigations of these invariants could be of interest.
For example, one could ask for the smallest $k_G$ such that $\SD_{k_G}(G)=\Dav(G)$;
to determine this $k_G$ would mean, to solve a weak form of the inverse problem associated to $\Dav(G)$ (cf.~Corollary~\ref{lr_co_2r} for details).

\section{An `abstract' lower-bound construction}
\label{sec_abstractbound}

In this section, we establish a fairly flexible construction
principle for zero-sumfree sets (and sequences),
and give some first applications. More specialized investigations and additional
improvements are given in later parts of the paper.

We start by defining a more general version of $\SD_k(G)$.

\begin{df}
Let $G$ be a finite abelian group. Let $k \in \mathbb{N}_0\cup\{\infty\}$
and $\ell\in \mathbb{N}$.
We set $\SD_{(k,\ell)}(G)= \max \{|S| \colon S \in \minsequ{k}^{\ell} (G)\}$, i.e., the maximum of the length of minimal zero-sum sequences of cumulated multiplicity of level $\ell$ at most $k$.
\end{df}
Evidently, $\SD_{(k,1)}(G)= \SD_{k}(G)$.
We refrain from introducing the analogs of other notions, such as the Olson constant, in this more general setting.

The main reason for introducing these invariants, is the following technical
result. Its implications are discussed in later parts of the paper.

\begin{thm}
\label{thm_abstractbound}
Let $G$ be a finite abelian group and $H$ a subgroup of $G$. Let $k_1,k_2 \in \mathbb{N}_0\cup \{\infty \}$.
Then
\[
\SD_{k_1 + \max\{0,k_2-1\}  }(G) \ge \SD_{(k_1,|H|)}(G/H)+ \SD_{k_2}(H) -1 - \epsilon
\]
where $\epsilon = 1 $ if $k_1= 0$ and $|H| \mid \SD_{(0,|H|)}(G/H)$; and $\epsilon =0$ otherwise.
\end{thm}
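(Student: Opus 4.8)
The plan is to produce a long minimal zero-sum sequence over $G$ by \emph{linking} a lift of an extremal constrained sequence over $G/H$ to an extremal constrained zero-sumfree sequence over $H$ through a single, carefully chosen term; the parameter $\epsilon$ then measures whether this linking can be carried out without creating a spurious repetition.

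First I would fix a minimal zero-sum sequence $\bar S$ over $G/H$ with $\cm_{|H|}(\bar S)\le k_1$ and $|\bar S|=\SD_{(k_1,|H|)}(G/H)=:D_1$, and a minimal zero-sum sequence $T$ over $H$ with $\cm(T)\le k_2$ and $|T|=\SD_{k_2}(H)=:D_2$; one may assume $D_1,D_2\ge 2$, the remaining cases being immediate. Let $\pi\colon G\to G/H$ denote the canonical epimorphism, extended to sequences. Two preliminary observations. (i) Since each coset of $H$ has $|H|$ elements, $\bar S$ admits a lift $S$ (that is, $\pi(S)=\bar S$) with $\cm(S)\le k_1$: for a coset of multiplicity $m\le|H|$ in $\bar S$ lift its $m$ copies to $m$ distinct elements, and for a coset of multiplicity $m>|H|$ use each of its elements once together with a surplus of $m-|H|$ further copies; the resulting $\cm(S)$ equals the total surplus, which is $\cm_{|H|}(\bar S)\le k_1$, and the supports attached to different cosets are disjoint. (ii) Picking a term $h^{\ast}$ of $T$ of maximal multiplicity and setting $U:=(h^{\ast})^{-1}T$, we obtain a zero-sumfree sequence over $H$ with $|U|=D_2-1$, $\sigma(U)=-h^{\ast}$, and $\cm(U)\le\max\{0,k_2-1\}$.

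Now pick any term $g_0\mid S$ and put
\[
W:=\bigl(g_0-\sigma(S)+h^{\ast}\bigr)\cdot(g_0^{-1}S)\cdot U .
\]
Then $\sigma(W)=0$, and $W$ is a \emph{minimal} zero-sum sequence over $G$: applying $\pi$ carries any zero-sum subsequence $W'\mid W$ to a zero-sum subsequence of $\bar S\cdot 0^{|U|}$, which by minimality of $\bar S$ equals $0^{j}$ or $\bar S\cdot 0^{j}$ for some $j$ (note $0\notin\supp(\bar S)$ as $|\bar S|\ge 2$); in the first case $W'$ divides $U$, hence $W'=1$ since $U$ is zero-sumfree, and in the second case $W'$ must contain every term of $W$ lying outside $H$, so $W'=(g_0-\sigma(S)+h^{\ast})(g_0^{-1}S)R$ with $R\mid U$ and $\sigma(R)=\sigma(U)$, whence $R=U$ and $W'=W$. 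Moreover $|W|=D_1+D_2-1$, and since the support of $(g_0-\sigma(S)+h^{\ast})(g_0^{-1}S)$ consists of preimages of $\supp(\bar S)$ and so avoids $H$, while $\supp(U)\subseteq H$, we get
\[
\cm(W)=\cm\bigl((g_0-\sigma(S)+h^{\ast})(g_0^{-1}S)\bigr)+\cm(U)\le (k_1+1)+\max\{0,k_2-1\},
\]
the extra $1$ allowing for the case that $g_0-\sigma(S)+h^{\ast}$ already occurs in $g_0^{-1}S$. Hence $\SD_{k_1+\max\{0,k_2-1\}+1}(G)\ge D_1+D_2-1$, and Lemma~\ref{lr_lem_basic}(3) gives $\SD_{k_1+\max\{0,k_2-1\}}(G)\ge D_1+D_2-2$, which is the asserted bound with $\epsilon=1$.

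It remains to recover the extra unit when $\epsilon=0$, i.e.\ when it is not the case that both $k_1=0$ and $|H|\mid D_1$; equivalently, to arrange that $g_0-\sigma(S)+h^{\ast}$ is not already a term of $g_0^{-1}S$, which forces $\cm(W)\le k_1+\max\{0,k_2-1\}$ and hence $\SD_{k_1+\max\{0,k_2-1\}}(G)\ge D_1+D_2-1$. When $k_1\ge 1$ this always succeeds: if some coset has multiplicity $>|H|$ in $\bar S$ its surplus preimages can be varied so that $\sigma(S)$ runs over all of $H$ at no cost to $\cm(S)$, and otherwise one unit of $\cm(S)$ (affordable since $k_1\ge 1$) can be spent to the same effect; in either case one may even arrange $\sigma(S)=h^{\ast}$, so that $W=S\cdot U$ and $g_0$ itself does the linking. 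When $k_1=0$ the lift $S$ is a set, and one argues via subset sums: $\sigma(S)$ is a sum, over the cosets in $\supp(\bar S)$, of the sum of an $m_{\bar g}$-element subset of that coset, and the set of sums of $m$-element subsets of a group of order $n$ is the whole group for every $1\le m\le n-1$, apart from a single exceptional family whose deficient sets nonetheless add up to the whole group as soon as two such cosets occur; since a maximal $\bar S$ with $|H|\nmid D_1$ cannot have all its cosets of multiplicity $|H|$, one deduces that $\sigma(S)=h^{\ast}$ is realizable for a suitable choice of $h^{\ast}$ among the maximal-multiplicity terms of $T$. I expect this last step — pinning down exactly which sums are attainable by bounded-multiplicity lifts of $\bar S$, matching them against the sums attainable by long low-multiplicity zero-sumfree tails over $H$, and verifying that a unit is lost precisely in the stated case — to be the main obstacle.
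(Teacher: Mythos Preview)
Your approach is essentially the paper's: lift an extremal $\bar S\in\minsequ{k_1}^{|H|}(G/H)$ to $S$ with $\cm(S)\le k_1$, remove a maximal-multiplicity term $h^{\ast}$ from an extremal $T\in\minsequ{k_2}(H)$, and splice the two pieces together via a single adjusted term so that the result is minimal zero-sum with controlled $\cm$. The verification that $W$ is minimal zero-sum, the disjoint-support bookkeeping for $\cm$, and the use of subset sums in cosets (your reference to ``sums of $m$-element subsets'' is exactly the paper's Lemma~\ref{lr_lem_sum}) all match the paper.

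There is one genuine, if small, difference in packaging. For the fallback bound with $\epsilon=1$ (i.e.\ $k_1=0$ and $|H|\mid D_1$) you first prove $\SD_{k_1+\max\{0,k_2-1\}+1}(G)\ge D_1+D_2-1$ with a possibly colliding link and then invoke Lemma~\ref{lr_lem_basic}(3) to descend one step. The paper instead modifies the quotient sequence, replacing $\bar S$ by $\bar S'=(g_1+g_2)g_1^{-1}g_2^{-1}\bar S$ (with $g_1+g_2\notin\supp(\bar S)$), which has length $D_1-1$ but now has a coset of multiplicity strictly between $0$ and $|H|$, so the $\epsilon=0$ argument applies to $\bar S'$. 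Your route is shorter; the paper's is more constructive and keeps track of the witnessing sequence explicitly (which is useful for the remark after the theorem that $\epsilon$ can sometimes be taken $0$ even in the divisibility case).

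For the part you flag as the main obstacle ($k_1=0$, $|H|\nmid D_1$), your sketch is on target. The paper isolates a single coset of multiplicity $v\in[1,|H|-1]$, freezes a lift of the remaining cosets, and then chooses a $v$-subset of $H$ with prescribed sum via Lemma~\ref{lr_lem_sum}; the only obstruction is the elementary $2$-group exception at $v\in\{2,|H|-2\}$, and there one adjusts the target $h^{\ast}$ by passing to an automorphic image of $T$. Your variant---varying $h^{\ast}$ among the maximal-multiplicity terms of $T$---also works, since in an elementary $2$-group every minimal zero-sum sequence of length $\ge 2$ is squarefree and hence offers at least two choices. So the obstacle is smaller than you feared.
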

As is apparent from the proof, in certain cases we can choose $\epsilon=0$ even if
$k_1=0$ and $|H| \mid \SD_{(0,|H|)}(G/H)$; we do not formalize this claim, yet encounter it in
Section \ref{sec_large}.

Before proving this result, we make some observations on
$\SD_{(k,\ell)}(G)$.
We start by collecting some general properties, which partly expand on Lemma \ref{lr_lem_basic0}.

\begin{lem}
Let $G$ be a finite abelian group. Let $k,k' \in \mathbb{N}_0\cup\{\infty\}$
and $\ell,\ell' \in \mathbb{N}$.
\begin{enumerate}
\item If $k'\le k$ and $\ell' \le \ell$, then $\SD_{(k',\ell')}(G)\le \SD_{(k,\ell)}(G)$.
\item $\SD_{(k,\ell)}(G)\le \Dav(G)$, and if $k +\ell \ge \Dav(G)$ or $\ell \ge \exp(G)$, then equality holds.
\end{enumerate}
\end{lem}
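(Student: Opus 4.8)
The plan is to prove the two items of the lemma directly from the definition of $\SD_{(k,\ell)}(G)$ as a maximum over $\minsequ{k}^{\ell}(G)$, the set of minimal zero-sum sequences $S$ with $\cm_{\ell}(S)\le k$.

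For item (1), the point is simply that enlarging the allowed cumulated multiplicity, or the level at which it is measured, can only enlarge the set of admissible sequences. Concretely, I would first observe that $\cm_{\ell}(S)$ is non-increasing in $\ell$: since $\cm_{\ell}(S)=\sum_{g\in G}\max\{0,\mathsf{v}_g(S)-\ell\}$, each summand is non-increasing in $\ell$, so $\ell'\le\ell$ gives $\cm_{\ell}(S)\le\cm_{\ell'}(S)$. Hence if $S\in\minsequ{k'}^{\ell'}(G)$ and $k'\le k$, $\ell'\le\ell$, then $\cm_{\ell}(S)\le\cm_{\ell'}(S)\le k'\le k$, so $S\in\minsequ{k}^{\ell}(G)$. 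Thus $\minsequ{k'}^{\ell'}(G)\subset\minsequ{k}^{\ell}(G)$ and the maxima compare accordingly (with the usual convention that the maximum over a subset is at most the maximum over the set; both sets are non-empty since they contain, e.g., the minimal zero-sum sequence $g^{\ord(g)}$ for a suitable $g$, and are finite because lengths are bounded by $\Dav(G)$).

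For item (2), the inequality $\SD_{(k,\ell)}(G)\le\Dav(G)$ is immediate from $\minsequ{k}^{\ell}(G)\subset\minsequ{}(G)$ together with the characterization $\Dav(G)=\max\{|S|\colon S\in\minsequ{}(G)\}$ recalled in the preliminaries. For the equality under $k+\ell\ge\Dav(G)$: take any $S\in\minsequ{}(G)$ with $|S|=\Dav(G)$; then every term has multiplicity at most $\ord(g)\le\exp(G)$, and more to the point I want $\cm_{\ell}(S)\le k$. Here I would argue that $\cm_{\ell}(S)\le\cm_0(S)-\ell=|S|-\ell=\Dav(G)-\ell\le k$ whenever $S$ has at least... actually the cleaner route: $\cm_{\ell}(S)\le\max\{0,|S|-\ell\}$ always (removing $\ell$ copies overall can only help), so $\cm_{\ell}(S)\le|S|-\ell=\Dav(G)-\ell\le k$, giving $S\in\minsequ{k}^{\ell}(G)$ and hence $\SD_{(k,\ell)}(G)\ge\Dav(G)$. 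For the case $\ell\ge\exp(G)$: every $g$ with $\mathsf{v}_g(S)>0$ in a minimal zero-sum sequence satisfies $\mathsf{v}_g(S)\le\ord(g)\le\exp(G)\le\ell$, so $\cm_{\ell}(S)=0\le k$ for all such $S$, whence $\minsequ{k}^{\ell}(G)=\minsequ{}(G)$ and equality holds regardless of $k$.

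I do not expect a genuine obstacle here; the statement expands on Lemma \ref{lr_lem_basic0} and is of the same routine flavour. The one place to be slightly careful is the elementary inequality $\cm_{\ell}(S)\le\max\{0,|S|-\ell\}$ (equivalently that distributing the "$-\ell$" across several elements is worse than concentrating it): this follows since $\sum_g\max\{0,\mathsf{v}_g(S)-\ell\}\le\sum_g\mathsf{v}_g(S)-\ell=|S|-\ell$ as soon as some term has positive multiplicity and $|S|\ge\ell$, and is $0\le|S|-\ell$... — in fact one just needs $\cm_\ell(S)\le\cm_0(S)-\ell$ when $|S|\ge\ell$, which is clear because passing from level $0$ to level $\ell$ removes at least $\ell$ from the total (at least one term is present). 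This suffices for the bound $\Dav(G)-\ell\le k$ used above.
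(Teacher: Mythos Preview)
Your proposal is correct and follows essentially the same approach as the paper. The paper's proof is even terser---it simply asserts that under either condition one has $\minsequ{k}^{\ell}(G)=\minsequ{}(G)$ (not just that an extremal sequence lies in the smaller set), but the underlying reason is the same inequality $\cm_{\ell}(S)\le\max\{0,|S|-\ell\}$ that you spell out; your last paragraph is a bit meandering but the argument (via $\cm_0(S)-\cm_{\ell}(S)=\sum_g\min\{\mathsf{v}_g(S),\ell\}\ge\min\{|S|,\ell\}$) is sound.
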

\begin{proof}
1. Since for $\ell' \le \ell$, we have $\cm_{\ell'}(S)\ge \cm_{\ell}(S)$ for each $S\in \sequ{}(G)$, it follows, for $k'\le k$, that $\minsequ{k'}^{\ell'}(G)\subset \minsequ{k}^{\ell}(G)$.  The claim follows.

\noindent
2. Since $\minsequ{k}^{\ell}(G)\subset \minsequ{}(G)$, and under the
imposed conditions equality holds, the claim follows.
\end{proof}
If $G$ is not cyclic, the condition $\ell\ge \exp(G)$,
can be weakened to $\ell \ge \exp(G)-1$.

In order to get explicit lower bounds from Theorem \ref{thm_abstractbound},
we need lower bounds for $\SD_{(k,\ell)}$ for cyclic groups.
We recall a well-known construction and comment
on its quality (cf.~\cite{eric,vanvuH}).

\begin{lem}
\label{lem_standardbound}
Let $n,\ell \in \mathbb{N}$ and $k\in \mathbb{N}_0 \cup \{ \infty \}$.
\begin{enumerate}
\item If $k + \ell \ge n$, then $\SD_{(k,\ell)}(C_n)=n$,
\item Suppose $k\le n-1$.
Let $d=\bigl \lfloor \frac{-1 + \sqrt{1 + 8(n-k)/\ell}}{2}\bigr \rfloor$.
Then,
\[\SD_{(k,\ell)}(C_n)\ge k + \ell d + \left\lfloor\frac{n-k}{d+1}-\frac{\ell d}{2}\right \rfloor.\]
In particular, $\SD_k(C_n)\ge k  + \bigl \lfloor \frac{-1 + \sqrt{1 + 8(n-k)}}{2}\bigr \rfloor$.
\end{enumerate}
\end{lem}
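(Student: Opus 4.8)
The plan is to exhibit explicit minimal zero-sum sequences over $C_n$ with controlled cumulated multiplicity of level $\ell$, achieving the claimed length. For part (1), the construction is immediate: if $k+\ell \ge n$, then in $C_n = \langle e \rangle$ (with $e$ of order $n$) consider the sequence $e^{n}$, which is a minimal zero-sum sequence of length $n$; its cumulated multiplicity of level $\ell$ is $\max\{0, n-\ell\} \le k$, so $S \in \minsequ{k}^{\ell}(C_n)$, giving $\SD_{(k,\ell)}(C_n) \ge n$, and the reverse inequality is the trivial bound $\SD_{(k,\ell)}(C_n) \le \Dav(C_n) = n$ from the previous lemma.

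For part (2), I would build a sequence consisting of three blocks over $C_n = \langle e\rangle$. First, a block $e^{k}$ contributing $k$ to the length and, being of height $k$, contributing $\max\{0,k-\ell\}$ to $\cm_\ell$; actually it is cleaner to reserve the "$k$ excess" for the main block, so instead let the sequence be of the form $\bigl(\prod_{j=1}^{d}(je)^{\ell}\bigr)\cdot (je \text{-type tail})$. Concretely: take the block $B = \prod_{j=1}^{d} (je)^{\ell}$, which has length $\ell d$, has $\cm_\ell(B) = 0$, and has sum $\sigma(B) = \ell \binom{d+1}{2} e = \frac{\ell d(d+1)}{2} e$. Then append a block of the form $(me)^{\ell}$ or $(me)^{t}$ for a suitable single residue $m$ and multiplicity $t$, plus possibly $e^{k}$ where the multiplicity $k$ is allowed "for free" by the level-$\ell$ cumulated multiplicity budget only up to $k+\ell$ copies... — this bookkeeping is exactly what the floor expression encodes. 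The key point is that one wants the total sum to be $0$ in $C_n$ (i.e., the coefficient of $e$ to be a multiple of $n$), the sequence to be a \emph{minimal} zero-sum sequence, and $\cm_\ell \le k$; and the number $d$ is chosen as the largest integer with $\ell\binom{d+1}{2} \le n-k$, i.e., $\ell \frac{d(d+1)}{2} \le n-k$, which rearranges to $d \le \frac{-1+\sqrt{1+8(n-k)/\ell}}{2}$, matching the stated $d$.

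The length count then goes: the distinct-support block $B$ contributes $\ell d$; raising one of the small residues $e$ to high multiplicity contributes $k$ more to the length while only using the level-$\ell$ budget (since $\cm_\ell(e^{k+\ell}) = k$); and the remaining "room" up to $n$ in the coefficient of $e$ is $n - k - \ell\binom{d+1}{2}$, which can be filled using copies of $(d+1)e$ (each contributing $d+1$ to the coefficient, $1$ to the length, and $0$ to $\cm_\ell$ as long as we use at most $\ell$ of them — and one checks $\lfloor \frac{n-k}{d+1} - \frac{\ell d}{2}\rfloor \le \ell$ by maximality of $d$), yielding $\lfloor \frac{n-k-\ell d(d+1)/2}{d+1}\rfloor = \lfloor \frac{n-k}{d+1} - \frac{\ell d}{2}\rfloor$ further terms. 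Summing gives length $k + \ell d + \lfloor \frac{n-k}{d+1} - \frac{\ell d}{2}\rfloor$. The special case $\ell = 1$, $k$ arbitrary recovers $\SD_k(C_n) \ge k + \lfloor\frac{-1+\sqrt{1+8(n-k)}}{2}\rfloor$ once one checks the floor term is nonnegative (so it may be dropped or absorbed).

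The main obstacle I expect is verifying \emph{minimality} of the constructed zero-sum sequence: one must confirm that no proper nonempty subsequence already sums to $0$ in $C_n$. This reduces to a statement about the coefficients of $e$: the multiset of "coefficient contributions" is $\{1^{(k)}, 1^{(\ell)}, 2^{(\ell)}, \dots, d^{(\ell)}, (d+1)^{(\text{a few})}\}$ (reading each $je$ as contributing $j$), and one needs that no sub-multiset sums to a positive multiple of $n$; since every such partial sum is a strictly positive integer that is at most the total (which equals $n$ exactly, or a bit less before we finalize the fill), and one arranges the construction so the total is exactly $n$ with all proper partial sums strictly between $0$ and $n$, minimality follows — but getting the fill-in block and the high-multiplicity block to cooperate so that the total is \emph{exactly} $n$ rather than merely $\le n$, and simultaneously keeping all partial sums below $n$, is the delicate part; one typically adjusts the multiplicity of the single residue $e$ (the "$k$-block", which can absorb any residue $0 \le r < d+1$ left over) to hit $n$ on the nose. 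A careful case analysis on the value of $(n-k) \bmod (d+1)$ and on whether $d = 0$ handles the edge cases.
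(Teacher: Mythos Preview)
Your construction is the same as the paper's: for (1) take $e^n$; for (2) take $S = e^k \bigl(\prod_{i=1}^d (ie)\bigr)^{\ell} ((d+1)e)^{d'}$ with $d' = \bigl\lfloor \frac{n-k}{d+1} - \frac{\ell d}{2}\bigr\rfloor$, and your verification that $d' < \ell$ (so that $\cm_\ell(S)\le k$) is correct. Where you go astray is the finishing step. The integer coefficient sum of $S$ is $k + \ell d(d+1)/2 + d'(d+1) \le n$, but in general it falls short of $n$ by some $r \in [0,d]$, so $S$ may only be zero-sumfree, not a minimal zero-sum sequence. Your proposed remedy---padding the bottom residue $e$ to absorb the residue $r$---does not work as stated: adding $r$ copies of $e$ raises $\cm_\ell$ from $k$ to $k+r$ and breaks the budget, while removing copies shortens $S$ below the claimed length. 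So the ``careful case analysis'' you defer to cannot be carried out at the bottom of the sequence.

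The paper's one-line fix adjusts the \emph{top} end instead. Since the total coefficient sum is at most $n$, every proper nonempty subsequence has coefficient sum strictly between $0$ and $n$, so $S$ is zero-sumfree or already minimal zero-sum. Now remove from $S$ the term $je$ with $j$ maximal and replace it by $-\sigma$ of what remains; this element is $j'e$ with $j' = j+r \ge j$. The resulting $S'$ is a minimal zero-sum sequence of the same length $|S|$, and since $j'e$ is either the original $je$ (when $r=0$) or a fresh element not occurring in $S$ (when $r>0$), one has $\cm_\ell(S') \le \cm_\ell(S) \le k$. No case analysis on $(n-k)\bmod (d+1)$ is needed, and minimality is immediate.
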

\begin{proof}
Let $e$ be a generating element of $C_n$.

\noindent
1. It suffices to note that $\cm_{\ell}(e^n) = n - \ell \le k$.

\noindent
2. Let $d' =\bigl \lfloor\frac{n-k}{d+1}-\frac{\ell d}{2}\bigr \rfloor$.
We note that $d$ is the largest element in $\mathbb{N}_0$ such that $\ell d(d+1)/2 \le n-k$ and we
let $d'$ denote the largest element in $ \mathbb{N}_0$ such that $d'(d+1)\le n-k-\ell d(d+1)/2$.
We consider the sequence $S=e^{k} (\prod_{i=1}^{d}(ie))^{\ell}((d+1)e)^{d'}$.
Since $k + \ell d(d+1)/2 + d' (d+1)\le n$ it follows that $S$ is a minimal zero-sum sequence or zero-sumfree. Moreover, $\cm_{\ell}(S)\le k$.
Let $je\mid S$ with $j$ maximal, and set
$S' = (-\sigma((je)^{-1}S))(je)^{-1}S$. Then $S'$ is a minimal zero-sum sequence with $\cm_{\ell}(S')\le \cm_{\ell}(S)$; note that $-\sigma((je)^{-1}S)) = j'e$ with $j' \in [j,n]$.
Thus, $\SD_{(k,\ell)}(C_n)\ge |S'|$, and the claim follows.
The additional statement follows, noting that $d'=0$ in view of $\ell=1$.
\end{proof}

The lower bound for $\SD_{(k,\ell)}(C_n)$ is not always optimal (see Section \ref{sec_large} for additional discussion). Yet, for $(k,\ell)=(1,1)$ it is always close to the true value (cf.~Section \ref{sec_intro}), and this should be the case for all other  $(k,\ell)$, too.
Indeed, in certain cases the optimality is known, i.e., the constant actually
is equal to the lower bound. For $n$ prime and $k\neq 0$ this follows by a recent result of Balandraud \cite{eric} (to see this,
consider an extremal minimal zero-sum sequence and remove an element with maximal multiplicity, and apply \cite[Theorem 8]{eric} to the resulting zero-sumfree sequence; the conditions in the above lemma correspond to the extremal case of
that result). And, for large $k+\ell$ (namely, for $k+\ell \ge \lfloor n/2  \rfloor +2$, and even slightly below this value), this is
a direct consequence of a result on the structure of long minimal zero-sum sequences, see \cite{chen,yuan} and also \cite[Section 5.1]{barcelona} for an exposition.
For a detailed investigation of the case $k=0$ and $\ell=1$, i.e. $\SD(C_n)$, and prime $n$,  see Section \ref{sec_large}; in this case equality at the lower bound does not always (though, most of the time) hold.

For the proof of Theorem \ref{thm_abstractbound},
as well as in some other arguments,
we need the following result that expands on \cite[Lemma 7.1]{GaoGe}.

\begin{lem}
\label{lr_lem_sum}
Let $G$ be a finite abelian group, $g \in G$, and $k \in [1,|G|-1]$.
There exists some $S \in \sequ{0}(G)$ with $|S|=k$ and $\sigma(S)=g$,
except if all of the following conditions hold: $\exp(G)=2$, $k\in \{2, |G|-2\}$, and $g=0$.
\end{lem}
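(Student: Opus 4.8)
The plan is to prove this by an explicit construction, treating cases according to the size of $k$ relative to $|G|$, and checking that the listed exceptional case is genuinely an obstruction while all others are not.

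\medskip

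\noindent\textbf{Overall approach.} First I would observe that the statement is invariant under the substitution $k \mapsto |G|-k$ together with $g \mapsto -g$: if $S$ is a squarefree sequence with $\supp(S) \subseteq G$, $|S|=k$, $\sigma(S)=g$, then $T = S^{-1}(\prod_{h\in G}h)$ is squarefree, has length $|G|-k$, and $\sigma(T) = \sigma(G) - g$, where $\sigma(G) = \sum_{h\in G}h$. It is classical that $\sigma(G) = 0$ unless $G$ has a unique element of order $2$, in which case $\sigma(G)$ is that element. This duality lets me restrict to, say, $k \le |G|/2$ (handling the middle case separately if needed), and it also explains the symmetry in the exceptional set ($k \in \{2, |G|-2\}$). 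Concretely, I would dispose of the cases $k=1$ (take $S=g$) and then induct or build up.

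\medskip

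\noindent\textbf{Key steps.} The main construction: enumerate $G \setminus \{0\} = \{h_1, \dots, h_{|G|-1}\}$ in some order, and try to pick a $k$-subset summing to $g$. The clean way is to proceed greedily/inductively on $k$. For the base of a useful induction, note that pairs $\{h, -h\}$ with $h \neq -h$ sum to $0$, and for any element $g$ there is a small set summing to $g$: if $g \neq 0$ take $\{g\}$, and if $g = 0$ and $\exp(G) > 2$ take $\{h,-h\}$ for some $h$ of order $>2$, a $2$-set; if $\exp(G)=2$ and $g=0$ one needs at least $3$ elements (any two distinct nonzero elements sum to the third nonzero element, never to $0$ when $\mathsf{r}(G)\ge 2$, and when $\mathsf r(G)=2$ a $3$-set $\{a,b,a+b\}$ sums to $0$; this is where the $k=2$ exception for $\exp(G)=2$, $g=0$ enters, and by duality the $k=|G|-2$ exception). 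Then, to go from a valid $(k)$-set to a valid $(k+2)$-set, adjoin a pair $\{h,-h\}$ of elements of order $>2$ not already used; to go from $k$ to $k+1$, the idea is to split an already-chosen element $h$ (or the target) using a relation $h = h' + h''$ with $h',h''$ distinct, new, and nonzero, or to adjoin one new element and compensate. I would make this precise by showing: for $3 \le k \le |G|-3$ (and $\exp(G) \ge 3$) one can always find a valid $k$-set by starting from a valid $2$-set or $3$-set and repeatedly adding pairs $\{h,-h\}$ and at most one parity-fixing single element, exploiting that as long as fewer than $|G|-1$ elements are used there is room; and for $\exp(G)=2$ one argues directly since then $G \cong C_2^r$ and subset sums are just $\mathbb{F}_2$-linear, so a $k$-subset of $\mathbb{F}_2^r\setminus\{0\}$ with prescribed sum exists for all $k \in [1,2^r-1]$ except exactly when forced: $k=0$ is not in range, and the only genuine failures are $(k,g)$ with $g=0$ and $k \in \{1,2\}$ and (dually) $k \in \{2^r-1\} $ is fine, $k = 2^r-2$ fails for $g=0$.

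\medskip

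\noindent\textbf{Main obstacle.} The delicate part is the careful bookkeeping at the boundary — ensuring that when $k$ is close to $1$ or close to $|G|-1$ one does not run out of distinct available elements, and pinning down \emph{exactly} which $(G,k,g)$ triples fail. The $\exp(G)=2$ analysis is the crux: there everything is $\mathbb{F}_2$-linear, so I would count $k$-subsets of $\mathbb{F}_2^r \setminus \{0\}$ summing to a fixed $g$ and show this count is positive for all $k \in [1, 2^r - 1]$ except $(k,g) \in \{(1,0),(2,0),(2^r-2,0)\}$, with $(1,0)$ being vacuously outside the claim's interest (it is in range $k\in[1,|G|-1]$ but summing a single nonzero element to $0$ is impossible — wait, this must be re-examined: actually for $k=1$, $g=0$ the statement as written would also fail, so I expect the precise exceptional list in the lemma implicitly also needs $g\ne0$ or $k\ge3$; I would reconcile this by noting $|G|-2 = 2$ forces $|G|=4$, i.e. $G\cong C_2^2$, so the two exceptions $k=2$ and $k=|G|-2$ coincide there, and separately handle $k=1,g=0$). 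Getting this exceptional set to match the statement precisely, rather than proving a slightly weaker or slightly different statement, is where I would spend the most care; the generic case $3 \le k \le |G|-3$ with $\exp(G)\ge 3$ is routine once the pair-adjoining idea is set up.
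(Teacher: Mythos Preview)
Your overall approach---using inverse pairs $\{h,-h\}$ to build up sums in steps of two, a parity-fixing single element, the complementation duality $k \leftrightarrow |G|-k$, and a separate treatment of the elementary $2$-group case---is essentially the same strategy the paper uses. The paper organizes it a bit differently (first disposing of elementary $2$-groups, then for general $G$ writing $G\setminus G[2]=G_1\uplus(-G_1)$ and constructing sets of the form $g\,0^{\delta}\prod_{h\in G_0}(-h)h$ with $G_0\subset G_1$), but the underlying mechanism is the same.

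There is, however, a genuine confusion in your proposal that you flag yourself but do not resolve: you work with subsets of $G\setminus\{0\}$ throughout, and this leads you to worry that the case $k=1$, $g=0$ is a missing exception. It is not. The objects in $\sequ{0}(G)$ are squarefree sequences over all of $G$, so the zero element is allowed; for $k=1$, $g=0$ one simply takes $S=0$. Once you allow $0$ as a term, the exceptional list in the lemma is exactly right, and your doubt about whether the statement is even correct evaporates. The paper's constructions use $0$ explicitly (the factor $0^{\delta}$) precisely to handle parity, which is cleaner than your proposed splitting-and-compensating maneuver for going from $k$ to $k+1$. So the fix is simple: restore $0$ to your palette of available elements, and your argument goes through along the paper's lines.

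A secondary point: your duality $k\mapsto |G|-k$, $g\mapsto -g$ only works verbatim when $\sigma(G)=0$, i.e., when the $2$-rank is not $1$. When the $2$-rank equals $1$, the complement sends $g$ to $\sigma(G)-g$ with $\sigma(G)$ the unique involution; the paper handles this case by a separate (short) explicit construction rather than by duality, and you would need to do the same.
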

For the sake of completeness and since the later is relevant in some special case, we discuss the two remaining meaningful values of $k$, namely $k=0$ and $k=|G|$.
For $k=0$, clearly, the only element of $G$ that can be represented as a sum of $k$ elements is $0$. For $k=|G|$, the problem reduces to determining the sum of all elements of a finite abelian group: it is $0$ if the $2$-rank of $G$ is
not $1$, and the unique element of order $2$ if the $2$-rank is $1$ (see, e.g., \cite{GaoGe} for a detailed argument).
Moreover, note that the formulation of the exceptions is sharp.

\begin{proof}
We first assume that $G$ is an elementary $2$-group.
For $|G|=2$, we have $k=1$ and the claim is obvious. Thus, assume $|G|\neq 2$.
We recall that the sum of all elements of $G$ is $0$ (cf.~above).
First, we address the problem for $g=0$.
By \cite[Lemma 7.1]{GaoGe} there exists for each $\ell \in [0,|G|/2 -1]\setminus \{2\}$ some $T \in \sequ{0}(G)$ with $|T|=\ell$ and  $\sigma(T)=0$; let $T' \in \sequ{0}(G)$ the element with $\supp(T')=G \setminus \supp(T)$, then $|T'|=|G|-\ell$ and $\sigma(T')=0$ (recall that the sum of all elements of $G$ is $0$).
And, letting $T'' \in \sequ{0}(G)$ denote an element such that $\supp(T'')$ is a subgroup of index $2$ of $G$, we get $|T''| = |G|/2$ and $\sigma(T'')=0$, except in case $|G|=4$, yet this situation is covered by the exceptional case.

Now, let $g \neq 0$.  By the above reasoning, we know that there exists a $T\in \sequ{0}(G)$ with $\sigma(T)=0$ and $|T|=k-1$ except for $k=3$ and $k=|G|-1$ (we address these two cases later). Clearly, $T$ cannot contain all non-zero elements and thus we may assume $g \nmid T$, applying a suitable automorphism of $G$. Then $gT \in \sequ{0}(G)$ has the claimed property.
For $k=|G|-1$ we can take $\prod_{h \in G \setminus \{g\}} h$, which has sum $0-g= g$, and
for $k=3$ we can take $0(g+h)h$ where $h \in G \setminus \{0,g\}$.

Now, suppose $G$ is not an elementary $2$-groups.
Let $G[2]$ denote the subgroup of elements of order dividing $2$, and let $r$ denote its rank, i.e., the $2$-rank of $G$. Moreover, let $G\setminus G[2] = G_1 \uplus (-G_1)$. We set $\delta=0$ or $\delta=1$ according to $k$ even or odd, respectively.

If $k \le |G| - 2^r$ and $g \neq 0$, we consider $g0^{\delta}\prod_{h\in G_0}(-h)h$
where $G_0 \subset G_1 \setminus \{-g,g\}$---note that at most one of the elements $-g$ and $g$ is contained in $G_1$---with cardinality $\lfloor (k-1)/2 \rfloor$.
Likewise, for $k \le |G| - 2^r + 1$ and $g = 0$, we consider
$0^{1-\delta}\prod_{h\in G_0}(-h)h$ where $G_0 \subset G_1$ with cardinality $ \lfloor k/2 \rfloor $.

We continue with considering the case $g=0$. Suppose $k > |G| - 2^r + 1$ (this implies $r \ge 2$).
If $k \notin \{ |G| - 2, |G| - 2^r + 2 \}$, we consider $T \prod_{h \in G_1}(-h)h$ where $T \in \sequ{0}(G[2])$
with $|T|= k - |G| + 2^r$ and $\sigma(T)=0$, which exists by the argument for elementary $2$-groups given above, since $k - |G| + 2^r$ is neither $2$ nor $|G[2]| - 2$.
For $k \in \{|G| - 2, |G| - 2^r + 2\}$ we consider
$T \prod_{h \in G_1\setminus \{h'\}}(-h)h$ where $h'$ is arbitrary (since $G$ is not an elementary $2$-group $G_1$ is non-empty) and $T \in \sequ{0}(G[2])$ with $\sigma(T)=0$ and $|T|=4$ or $|T|=2^r$, respectively (recall that $r\ge 2$).

Now, suppose $g \neq 0$ and $k > |G| - 2^r \ge 2^r$.
Let $T=\prod_{h \in G[2]\setminus \{0\}}h$.
If $r\neq 1$, then $\sigma(T)=0$.
In this case, for $g \notin G[2]$, we consider $g0^{\delta}T\prod_{h\in G_0}(-h)h$ where
$G_0 \subset G_1 \setminus \{ -g, g \}$ with cardinality $ \lfloor (k-2^r)/2 \rfloor $.
And, for $g \in G[2]$ (note that in this case $r\neq 0$), we consider  $0^{\delta}g^{-1}T\prod_{h\in G_0}(-h)h$ where $G_0 \subset G_1$ with cardinality $\lfloor (k+2-2^r)/2 \rfloor $; note that
for $k=|G|-1$ we have that $k + 2 -2^r$ is odd and thus $ \lfloor (k+2-2^r)/2 \rfloor = (|G| - 2^r)/2$.

It remains to consider $r=1$; let $e\in G$ denote the element of order $2$.
For $g \notin G[2]$, we consider $(g+e)e0^{\delta}\prod_{h\in G_0}(-h)h$ where
$G_0 \subset G_1 \setminus \{-g-e,g+e\}$ with cardinality $ \lfloor (k-2)/2 \rfloor $ (note that for $k=|G|-1$, we have that $k-2$ is odd).
And for $g=e$, we consider $e0^{\delta}\prod_{h\in G_0}(-h)h$ where $G_0 \subset G_1$ with cardinality $ \lfloor (k-1)/2 \rfloor $.
\end{proof}

Now, we give the proof of Theorem \ref{thm_abstractbound}

\begin{proof}[Proof of Theorem \ref{thm_abstractbound}]
The special cases $H=G$ and $H=\{0\}$ are trivial; we thus exclude them from our further investigations.

Let $\pi : G \to G/H$ denote the canonical map, as well as its extension to the monoid of sequences.
Let $T \in \minsequ{k_1}^{|H|} (G/H)$ with length $\SD_{(k_1,|H|)}(G/H)$,
and let $T_2 \in \minsequ{k_2}(H)$. Moreover, let $h_0\in \supp( T_2)$ an element with maximal multiplicity.

We consider the collection of sequences $\sequ{}_T=\pi^{-1}(T)\cap \sequ{k_1}(G) \subset \sequ{}(G)$; where here $\pi: \sequ{}(G)\to \sequ{}(G/H)$ as detailed in Section \ref{sec_prel}.
As each element $\overline{g} \in G/H$ has $|H|$ preimages under $\pi$, the set
$\sequ{}_T$ is non-empty.
For each $S \in \sequ{}_T$ we have that $\sigma(S) \in H$, yet the sum of each proper and non-empty subsequence
is not an element of $H$.

Suppose there exists some $S_0 \in \sequ{}_T$ with $\sigma(S_0)=h_0$.
Then, $A=S_0(h_0^{-1}T_2)$ is a minimal zero-sum sequence with $\cm(A)= \cm(S_0)+ \max \{0,\cm(T_2)-1\}=k_1 + \max\{0,k_2-1\}$---note that the supports of $S_0$ and $T_2$ are disjoint---and $|A|= |S_0| + |T_2|-1 = |T| + |T_2|-1$, establishing the inequality with $\epsilon=0$.

It thus remains to establish the existence of such a sequence $S_0$, and a closely related sequence for the case
$\epsilon = 1$.

First, suppose $k_1>0$.
Let $S_0' \in \sequ{}_T$ and let $h_0' = \sigma(S_0')\in H$.
Let $g_0'\in \supp(S_0')$ with maximal multiplicity.
We set $g_0 = g_0' - h_0' + h_0$ and $S_0= g_0(g_0')^{-1}S_0'$.
Then $S_0 \in \sequ{}_T$ and $\sigma(S_0)=h_0$.

Now, suppose $k_1=0$.  Suppose there exists some $\overline{g} \in G/H$ such
that $0 < v=\mathsf{v}_{\overline{g}}(T)< |H|$.
Let $T_0=\overline{g}^{-v}T$ and $S_0' \in \pi^{-1}(T_0)\cap \sequ{0}(G)$; moreover, let $g \in \pi^{-1}(\overline{g})$.
We set $h_0'= vg + \sigma(S_0')$.

We assert that we may assume that there exists some $F \in \sequ{0}(H)$ with $|F|=v$ and $\sigma(F)= h_0-h_0'$.
For $H$ not an elementary $2$-group this is immediate by Lemma \ref{lr_lem_sum}.
For $H$ an elementary $2$-group, possibly choosing a different sequence $T_2$, we may assume that $h_0$ is an arbitrary non-zero element of $H$, and it thus suffices that $\sigma(F)+h_0' \neq 0$, which
of course can be achieved.
Let $F$ be such a sequence and set $S_0=(g+ F)S_0'$. Then $\cm(S_0)= 0$ and $\sigma(S_0)= h_0$, and the claim follows as discussed above.

It remains to consider the case that $k_1= 0$ and $\mathsf{v}_{\overline{g}}(T) \in \{0, |H|\}$
for each $\overline{g} \in G/H$, in particular $|H| \mid \SD_{(0,|H|)}(G/H)$.
Since $\supp(T)$ can not be a group, there exist, possibly equal, $g_1, g_2 \in \supp(T)$ such that $g_1 + g_2 \notin \supp(T)$.
We set $T' = (g_1+g_2)g_1^{-1}g_2^{-1}T$; for the case $g_1=g_2$, recall that $\mathsf{v}_{g_i}(T)=|H| \ge 2$.
We have that $T' \in \minsequ{0}^{|H|} (G/H)$ and there exists some $\overline{g} \in G/H$ such
that $0 < v=\mathsf{v}_{\overline{g}}(T')< |H|$.

We can now apply the same argument with $T'$ instead of $T$ to get the lower bound with $\epsilon= 1$---note that $|T'|= |T|-1$---except in the case $|T'|=1$, since (the analog) of $S_0$ and $T_2$ might not have disjoint support.
Note that in this exceptional case, we also have $|H|=2$, and thus $|G|= 4$. As in this case,
the right-hand side is at most $2= \Dav(G/H)+\Dav(H)-2$, while the left-hand side is at least $\SD_0(G)\ge 2$,
the assertion is also true in this exceptional case.
\end{proof}

We point out some special cases contained in this result (for a special case of the first assertion see \cite{baginski}, and the third assertion was obtained by a different argument by Baginski \cite{baliski}).
\begin{co}
Let $G$ be a finite abelian group and $H$ a subgroup.
\begin{enumerate}
\item $\Ol(G)\ge \Ol(G/H) + \Ol(H) - 1$.
\item $\SD(G)\ge \SD(G/H) + \Ol(H) - 2 \ge \SD(G/H) + \SD(H) - 2$.
\item If $|G|\ge 3$ and $H$ is a proper subgroup, then $\SD(G) > \SD(H)$.
\end{enumerate}
\end{co}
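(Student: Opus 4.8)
The plan is to read all three items off Theorem~\ref{thm_abstractbound} (and, for the last one, off the construction behind it), using throughout that $\Ol(G)=\SD_1(G)$, $\SD(G)=\SD_0(G)$, $\SD_{(k,1)}(G)=\SD_k(G)$, and the monotonicity $\SD_{(k,\ell')}(G)\le\SD_{(k,\ell)}(G)$ for $\ell'\le\ell$ established above.

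For~(1) I would apply Theorem~\ref{thm_abstractbound} with $k_1=k_2=1$. Then $k_1+\max\{0,k_2-1\}=1$ and $\epsilon=0$ (since $k_1\neq 0$), so $\SD_1(G)\ge\SD_{(1,|H|)}(G/H)+\SD_1(H)-1\ge\SD_1(G/H)+\SD_1(H)-1$, which is $\Ol(G)\ge\Ol(G/H)+\Ol(H)-1$. For~(2) I would apply it with $k_1=0$, $k_2=1$; now $k_1+\max\{0,k_2-1\}=0$ and $\epsilon\le 1$, so $\SD_0(G)\ge\SD_{(0,|H|)}(G/H)+\SD_1(H)-2\ge\SD_0(G/H)+\SD_1(H)-2$, that is $\SD(G)\ge\SD(G/H)+\Ol(H)-2$; the last inequality in~(2) is then just $\Ol(H)=\SD_1(H)\ge\SD_0(H)=\SD(H)$ from Lemma~\ref{lr_lem_basic0}(3).

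Part~(3) is not a formal consequence of~(2) (when $\SD(G/H)\le 2$ the bound $\SD(G)\ge\SD(G/H)+\SD(H)-2$ is too weak), so I would prove it directly, by the splitting idea underlying the $\epsilon=0$ case in the proof of Theorem~\ref{thm_abstractbound}. Set $m=\SD(H)$ and pick $A=a_1\cdots a_m\in\minsequ{0}(H)$ of maximal length. If $m\ge 2$, the terms $a_i$ are pairwise distinct and nonzero; choose any $g\in G\setminus H$ (nonempty, as $H$ is proper) and, since at most one $a_i$ can equal $-2g$, relabel so that $a_1\neq -2g$. Replace the term $a_1$ by the two terms $a_1+g$ and $-g$, obtaining $A'=(a_1+g)(-g)a_2\cdots a_m$. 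Then $A'$ is squarefree (the two new terms lie in the nontrivial cosets $g+H$ and $-g+H$, hence differ from the $a_i\in H$ and, because $a_1\neq-2g$, from each other), $\sigma(A')=\sigma(A)=0$, and $A'$ is a minimal zero-sum sequence: for a proper non-empty $B\mid A'$ one checks $\sigma(B)\neq 0$ by distinguishing whether $B$ contains neither, exactly one, or both of $a_1+g$ and $-g$ — in the first and last case the relevant subsequence of $A$ is proper and non-empty so minimality of $A$ applies, and in the middle case $\sigma(B)$ lies in a nontrivial coset of $H$. Hence $\SD(G)\ge|A'|=m+1>\SD(H)$. If $m=1$, then $H\in\{\{0\},C_2\}$; if $G$ has an element $g$ of order at least $3$ then $g(-g)\in\minsequ{0}(G)$, and otherwise $G$ is an elementary $2$-group which, as $|G|\ge 3$ and $H\subsetneq G$, has rank at least $2$, so $g_1g_2(g_1+g_2)\in\minsequ{0}(G)$ for two distinct nonzero $g_1,g_2$; either way $\SD(G)\ge 2>1=\SD(H)$.

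The only genuine obstacle is the realization that~(3) must be argued separately from~(1)--(2), and that the right tool is the explicit splitting of a single term of an extremal minimal zero-sum sequence (together with the harmless special treatment of the two groups $H$ with $\SD(H)=1$); items~(1) and~(2) are pure bookkeeping on top of Theorem~\ref{thm_abstractbound}.
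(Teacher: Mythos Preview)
Your treatment of parts~(1) and~(2) is identical to the paper's: both apply Theorem~\ref{thm_abstractbound} with $(k_1,k_2)=(1,1)$ and $(k_1,k_2)=(0,1)$ respectively, followed by the monotonicity $\SD_{(k,\ell')}\le\SD_{(k,\ell)}$.

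For part~(3) your route is genuinely different and more elementary. The paper stays within the framework of Theorem~\ref{thm_abstractbound}: after reducing to $|H|\ge 3$ (since $\SD(G)\ge 2$ for $|G|\ge 3$), it argues that $\SD_{(0,|H|)}(G/H)\ge 3$ whenever $|G/H|\ge 3$, which gives the claim directly; and when $|G/H|=2$ one has $\SD_{(0,|H|)}(G/H)=2$ with $|H|\nmid 2$, so $\epsilon=0$ in the theorem and the claim again follows. You instead give an explicit one-step splitting of a single term $a_1\mapsto (a_1+g)(-g)$ in an extremal element of $\minsequ{0}(H)$, together with a direct check of minimality and squarefreeness, and a separate treatment of $\SD(H)=1$. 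Your argument is self-contained and avoids invoking the general theorem or estimating $\SD_{(0,|H|)}(G/H)$; the paper's argument, by contrast, illustrates that even this boundary case fits into the same machinery. Both are correct; yours is closer in spirit to Baginski's original direct proof that the paper cites.
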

\begin{proof}
The first two assertions are clear by Theorem \ref{thm_abstractbound} with $k_1=k_2$ equal to $1$,
and with $k_1=0$ and $k_2=1$, respectively.

It remains to prove the final assertion.
Since $|G|\ge 3$, implies that $\SD(G)\ge 2$, we can assume that
$|H|\ge 3$ (as $\SD(H)=1$ for $|H|\le 2$).
If $\SD_{(0,|H|)}(G/H)\ge 3$, the claim is immediate by Theorem \ref{thm_abstractbound},
and if $\SD_{(0,|H|)}(G/H)=2$, the claim also follows by \ref{thm_abstractbound} in view of
$|H|\nmid \SD_{(0,|H|)}(G/H)$. Since $|H|\ge 3$, we have
$\SD_{(0,|H|)}(G/H)\ge 3$ for $|G/H|\ge 3$, it remains
to consider $|G/H|=2$. Yet, in this case, $\SD_{(0,|H|)}(G/H)=2$, and
as again $|H|\nmid \SD_{(0,|H|)}(G/H)$, the claim follows.
\end{proof}
Applying Theorem \ref{thm_abstractbound} with $k_1= k_2 = \infty$
we can also obtain the classical estimate $\Dav(G)\ge \Dav(G/H)+ \Dav(H)-1$.
We frequently make use of the following special case of Theorem \ref{thm_abstractbound}.

\begin{co}
\label{lr_lem_maintech}
Let $H$ be a finite abelian group, $k\in \mathbb{N}_0 \cup \{ \infty \}$, and $n \in \mathbb{N}\setminus \{1\}$.
Then, for each $m\in [1,n]$ we have,
\[\SD_{k+\delta}(H\oplus C_n) \ge \SD_{k+1}(H)+m-1\]
where $\delta= \max \{0, m - |H| +1\}$.
In particular, $\SD_k(C_n\oplus C_n)\ge \SD_{k+1}(C_n)+n-2$.
\end{co}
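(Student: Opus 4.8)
The plan is to derive the inequality directly from Theorem~\ref{thm_abstractbound}. I would apply that theorem to the group $H\oplus C_n$ and its subgroup $H\oplus\{0\}\cong H$, whose quotient is isomorphic to $C_n$, with the choice of parameters $k_1=\delta$ and $k_2=k+1$. Since then $k_1+\max\{0,k_2-1\}=\delta+k$, the theorem yields
\[
\SD_{k+\delta}(H\oplus C_n)\ \ge\ \SD_{(\delta,|H|)}(C_n)+\SD_{k+1}(H)-1-\epsilon,
\]
where $\epsilon\in\{0,1\}$ and $\epsilon=1$ only when $\delta=0$ and $|H|\mid\SD_{(0,|H|)}(C_n)$. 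Hence it suffices to show that $\SD_{(\delta,|H|)}(C_n)-\epsilon\ge m$ for every $m\in[1,n]$.

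I would obtain the needed long minimal zero-sum sequence over $C_n$ from the standard construction: fixing a generator $e$ of $C_n$, put $S_0=e^{m-1}\cdot\bigl((n-m+1)e\bigr)$. Using $1\le m\le n$, one checks that $S_0$ is a minimal zero-sum sequence of length $m$---up to the degenerate cases $m=1$ (where $S_0=0^{1}$) and $m=n$ (where the two displayed factors coincide and $S_0=e^{n}$), its proper nonempty subsequences are $e^{j}$ with $1\le j\le m-1$, of sum $je\ne 0$, and $e^{j}((n-m+1)e)$ with $0\le j\le m-2$, of sum $(j+n-m+1)e\ne 0$. A short computation gives $\cm_{|H|}(S_0)\le\max\{0,m-|H|\}\le\delta$, so $S_0$ belongs to the family defining $\SD_{(\delta,|H|)}(C_n)$ and hence $\SD_{(\delta,|H|)}(C_n)\ge m$. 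This already settles every case with $\epsilon=0$.

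It remains to handle $\epsilon=1$. In that case $\delta=0$, which forces $m\le|H|-1$, and $|H|$ divides $\SD_{(0,|H|)}(C_n)$; since the latter is a positive integer it is therefore at least $|H|\ge m+1$, so that $\SD_{(\delta,|H|)}(C_n)-\epsilon=\SD_{(0,|H|)}(C_n)-1\ge m$, as required. This completes the proof of the main inequality, and the ``in particular'' statement follows by taking $H=C_n$ and $m=n-1$, which lies in $[1,n]$ since $n\ge 2$: then $\delta=0$, and the inequality reads $\SD_k(C_n\oplus C_n)\ge\SD_{k+1}(C_n)+n-2$.

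I expect the only delicate point---more a matter of bookkeeping than a genuine obstacle---to be the treatment of the correction term $\epsilon$ in Theorem~\ref{thm_abstractbound}: one must observe that exactly in the situation where $\epsilon=1$ is unavoidable, namely $\delta=0$ (equivalently $m<|H|$) together with $|H|\mid\SD_{(0,|H|)}(C_n)$, the divisibility forces $\SD_{(0,|H|)}(C_n)$ to be large enough ($\ge|H|>m$) that the loss of one unit is harmless; this is precisely what occurs for the ``in particular'' assertion. The remaining ingredients---the verification that $S_0$ is a minimal zero-sum sequence, the estimate $\cm_{|H|}(S_0)\le\max\{0,m-|H|\}$, and the (trivial) validity of Theorem~\ref{thm_abstractbound} for the subgroup $H\oplus\{0\}$, including the degenerate case $|H|=1$---are routine.
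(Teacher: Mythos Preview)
Your proof is correct and follows essentially the same route as the paper's: apply Theorem~\ref{thm_abstractbound} with the subgroup $H$ of $H\oplus C_n$ and parameters $k_1=\delta$, $k_2=k+1$, then exhibit $S_0=e^{m-1}((n-m+1)e)$ to bound $\SD_{(\delta,|H|)}(C_n)$ from below, and handle the $\epsilon=1$ case via the divisibility $|H|\mid\SD_{(0,|H|)}(C_n)$ together with $m\le|H|-1$. The paper's argument is the same in all essential points; your write-up is slightly more explicit in verifying that $S_0$ is indeed a minimal zero-sum sequence and in tracking the degenerate endpoints $m=1$, $m=n$.
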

\begin{proof}
We apply Theorem \ref{thm_abstractbound} with $k_2= k+1$ and $k_1 = \delta $ to get
$\SD_{k+\delta}(H\oplus C_n) \ge \SD_{(\delta, |H|)}(C_n) + \SD_{k+1}(H)-1-\epsilon$, where $\epsilon$ is as defined there.

If $\epsilon=1$, we have $\delta=0$ and  $|H|\mid \SD_{(\delta,|H|)}(C_n)$. So,
 $\SD_{(\delta,|H|)}(C_n)-2 \ge |H|-2\ge m-1$, the last inequality by the fact $m-|H|+1\le \delta\le 0$.

We thus may assume $\epsilon=0$. It thus suffices to show that $\SD_{(\delta,|H|)}(C_n)\ge m$.
For $e$ a generating element of $C_n$, the sequence
$S=e^{m-1}((n-m+1)e)$ is a minimal zero-sum sequence and $\cm_{|H|}(S) \le \max \{0,m- |H|\}\le \max \{0,m- |H|+1\}$, implying that $\SD_{(\delta, |H|)}(C_n)\ge m$.

The additional claim follows applying the result with $m=n-1$.
\end{proof}

We end this section with some discussion of Corollary \ref{lr_lem_maintech}.
First, we mainly apply this result in situations where for $m=n$ we still have $\delta=0$; thus,
we also disregard the fact that the result can be improved for $m<n$. Moreover, in Section \ref{sec_refined} we obtain similar results, where imposing stronger assumptions better results can be obtained.
Second, in the special case that the $2$-rank of $G$ is $1$ and there exists an extremal sequence with respect to $\SD_k(G)$ containing the element of order $2$ more than once, we can replace $\max \{0, n - |G|+1\}$ by $\max \{0, n - |G|\}$; the reason is apparent from the proof and cf.~the remark after Lemma \ref{lr_lem_sum}.
Of course, this situation is extremely special and in fact only occurs if
$|G|=2$.
Thus, we disregard this slight improvement for our general considerations.  Yet, we still mention it, since this phenomenon is responsible for the special role of elementary $2$-groups (cf.~Section \ref{sec_small}).

Finally, in the case $C_n \oplus C_n$, we note that for $n$ and $k>0$, we can get another lower bound, which can be better, for
$\SD_{k}(C_n^2)$, namely
\[
\SD_{k}(C_n^2)\ge \SD_{k}(C_n)+n-1
\]
that is obtained by applying the above result with $m=n$---note that in this case $\delta=1$---and $k-1$.
However, in case $\SD_{k+1}(C_n)> \SD_k(C_n)$, the inequalities coincide and yield
two distinct ways to construct a zero-sum sequence in $\minsequ{k}(C_n^2)$ of the same length.
Indeed, it follows from the work of Bhowmik and Schlage-Puchta \cite{bhowmik}, as well as of Nguyen and Vu \cite{nguvu2}, for the Olson constant, i.e., $k=1$, that for $n$ a large prime (at least $6000$ suffices) one obtains all elements of maximal length of $\minsequ{1}(C_n^2)$ using these two constructions (or only the latter one, if $\SD_{k+1}(C_n) = \SD_k(C_n)$), and elements of maximal length
from $\minsequ{2}(C_n)$ and $\minsequ{1}(C_n)$, respectively.
More explicitly, for $n>6000$ prime and $A \in \minsequ{1}(C_n^2)$ of maximum length, i.e., length $\SD_1(C_n^2) = \Ol(C_n^2)$, 
there exists an independent
generating set $\{ e_1, e_2\}$
such that $A= (e_1 + S)e_2^{-1}T$ where $S \in \sequ{1}( \langle e_2 \rangle )$
with $|S|=n$ and $\sigma(S)=e_2$, and
$T\in \minsequ{1}(\langle e_2 \rangle)$ with $e_2\mid T$ (an element of maximal multiplicity) and $|T|=\SD_1(C_n)$,
or $A=(2e_1+ae_2)(e_1+S)e_2^{-1}T$ where
$S \in \sequ{0}( \langle e_2 \rangle)$
with $|S|=n-2$ and $\sigma(S)=(1-a)e_2$, and
$T\in \minsequ{2}(\langle e_2 \rangle)$ with $e_2\mid T$ (an element of maximal multiplicity) and $|T|=\SD_2(C_n)$;
where the latter case only occurs if $\SD_2(C_n)> \SD_1(C_n)$.

\section{Basic results for groups of large rank}
\label{sec_basic}

In this section, we discuss how Corollary \ref{lr_lem_maintech} can
be used to obtain good, and in certain cases optimal, lower bounds
for $\Ol(G)$ and $\SD(G)$ for groups with large rank (in a relative sense).

Informally and restricted to the case of sets, the basic idea and novelty of this construction, which is encoded in this result, is
to use minimal zero-sum sequences---not sets---and zero-sumfree sequences over subgroups of $G$ to construct
minimal zero-sum sets and zero-sumfree sets over $G$ (the idea to extend `sets to sets', and also `sequences to sequences', are both frequently used, e.g., in \cite{ruzsa} to obtain the lower bound mention in Section~\ref{sec_intro}).

Our goal in this section is to show that, using this idea, lower bounds, which seem to be fairly good, can be obtained in a direct way.
In Section \ref{sec_refined}, we discuss other, more involved, variants of this approach that in certain cases yield slightly, though perhaps significantly, better bounds.

In the following result, we give one type of lower bound that can be derived using this method.

\begin{thm}
\label{lr_thm_maintech}
Let $G = \oplus_{i=1}^r C_{n_i}$ where $r\in \mathbb{N}\setminus \{1\}$ and $1 < n_1 \le \dots \le n_r$. Let $k \in \mathbb{N}_0$.
If $t \in [2,r]$ such that $n_{s} < \prod_{i=1}^{s-1} n_i$ for each $s \in [t+1,r]$, then the following assertions hold.
\begin{enumerate}
\item If $n_{t-1}\neq n_t$, then
\[\SD_k(\oplus_{i=1}^{r}C_{n_i}) \ge \sum_{i\in [1,r]\setminus \{t\}} (n_i -1)+ \SD_{k+r-1}( C_{n_t}).\]
\item If $k+r >2$, then
\[\SD_k(\oplus_{i=1}^{r}C_{n_i}) \ge \sum_{i\in [1,r]\setminus \{t\}} (n_i -1)+ \SD_{k+r-2}( C_{n_t}).\]
\end{enumerate}
\end{thm}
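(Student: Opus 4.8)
The plan is to apply Corollary~\ref{lr_lem_maintech} iteratively, peeling off one cyclic factor at a time, and to arrange the order of peeling so that the penalty term $\delta = \max\{0, m - |H| + 1\}$ vanishes at every step except possibly one. Concretely, write $G = H \oplus C_{n_r}$ with $H = \oplus_{i=1}^{r-1} C_{n_i}$ and choose $m = n_r$. By hypothesis, for $s = r$ (assuming $r \ge t+1$) we have $n_r < \prod_{i=1}^{r-1} n_i = |H|$, so $\delta = \max\{0, n_r - |H| + 1\} = 0$ and the corollary gives $\SD_k(G) \ge \SD_{k+1}(H) + n_r - 1$. Repeat this with $H$ in place of $G$, splitting off $C_{n_{r-1}}$, again with $\delta = 0$ as long as the relevant index exceeds $t$; each such step contributes a summand $n_i - 1$ and raises the index on $\SD$ by one. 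After $r - t$ such steps we have reduced to $\SD_{k + (r-t)}(\oplus_{i=1}^{t} C_{n_i}) + \sum_{i=t+1}^{r}(n_i - 1)$.

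Now I need to peel off the remaining factors $C_{n_t}$ down to $C_{n_1}$ but keeping $C_{n_t}$ as the surviving cyclic group. So at this stage I reorganize: write $\oplus_{i=1}^{t} C_{n_i} = H' \oplus C_{n_t}$ where $H' = \oplus_{i=1}^{t-1} C_{n_i}$, apply the corollary with $m = |H'| = \prod_{i=1}^{t-1} n_i$. Since $m = |H'|$, we get $\delta = \max\{0, |H'| - |H'| + 1\} = 1$. This is the single unavoidable penalty step. It yields $\SD_{k+(r-t)}(\oplus_{i=1}^{t} C_{n_i}) \ge \SD_{k + (r-t) + 1 - 1}(H') + (|H'| - 1) \cdot (\text{no, wait})$ — here I must be careful: the corollary gives $\SD_{k' + \delta}(H' \oplus C_{n_t}) \ge \SD_{k'+1}(H') + m - 1$, so with $k' + \delta = k + (r-t)$, i.e.\ $k' = k + r - t - 1$, and $m - 1 = |H'| - 1 = \prod_{i=1}^{t-1} n_i - 1$. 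Hmm, that does not immediately give the clean $\sum (n_i - 1)$ form. So instead, for the factors below $t$ I should keep $C_{n_t}$ as the base group $H$ in the corollary and split off the small factors one at a time: write $C_{n_t} \oplus C_{n_j}$-type groups is wrong since $n_t$ need not divide things — but the corollary only needs the group to be $H \oplus C_n$ with $H$ arbitrary abelian. So take $H = C_{n_t} \oplus (\text{already-peeled smaller factors})$? The cleanest route: apply the corollary with the cyclic factor being $C_{n_j}$ for $j = t-1, t-2, \dots, 1$ in turn, and $H$ the remaining group (which always contains $C_{n_t}$, hence $|H| \ge n_t \ge n_j$, so $m = n_j \le |H|$ gives $\delta = 0$). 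Each such step contributes $n_j - 1$ and raises the $\SD$ index by one. Doing all $t-1$ of these steps, starting from whatever index we had, accumulates $\sum_{i=1}^{t-1}(n_i - 1)$.

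Tallying: the $r - t$ "upper" steps contribute $\sum_{i=t+1}^r (n_i-1)$ and raise the index by $r-t$; the $t-1$ "lower" steps contribute $\sum_{i=1}^{t-1}(n_i-1)$ and raise the index by $t-1$; total index raise is $r - 1$, total contribution is $\sum_{i \in [1,r]\setminus\{t\}}(n_i - 1)$, leaving $\SD_{k + r - 1}(C_{n_t})$, which is exactly assertion (1). The hypothesis $n_{t-1} \ne n_t$ in (1) is what is needed to legitimately single out the index $t$ as the surviving one while the lower factor $C_{n_{t-1}}$ satisfies $n_{t-1} \le n_t$ (so $\delta = 0$) and is genuinely smaller — without it the "lower" step splitting off $C_{n_{t-1}}$ from a group whose largest invariant equals $n_t$ would force $m = n_{t-1} = n_t = |{\rm largest\ factor}|$ and potentially $\delta = 1$. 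For assertion (2), I simply replace the final lower step (splitting off $C_{n_1}$, or whichever is most convenient) by an application of the $\delta = 1$ version of the corollary, i.e.\ use $m = n_t$ so $\delta = 1$, which drops one unit of the index-raising budget: we land on $\SD_{k+r-2}(C_{n_t})$ instead. The condition $k + r > 2$ guarantees the index $k + r - 2$ is a legitimate non-negative integer and that the corollary's hypotheses (it needs the relevant $k$-parameter in $\mathbb{N}_0 \cup \{\infty\}$, and the base group nontrivial) are met; this is why (2) carries that extra hypothesis while (1) does not.

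The main obstacle is bookkeeping rather than any genuine difficulty: one must verify at every single peeling step that the group being split as $H \oplus C_{n}$ really does have $|H| \ge n$ (so $\delta = 0$), which hinges precisely on the telescoping hypothesis $n_s < \prod_{i=1}^{s-1} n_i$ for $s > t$ together with the monotonicity $n_1 \le \dots \le n_r$ and, for (1), the strict inequality $n_{t-1} \ne n_t$. I would organize the induction carefully, perhaps proving a cleaner auxiliary statement by induction on $r$ that tracks both the accumulated sum and the current index on $\SD$, so that assertions (1) and (2) fall out by specializing the last step. I would also double-check the edge case $t = r$ (no "upper" steps at all) and the case where $H$ becomes trivial is avoided because $C_{n_t}$ is always retained.
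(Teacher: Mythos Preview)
Your approach is correct and essentially the same as the paper's: both argue by iterating Corollary~\ref{lr_lem_maintech}, first stripping off the large factors $C_{n_r},\ldots,C_{n_{t+1}}$ (where the hypothesis $n_s<\prod_{i<s}n_i$ forces $\delta=0$), then the small factors $C_{n_1},\ldots,C_{n_{t-1}}$, with a single $\delta=1$ step absorbed in case~(2) at the cost of one unit in the index. The paper merely packages these two phases as separate auxiliary results (Propositions~\ref{lr_prop_large} and~\ref{lr_prop_small}) and peels the small factors in increasing order rather than your decreasing order, but this is cosmetic; your reasoning about where and why the condition $k+r>2$ enters (namely to keep the Corollary's $k$-parameter nonnegative at the unique $\delta=1$ step) is exactly right.
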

We point out that although in this result conditions are imposed these are mild assumptions.
In particular, note that the choice $t=r$ is always admissible.
Thus, except for $G= C_n \oplus C_n$ and $k=0$---for this case see the remarks after Corollary \ref{lr_lem_maintech}---this result can actually be applied.

We break up the proof of this result into several partial results. On the one hand, we do so for the clarity of the exposition. Yet, on the other hand, these partial results are of some independent interest, since they can be applied and combined in different ways.

As already mentioned, the basic idea is to apply repeatedly Corollary \ref{lr_lem_maintech}.
Obviously, we want to do this in such a way that the lower bound for $\SD_k(G)$ that we obtain at the end is
as large as possible.
Yet, the optimal strategy is not always obvious.
On the one hand, we want to keep the $\delta$s small, in the best case equal to $0$, which suggests to `split off' small cyclic components.
On the other hand, it is better to `split off' large cyclic components, since
$\SD_{\ell}(C_m)$ for fixed $\ell$ is closer to $\Dav(C_m)$ if $m$ is small, and it is thus better if the cyclic group that finally remains is as small as possible.

The former strategy is formalized in Proposition \ref{lr_prop_small}. And, the later in Proposition \ref{lr_prop_large}, restricted to groups where it is well applicable. Theorem \ref{lr_thm_maintech} is a combination of these two basic strategies with one parameter $t$ to balance them.
As we see in Section \ref{sec_refined} additional refinements are possible.

\begin{pro}
\label{lr_prop_small}
Let $r\in \mathbb{N}\setminus \{1\}$ and $n_1, \dots , n_r \in \mathbb{N}$ with $1< n_1 \le \dots \le n_r$.
Then, for each $s\in [0, r-2]$ and in case $n_{r-1}\neq n_r$ in addition for $s=r-1$,
\[\SD_k(\oplus_{i=1}^{r}C_{n_i}) \ge \sum_{i=1}^s (n_i -1)+ \SD_{k+s}(\oplus_{i=s +1}^r C_{n_i}).\]
Moreover, if $k+r >2$, then at least
\[\SD_k(\oplus_{i=1}^{r}C_{n_i}) \ge \sum_{i=1}^{r-1} (n_i -1)+ \SD_{k+r-2}( C_{n_r}).\]
\end{pro}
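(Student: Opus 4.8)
The plan is to derive both inequalities by repeated application of Corollary \ref{lr_lem_maintech}, choosing at each step the cyclic component to split off so that the associated $\delta$ vanishes; the only place where a nonzero $\delta$ is unavoidable is the very last step, and that is exactly why the second (stronger) statement requires the hypothesis $k+r>2$.

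For the first assertion, I would argue by induction on $r$. Write $H = \oplus_{i=1}^{r-1} C_{n_i}$, so that $G = H \oplus C_{n_r}$ and $|H| = \prod_{i=1}^{r-1} n_i \ge n_1 \cdot 2^{r-2} \ge n_1 \ge \dots$; in any case $|H| \ge n_r$ is \emph{not} what we want — rather we want $|H|$ large enough that splitting off the \emph{bottom} component $C_{n_1}$ causes no loss. So more precisely: to prove the inequality for a given $s \in [0,r-2]$, apply Corollary \ref{lr_lem_maintech} with the ambient group $(\oplus_{i=2}^{r} C_{n_i}) \oplus C_{n_1}$, i.e.\ peel off $C_{n_1}$ from $\oplus_{i=1}^r C_{n_i}$. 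Here the role of ``$H$'' in that corollary is played by $\oplus_{i=2}^r C_{n_i}$, which has order at least $n_r \ge n_1$, so with $m = n_1$ we get $\delta = \max\{0, n_1 - |\oplus_{i=2}^r C_{n_i}| + 1\} = 0$ (using $r \ge 2$, hence $\oplus_{i=2}^r C_{n_i}$ is nontrivial and has order $\ge n_1$). This yields
\[
\SD_k\Bigl(\oplus_{i=1}^{r}C_{n_i}\Bigr) \ge \SD_{k+1}\Bigl(\oplus_{i=2}^{r}C_{n_i}\Bigr) + n_1 - 1.
\]
Iterating this $s$ times (peeling off $C_{n_1}, C_{n_2}, \dots, C_{n_s}$ in turn, each time the remaining direct sum still being nontrivial since $s \le r-2 < r$, and its order being at least $n_r$ which dominates the component being removed) gives exactly the claimed bound $\sum_{i=1}^s(n_i-1) + \SD_{k+s}(\oplus_{i=s+1}^r C_{n_i})$. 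For the extension to $s = r-1$ under the hypothesis $n_{r-1} \ne n_r$: after peeling off the first $r-2$ components we are left with $\SD_{k+r-2}(C_{n_{r-1}} \oplus C_{n_r}) + \sum_{i=1}^{r-2}(n_i-1)$, and we must peel $C_{n_{r-1}}$ off $C_{n_{r-1}} \oplus C_{n_r}$ with $m = n_{r-1}$ and ``$H$'' $= C_{n_r}$, so $\delta = \max\{0, n_{r-1} - n_r + 1\} = 0$ precisely because $n_{r-1} < n_r$; this is the one spot where the inequality $n_{r-1} \ne n_r$ is used.

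For the second (stronger) assertion, I would peel off the first $r-2$ components exactly as above to reach $\SD_k(G) \ge \sum_{i=1}^{r-2}(n_i-1) + \SD_{k+r-2}(C_{n_{r-1}} \oplus C_{n_r})$, and then apply Corollary \ref{lr_lem_maintech} one final time to $C_{n_r} \oplus C_{n_{r-1}}$ — note the order of the summands: now the ``$H$'' is $C_{n_{r-1}}$ and we split off $C_{n_r}$ with $m = n_{r-1} \le n_r$ — wait, one must be careful: Corollary \ref{lr_lem_maintech} splits off a $C_n$ and requires $m \in [1,n]$. Taking $n = n_{r-1}$, ``$H$'' $= C_{n_r}$, and $m = n_{r-1}$ gives $\delta = 0$ but reduces to $\SD_{k+r-2}(C_{n_r}) + n_{r-1} - 1$, which is the wrong shape. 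Instead take $n = n_r$, ``$H$'' $= C_{n_{r-1}}$, $m = n_{r-1}$: then $m \le n = n_r$ is fine, $\delta = \max\{0, n_{r-1} - n_{r-1} + 1\} = 1$, and we obtain $\SD_{k+r-2}(C_{n_r} \oplus C_{n_{r-1}}) \ge \SD_{(k+r-3)+1}(C_{n_{r-1}}) + n_{r-1} - 1$; this is not directly it either. The clean route is: apply Corollary \ref{lr_lem_maintech} with ``$H$'' $= C_{n_r}$, $n = n_{r-1}$, and $m = n_{r-1}$, writing the original $k$ of that corollary as $k+r-3$ so that $k+\delta$ there equals $k+r-3+1 = k+r-2$ (valid since $\delta=0$ forces this to be an identity of indices), giving $\SD_{k+r-2}(C_{n_{r-1}} \oplus C_{n_r}) \ge \SD_{k+r-2}(C_{n_r}) + n_{r-1} - 1 \ge \SD_{k+r-2}(C_{n_r})$ — still the wrong component survives. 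The genuinely correct final move is the ``additional claim'' variant: apply the \emph{last} displayed special case of the remark following Corollary \ref{lr_lem_maintech}, namely $\SD_k(C_m^2) \ge \SD_k(C_m) + m - 1$ when $m = n_{r-1} = n_r$ — but here $n_{r-1}$ need not equal $n_r$. So in fact the honest argument is: peel off $C_{n_1},\dots,C_{n_{r-1}}$ (the first $r-1$ of them, all with $\delta = 0$ as long as each remaining summand dominates) down to $\SD_{k+r-1}(C_{n_r})$, giving $\SD_k(G) \ge \sum_{i=1}^{r-1}(n_i-1) + \SD_{k+r-1}(C_{n_r})$; but the target has $\SD_{k+r-2}$, so this is \emph{stronger}, hence a fortiori implies the claim via Lemma \ref{lr_lem_basic}(3).

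The main obstacle, and the reason for the hypothesis $k+r>2$, is the following subtlety: peeling off $C_{n_{r-1}}$ from $C_{n_{r-1}} \oplus C_{n_r}$ with $m = n_{r-1} = n_r$ (the homocyclic boundary case) forces $\delta = 1$, which is harmless only if the index we feed in, $k+r-3$, is a genuine nonnegative integer — i.e.\ $k + r - 2 \ge 1$, that is $k+r > 2$; when $k+r \le 2$ (which given $r \ge 2$ means $k = 0, r = 2, n_1 = n_2$, the excluded $C_n \oplus C_n$ case) the construction degenerates. Concretely I expect the write-up to reduce to: (i) a clean induction peeling off smallest components with $\delta = 0$, justified by $|\oplus_{i=s+1}^r C_{n_i}| \ge n_{s+1} \ge n_s$ so $m - |H| + 1 \le 0$; (ii) the observation that at the $s = r-1$ stage one needs $n_{r-1} < n_r$ to keep $\delta = 0$ on the last peel; and (iii) for the moreover-clause, peeling all the way to $\SD_{k+r-1}(C_{n_r})$ and invoking monotonicity $\SD_{k+r-1}(C_{n_r}) \ge \SD_{k+r-2}(C_{n_r})$, with the $k+r>2$ condition ensuring every intermediate index stays in $\mathbb{N}_0$ and every $\delta$ along the way is $0$. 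The bookkeeping of which index to increment at each peel, and checking $m \le n$ and $m \le |H|-1$ at each step, is the routine-but-delicate part; the conceptual content is just the two peeling strategies.
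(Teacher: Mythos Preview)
Your approach for the first assertion matches the paper's: peel off the smallest cyclic summand at each step via Corollary~\ref{lr_lem_maintech} with $\delta = 0$. One small correction: for $\delta = 0$ you need the \emph{strict} inequality $n_j < |\oplus_{i=j+1}^{r} C_{n_i}|$, not just $\le$; this holds because at stages $j \le r-2$ the remaining sum has at least two factors, so its order is at least $n_{j+1} n_{j+2} \ge 2 n_j > n_j$. Your treatment of the $s = r-1$ extension under $n_{r-1} < n_r$ is correct.

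The moreover-clause argument, however, has a genuine gap. Your final summary (iii) claims one can peel all the way down to $\SD_{k+r-1}(C_{n_r})$ with every $\delta = 0$, and then invoke monotonicity $\SD_{k+r-1} \ge \SD_{k+r-2}$. This fails precisely when $n_{r-1} = n_r$, which is the only case where the moreover-clause says anything beyond the first assertion: at the last peel, with $H = C_{n_r}$ and $m = n = n_{r-1} = n_r$, one has $\delta = \max\{0, n_{r-1} - n_r + 1\} = 1$, not $0$. The hypothesis $k + r > 2$ does not force $\delta$ to vanish; its role is to make the $\delta = 1$ application legal. You actually wrote down the correct step midway through --- apply Corollary~\ref{lr_lem_maintech} at the last stage with input index $k + r - 3$ (valid since $k + r - 3 \ge 0$) and $\delta = 1$, obtaining $\SD_{k+r-2}(C_{n_{r-1}} \oplus C_{n_r}) \ge \SD_{k+r-2}(C_{n_r}) + n_{r-1} - 1$ --- but then discarded it as ``not directly it either'', apparently overlooking that $C_{n_{r-1}} \cong C_{n_r}$ in the only relevant case. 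That discarded step is the whole point; the paper does exactly this (phrased as the $r = 2$, $n_1 = n_2$, $k > 0$ case inside an induction on $s$), with no appeal to monotonicity.
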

\begin{proof}
We prove the result by induction on $s$.
For $s=0$ there is nothing to show.
Thus, we assume that the assertion holds for some $s\ge 0$, and all $r$ and $n_i$, and show the claim for $s+1$.
We consider $\SD_k(\oplus_{i=1}^{r} C_{n_i})$. By Corollary \ref{lr_lem_maintech}, we
know, provided that $n_1 < \prod_{i=2}^{r}n_i$,
\begin{equation}
\label{lr_eq_prop_small}
\SD_k(\oplus_{i=1}^{r} C_{n_i}) \ge (n_1 - 1) + \SD_{k+1}(\oplus_{i=2}^{r} C_{n_i}).\end{equation}
This is only not the case if $r=2$ and $n_1 = n_2$.
In this case, we have again by Corollary \ref{lr_lem_maintech} if $k>0$, applying it with $k-1$, that
$\SD_k(\oplus_{i=1}^{r} C_{n_i}) \ge (n_1 - 1) + \SD_{k}(\oplus_{i=2}^{r} C_{n_i}) $, and are done.
We consider $\SD_{k+1}(\oplus_{i=2}^{r} C_{n_i})$. If $r=2$, there remains nothing to show. Otherwise, we apply the induction hypothesis, to get
$\SD_{k+1}(\oplus_{i=2}^{r} C_{n_i})\ge \SD_{k+1 + s}(\oplus_{i=s+2}^{r} C_{n_i})$
if $s+1 \in [0,r-2]$ and also for $s+1 = r-1$ if $n_{r-1}\neq n_r$. Moreover, for $s+1=r-1$ in case $(k+1)+(r-1)>2$ we get $\SD_{k+1}(\oplus_{i=2}^{r} C_{n_i})\ge \sum_{i=2}^{s+1}(n_i - 1) + \SD_{k+s}(\oplus_{i=s+2}^{r} C_{n_i})$.
In combination with \eqref{lr_eq_prop_small}, this yields our claim.
\end{proof}

\begin{pro}
\label{lr_prop_large}
Let $H$ be a finite abelian group. Let $1<n_1 \le \dots \le n_r$ positive integers such that for each $s \in [1,r]$ we have $n_s < |H|\prod_{i=1}^{s-1}n_i$. Let $G=H \oplus (\oplus_{i=1}^r C_{n_i})$ and let $k \in \mathbb{N}$. Then,
\[\SD_k(G)\ge \SD_{k+r}(H)+ \sum_{i=1}^r (n_i-1).\]
\end{pro}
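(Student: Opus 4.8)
The plan is to prove Proposition \ref{lr_prop_large} by induction on $r$, repeatedly peeling off the \emph{largest} available cyclic factor $C_{n_r}$ by means of Corollary \ref{lr_lem_maintech}, so that the small group $H$ is the last thing standing. Concretely, write $G = G' \oplus C_{n_r}$ where $G' = H \oplus (\oplus_{i=1}^{r-1} C_{n_i})$, and apply Corollary \ref{lr_lem_maintech} with the group $G'$ playing the role of ``$H$'' there, with $n = n_r$, with $m = n_r$, and with $k$ replaced by $k-1$ (legitimate since $k \in \mathbb{N}$, so $k \geq 1$). The point of the hypothesis $n_s < |H|\prod_{i=1}^{s-1} n_i$ for every $s$, applied with $s = r$, is precisely that $n_r < |H| \prod_{i=1}^{r-1} n_i = |G'|$, which forces the correction term in Corollary \ref{lr_lem_maintech} to vanish: in the notation there, $\delta = \max\{0, m - |G'| + 1\} = \max\{0, n_r - |G'| + 1\} = 0$. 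Hence Corollary \ref{lr_lem_maintech} yields $\SD_{k-1+0}(G' \oplus C_{n_r}) \geq \SD_{(k-1)+1}(G') + n_r - 1$, i.e. $\SD_k(G) \geq \SD_k(G') + (n_r - 1)$.

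Next I would set up the induction cleanly. The base case $r = 0$ reads $\SD_k(H) \geq \SD_k(H)$, which is trivial (or one can start at $r = 1$, where the single inequality above gives exactly the claim with an empty sum over $i \in [1,0]$ and $\SD_{k+1}(H)$ — wait, here one must be a little careful about indices, so it is cleaner to phrase the induction so that each step goes from $r-1$ factors to $r$ factors). For the inductive step, having established $\SD_k(G) \geq \SD_k(G') + (n_r-1)$ as above, I would apply the induction hypothesis to $G' = H \oplus (\oplus_{i=1}^{r-1} C_{n_i})$: its hypotheses are inherited, since for each $s \in [1, r-1]$ the condition $n_s < |H| \prod_{i=1}^{s-1} n_i$ is literally the same condition that holds for $G$, and the exponent parameter is still a positive integer (indeed $k \geq 1$). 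The induction hypothesis gives $\SD_k(G') \geq \SD_{k + (r-1)}(H) + \sum_{i=1}^{r-1}(n_i - 1)$. Combining,
\[
\SD_k(G) \;\geq\; \SD_{k+r-1}(H) + \sum_{i=1}^{r-1}(n_i - 1) + (n_r - 1) \;=\; \SD_{k+r-1}(H) + \sum_{i=1}^{r}(n_i-1).
\]
Hmm — this gives $\SD_{k+r-1}(H)$, but the statement claims $\SD_{k+r}(H)$; since $\SD_{k+r-1}(H) \le \SD_{k+r}(H)$ the displayed bound is formally weaker, so the bookkeeping of the index shift must be redone. The correct shift comes from applying Corollary \ref{lr_lem_maintech} with ``$k$'' equal to $k + (r-1)$ at the \emph{last} peeling step rather than uniformly; equivalently, one should index the induction so that peeling the $j$-th factor consumes the budget increment that Corollary \ref{lr_lem_maintech} provides via its $+1$ in $\SD_{k+1}$. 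I would therefore reorganize: claim by induction on $r$ that $\SD_{k}(H \oplus \oplus_{i=1}^r C_{n_i}) \ge \SD_{k+r}(H) + \sum_{i=1}^r (n_i-1)$, peel $C_{n_r}$ using Corollary \ref{lr_lem_maintech} \emph{with parameter $k$ unchanged and $m = n_r$}, getting $\SD_{k+\delta}(\cdot) \ge \SD_{k+1}(H \oplus \oplus_{i=1}^{r-1}C_{n_i}) + n_r - 1$ with $\delta = 0$, and then feed $\SD_{k+1}(\cdot)$ of the smaller group into the induction hypothesis to pick up $\SD_{(k+1)+(r-1)}(H) = \SD_{k+r}(H)$ and $\sum_{i=1}^{r-1}(n_i-1)$. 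That reconciles the index.

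The main obstacle — and the only genuinely delicate point — is exactly this bookkeeping of which $\SD_{k+j}$ appears where, together with checking at every peeling step that the $\delta$-correction in Corollary \ref{lr_lem_maintech} is $0$. For the latter, I would verify that when peeling $C_{n_j}$ the ambient ``$H$'' of Corollary \ref{lr_lem_maintech} is $H \oplus \oplus_{i=1}^{j-1} C_{n_i}$, whose order is $|H| \prod_{i=1}^{j-1} n_i$, and the hypothesis $n_j < |H| \prod_{i=1}^{j-1} n_i$ gives $m - |H|_{\mathrm{amb}} + 1 = n_j - |H|\prod_{i=1}^{j-1} n_i + 1 \le 0$, hence $\delta = 0$; this uses the stated assumption for precisely $s = j$, so all $r$ instances of the hypothesis get used, one per peeling step. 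A minor point to double-check is that Corollary \ref{lr_lem_maintech} requires $n \in \mathbb{N} \setminus \{1\}$, which holds since $n_j > 1$, and $k \in \mathbb{N}_0 \cup \{\infty\}$, which holds throughout (the running first index stays a non-negative integer). No additional ingredients beyond Corollary \ref{lr_lem_maintech} and a straightforward induction are needed.
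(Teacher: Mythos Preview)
Your corrected argument---peel off $C_{n_r}$ via Corollary~\ref{lr_lem_maintech} with parameter $k$ unchanged (so $\delta=0$ since $n_r<|G'|$), obtaining $\SD_k(G)\ge \SD_{k+1}(G')+(n_r-1)$, and then apply the induction hypothesis to $G'$ with parameter $k+1$---is exactly the paper's proof. Your initial detour (shifting $k$ to $k-1$) is unnecessary and, as you noticed, loses one unit in the index; simply discard it and present the corrected version directly.
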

\begin{proof}
We induct on $r$.
For $r=1$, this is merely Corollary \ref{lr_lem_maintech}; note that by assumption $n_1<|H|$ and thus $\delta=0$.
And, the induction-step is shown by applying first the induction hypothesis with $n_1, \dots,n_{r-1}$
and then noting that, since $n_r < |H|\prod_{i=1}^{r-1}n_i$ we can apply Corollary \ref{lr_lem_maintech}, again with $\delta=0$.
\end{proof}

The proof of Theorem \ref{lr_thm_maintech} is now merely a combination of the preceding results.

\begin{proof}[Proof of Theorem \ref{lr_thm_maintech}]
Suppose $t$ fulfills our assumption. Let $H = \oplus_{i=1}^t C_{n_i}$.
First, we apply Proposition \ref{lr_prop_large} with $H$ as just defined---the conditions are fulfilled by our assumption on $t$---to get 
\(\SD_k(H \oplus (\oplus_{i=t+1}^r C_{n_i})) \ge \SD_{k+(r-t)}(H) + \sum_{i=t+1}^r(n_i-1).\)
We then apply Proposition \ref{lr_prop_small} to the group $H$
 to get
\[\SD_{k+r-t}(H) \ge \SD_{(k+r-t)+t -2 + \epsilon}(C_{n_t}) +\sum_{i=1}^{t-1}(n_i-1)\]
with $\epsilon =1$ if $n_{t-1}\neq n_t$ and with  $\epsilon=0$ if $k+r > 2$.
Combininig these two inequalities, yields the result. 
\end{proof}

In the remainder of this section, we discuss several applications of Theorem \ref{lr_thm_maintech}.

First, we give a simple explicit lower bound. For simplicity of the presentation, we ignore certain improvements and impose the conditions that the rank of $G$ is at least three.
In important special cases, we give a more detailed analysis later.

\begin{co}
\label{lr_co_gen}
Let $G = \oplus_{i=1}^r C_{n_i}$ where $r\in \mathbb{N}\setminus \{1,2\}$ and $1 < n_1 \mid \dots \mid n_r$. Let $k \in \mathbb{N}_0$.
If $t \in [2,r]$ such that $n_{s} < \prod_{i=1}^{s-1} n_i$ for each $s \in [t+1,r]$, then
\[\SD_k(G) \ge 
\Das(G) -  \max \left \{0,n_t - k-r+2  -\left\lfloor \frac{-1 + \sqrt{1 + 8(\max\{0,n_t-k-r+2\})}}{2}\right\rfloor \right  \}.\]
In particular, if $r \ge n_t +1 -k$, then $\SD_k(G)\ge \Das(G)$.
\end{co}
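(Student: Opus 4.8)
The plan is to derive Corollary \ref{lr_co_gen} directly from Theorem \ref{lr_thm_maintech} by feeding in the explicit cyclic lower bound from Lemma \ref{lem_standardbound}. First I would fix $t \in [2,r]$ satisfying the stated divisibility-type condition $n_s < \prod_{i=1}^{s-1} n_i$ for all $s \in [t+1,r]$. Since $r \ge 3$, either $n_{t-1} \ne n_t$ or $k + r > 2$ holds (indeed $r \ge 3$ already forces $k + r > 2$), so at least one of the two alternatives of Theorem \ref{lr_thm_maintech} applies. In the worst case we only get alternative (2), namely
\[
\SD_k(G) \ge \sum_{i \in [1,r] \setminus \{t\}} (n_i - 1) + \SD_{k+r-2}(C_{n_t}),
\]
so I would work with this bound; if $n_{t-1} \ne n_t$ the better bound with $\SD_{k+r-1}$ only helps, and the final estimate (stated with a $-k-r+2$ inside) is phrased for the weaker case anyway.

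Next I would apply Lemma \ref{lem_standardbound} to $\SD_{k+r-2}(C_{n_t})$. Set $m = \max\{0, n_t - (k+r-2)\} = \max\{0, n_t - k - r + 2\}$. If $k + r - 2 \ge n_t$, i.e. $m = 0$, then part (1) of Lemma \ref{lem_standardbound} (with $\ell = 1$) gives $\SD_{k+r-2}(C_{n_t}) = n_t$, and the max term in the corollary is $0$, so $\SD_k(G) \ge \sum_{i\in[1,r]\setminus\{t\}}(n_i-1) + n_t = \Das(G)$ exactly, which also proves the "in particular" clause since $r \ge n_t + 1 - k$ is precisely $k + r - 2 \ge n_t - 1$, hence (as everything is an integer) covers $m \le 0$ suitably — I should be slightly careful here and check that $r \ge n_t+1-k$ indeed gives $k+r-2 \ge n_t-1$; if $m$ could equal $1$ one still has $\SD_{k+r-2}(C_{n_t}) = n_t$ by part (1) when $k+r-1 \ge n_t$, so the "in particular" threshold should be re-examined against whether part (1) or part (2) is the operative one. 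In the main case $m \ge 1$ (so $k + r - 2 \le n_t - 1$), part (2) of Lemma \ref{lem_standardbound} with $k' = k+r-2$ and $\ell = 1$ gives
\[
\SD_{k+r-2}(C_{n_t}) \ge (k+r-2) + \left\lfloor \frac{-1 + \sqrt{1 + 8m}}{2} \right\rfloor.
\]

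Then it is just bookkeeping: substituting and using $\Das(G) = \sum_{i=1}^r (n_i - 1) + 1 = \sum_{i \in [1,r]\setminus\{t\}}(n_i-1) + (n_t - 1) + 1 = \sum_{i \in [1,r]\setminus\{t\}}(n_i-1) + n_t$, one gets
\[
\SD_k(G) \ge \Das(G) - n_t + (k+r-2) + \left\lfloor \frac{-1 + \sqrt{1+8m}}{2} \right\rfloor = \Das(G) - \left( m - \left\lfloor \frac{-1+\sqrt{1+8m}}{2}\right\rfloor \right),
\]
using $n_t - (k+r-2) = m$ (valid since $m \ge 1$). Since for $m = 0$ the parenthesized quantity is $0$, the two cases unify into the single displayed bound with $\max\{0, n_t - k - r + 2\}$ appearing both as the argument $m$ and inside the floor. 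For the "in particular" clause, $r \ge n_t + 1 - k$ gives $m = \max\{0, n_t-k-r+2\} \le 1$; when $m = 0$ we are done as above, and when $m = 1$ the correction term is $1 - \lfloor (-1+3)/2 \rfloor = 1 - 1 = 0$, so $\SD_k(G) \ge \Das(G)$ in all cases.

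I do not expect any serious obstacle — the only delicate point is the case analysis at the boundary ($m = 0$ versus $m = 1$, and making sure the right part of Lemma \ref{lem_standardbound} is invoked and that Theorem \ref{lr_thm_maintech}(2)'s hypothesis $k + r > 2$ is automatic from $r \ge 3$), plus verifying the floor identity $1 - \lfloor(-1+\sqrt 9)/2\rfloor = 0$ for the final clause. Everything else is a substitution of one inequality into another and rewriting $\sum(n_i-1)+n_t$ as $\Das(G)$.
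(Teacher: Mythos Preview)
Your proposal is correct and follows essentially the same route as the paper: apply Theorem \ref{lr_thm_maintech}(2) (whose hypothesis $k+r>2$ is guaranteed by $r\ge 3$), then insert the cyclic bound from Lemma \ref{lem_standardbound} and rewrite in terms of $\Das(G)$. Your treatment is in fact more explicit than the paper's at the boundary $m=1$ for the ``in particular'' clause, where you verify the floor term directly; the paper simply says ``the claims follow.''
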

\begin{proof}
By Theorem \ref{lr_thm_maintech}, we know that
$\SD_k(\oplus_{i=1}^{r}C_{n_i}) \ge \sum_{i\in [1,r]\setminus \{t\}} (n_i -1)+ \SD_{k+r-2}( C_{n_t})$.
By Lemma \ref{lem_standardbound}, we know that $\SD_{k+r-2}( C_{n_t}) \ge k+r-2  +\lfloor (-1 + \sqrt{1 + 8(n-(k+r-2))})/2 \rfloor$ for $k+r-2  \le n_t-1$ and $\SD_{k+r-2}( C_{n_t})=n_t$ for
$k + r - 2 \ge n_t $. The claims follow.
\end{proof}

 The following result improves and generalizes results in \cite{GaoGe} and \cite{baginski}. We recall that $\SD_0(G)=\SD(G)$ and $\SD_1(G)=\Ol(G)$ and point out that the main point is that we get the exact value of these classical constants for a larger class of groups (not the fact that we also get the values of $\SD_k(G)$ for other $k$).
\begin{co}
\label{lr_co_pgr}
Let $G$ be a $p$-group, or more generally a finite abelian group with $\Dav(G)=\Das(G)$, and $k\in \mathbb{N}_0$.
If $\mathsf{r}(G)\ge \exp(G)+1-k$, then
\[\SD_k(G)= \Das(G).\]
\end{co}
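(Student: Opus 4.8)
The plan is to deduce this from Corollary~\ref{lr_co_gen} together with the classical lower bound $\Das(G)\le \Dav(G)$ and the general upper bound $\SD_k(G)\le \Dav(G)$ from Lemma~\ref{lr_lem_basic0}. First I would dispose of trivial cases: if $\mathsf{r}(G)\le 1$, then the hypothesis $\mathsf{r}(G)\ge \exp(G)+1-k$ forces $\exp(G)$ to be small (at most $k+1$, hence $G$ is small or even trivial when $k$ is small), and the statement can be checked directly; more cleanly, one notes that for $\mathsf{r}(G)=1$ we have $G\cong C_n$ and the claim reduces to $\SD_k(C_n)=n$ when $k\ge n-1$, which is Lemma~\ref{lem_standardbound}(1) (or Lemma~\ref{lr_lem_basic0}(2)). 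So assume $r=\mathsf{r}(G)\ge 2$; write $G=\bigoplus_{i=1}^r C_{n_i}$ with $1<n_1\mid\cdots\mid n_r$, so $\exp(G)=n_r$ and $\Das(G)=\sum_{i=1}^r(n_i-1)+1$. If additionally $r\ge 3$, I would apply Corollary~\ref{lr_co_gen} with the choice $t=r$ (always admissible, since the divisibility condition on the $n_s$ for $s\in[t+1,r]$ is vacuous when $t=r$); the hypothesis $r\ge \exp(G)+1-k = n_t+1-k$ is then exactly the condition ``$r\ge n_t+1-k$'' appearing in that corollary, which yields $\SD_k(G)\ge \Das(G)$.

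For the reverse inequality I would invoke $\Das(G)=\Dav(G)$ (true for $p$-groups, and assumed in the more general statement) together with Lemma~\ref{lr_lem_basic0}(3), which gives $\SD_k(G)\le \Dav(G)=\Das(G)$. Combining the two inequalities gives $\SD_k(G)=\Das(G)$, as claimed.

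The one genuine gap in the above is the case $r=2$, which Corollary~\ref{lr_co_gen} explicitly excludes. Here the hypothesis reads $2\ge n_2+1-k$, i.e. $k\ge n_2-1=\exp(G)-1$, which by Lemma~\ref{lr_lem_basic0}(2) immediately gives $\SD_k(G)=\Dav(G)=\Das(G)$. So this case is in fact the easiest, not an obstacle. In writing this up I would therefore structure the argument as: (i) reduce to $r\ge 2$; (ii) handle $r\ge 3$ via Corollary~\ref{lr_co_gen} with $t=r$; (iii) handle $r=2$ (and $r=1$) directly via Lemma~\ref{lr_lem_basic0}(2); (iv) close with $\SD_k(G)\le\Dav(G)=\Das(G)$ for the upper bound. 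The only point requiring any care — and the one I would flag as the ``main obstacle'', though it is minor — is making sure the translation ``$\mathsf{r}(G)\ge\exp(G)+1-k$'' matches ``$r\ge n_t+1-k$'' with the specific admissible choice $t=r$, i.e. that $n_t=n_r=\exp(G)$; this is immediate from the normal-form conventions of Section~\ref{sec_prel}.
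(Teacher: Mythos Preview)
Your argument for $r\ge 3$ is essentially the paper's own (the paper invokes Theorem~\ref{lr_thm_maintech} directly with $t=r$, you go through its consequence Corollary~\ref{lr_co_gen}; these are the same computation), and your upper bound and $r=1$ case are fine.

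There is, however, a real gap in your $r=2$ case. You write that $k\ge n_2-1=\exp(G)-1$ lets you invoke Lemma~\ref{lr_lem_basic0}(2), but that lemma requires $k\ge \Dav(G)-1$, and for a rank-two group $\Dav(G)-1=n_1+n_2-2\ge n_2>n_2-1$ since $n_1\ge 2$. Concretely, take $G=C_3^2$: the hypothesis admits $k=2$, yet Lemma~\ref{lr_lem_basic0}(2) only applies from $k\ge 4$ onward; your argument says nothing about $\SD_2(C_3^2)$ or $\SD_3(C_3^2)$.

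The fix is to bypass Corollary~\ref{lr_co_gen} and use Theorem~\ref{lr_thm_maintech} itself, which \emph{is} stated for $r\ge 2$. With $t=r=2$, part~(2) of that theorem applies as soon as $k+r>2$; and the hypothesis $2\ge n_2+1-k$ forces $k\ge n_2-1\ge 1$, so $k+r\ge 3$. One obtains $\SD_k(G)\ge (n_1-1)+\SD_k(C_{n_2})$, and since $k\ge n_2-1$ Lemma~\ref{lem_standardbound}(1) gives $\SD_k(C_{n_2})=n_2$, whence $\SD_k(G)\ge n_1+n_2-1=\Das(G)$. This is precisely what the paper does (it also observes that the excluded case $r=2$, $k=0$ cannot arise under the hypothesis).
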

\begin{proof}
We have $ \SD_k(G) \le \Dav(G) = \Das(G) $ by Lemma \ref{lr_lem_basic0} and assumption.
It thus suffices to show that if $ \mathsf{r}(G) \ge \exp(G) + 1 - k $, then
\(\SD_k(G)\ge \Das(G).\)
By Lemma \ref{lem_standardbound}, we have $\SD_{k+\mathsf{r}(G)-2}(H)= |H|$ for every cyclic subgroup $H$ of $G$.
The result is thus clear for $\mathsf{r}(G)=1$ and follows by Theorem \ref{lr_thm_maintech}, with $t=\mathsf{r}(G)$,
in all other cases (note that $\mathsf{r}(G)=2$ and $k=0$ are impossible by the condition).
\end{proof}

For certain types of groups additional improvements are possible.
On the one hand, we did not use our method in its full strength, e.g., we could weaken the condition $\mathsf{r}(G)\ge \exp(G)+1-k$ to $\mathsf{r}(G)\ge n_t+1-k$ with $n_t$ as in Theorem \ref{lr_thm_maintech}.
In particular, in this way we see that for certain $p$-groups we can assert the equality of
the Olson constant and Strong Davenport constant with the Davenport constant even if the rank is much smaller than the exponent; a particularly extreme example, where the rank is only of order $\log\log (\exp(G))$ are groups of the form $C_p^2\oplus  (\oplus_{i=0}^{r-3}C_{p^{2^i}})$ (note that we can chose $t=3$).
On the other hand, there are more subtle improvements for particular types of groups (see Section \ref{sec_refined}).

Our results can also be used in a somewhat different direction. Namely, they can be used to show the existence of finite abelian groups---we could also exhibit explicit examples---where the Olson and the Strong Davenport constant
exceed the $\Das$-invariant (by any prescribed value). To the best of our knowledge, no example of a group
with  $\SD(G)> \Das(G)$ or $\Ol(G)> \Das(G)$ appeared in the literature up to now.

\begin{co}
\label{co_excess}
Let $k \in \mathbb{N}_0 \cup \{\infty \}$ and $d \in \mathbb{N}$. There exists some finite abelian group $G$ such that
\[\SD_k(G) \ge \Das(G)+d.\]
\end{co}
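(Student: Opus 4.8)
The statement asks for groups where $\SD_k(G)$ exceeds $\Das(G)$ by an arbitrarily prescribed amount $d$. The natural strategy is to exploit the discrepancy, already visible in Corollary~\ref{lr_co_gen}, between the $\SD_{k+r-2}(C_{n_t})$-term and the $(n_t-1)$-term: when the rank $r$ is large, $\SD_{k+r-2}(C_{n_t})$ saturates at the full group order $n_t$, and summing over the components the gain $\sum_i 1$ accumulated over the construction can overtake the loss implicit in $\Das$. Concretely, I would first reduce to $k=0$ (hence $\SD_0 = \SD$), since $\SD_k(G) \ge \SD_0(G)$ by Lemma~\ref{lr_lem_basic0}(3), so a bound for $\SD(G)$ suffices for all finite $k$, and the case $k=\infty$ is $\Dav(G)$, which is $\ge \Das(G)$ and for which the classical strict inequality $\Dav(G) > \Das(G)$ for suitable $G$ (cf.~\cite[Section 3]{GaoGe1}) already gives what is needed (and can be amplified).

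**Main construction.** For the substantive case, take $G = C_p^r$ with $p$ a small fixed prime (say $p=3$) and $r$ large. Here $\Das(G) = r(p-1)+1$. On the other hand, apply Corollary~\ref{lr_co_pgr} (or directly Theorem~\ref{lr_thm_maintech} with $t=r$): once $r \ge \exp(G)+1-k = p+1-k$, which certainly holds for $r$ large and $k=0$, we get $\SD_k(G) = \Das(G)$ — so homocyclic $p$-groups by themselves only give equality, not excess. The excess must come from using \emph{non-homocyclic} groups where the cyclic components are small but numerous relative to a large top component. So instead I would take $G = C_{n}^{r} \oplus C_{N}$ (or $C_n^r$ with one component inflated), where $n$ is a small fixed integer, $r$ is chosen large enough that $\SD_{r-2+k}(C_n) = n = \Dav(C_n) > n-1$, i.e. each of the $r$ small components contributes its \emph{full} order rather than order-minus-one through repeated application of Corollary~\ref{lr_lem_maintech}. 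Iterating Proposition~\ref{lr_prop_small} across all $r$ small components, each step where the relevant index exceeds $n_t-1$ replaces a contribution of $n_i - 1$ by one of $n_i$; doing this for $d$ of the components (which requires the index to be large enough $d$ times, hence $r \gtrsim n + d$) yields $\SD_k(G) \ge \Das(G) + d$.

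**Execution details.** More precisely: fix $n \ge 2$, fix $d \in \mathbb{N}$, and set $r = n + d$, $G = C_n^{r}$. Write $\Das(G) = r(n-1)+1$. I would apply Theorem~\ref{lr_thm_maintech}(2) with $t = r$ (admissible), giving $\SD_k(G) \ge (r-1)(n-1) + \SD_{k+r-2}(C_n)$. Since $k + r - 2 \ge r - 2 = n + d - 2 \ge n$ once $d \ge 2$, Lemma~\ref{lem_standardbound}(1) gives $\SD_{k+r-2}(C_n) = n$, so $\SD_k(G) \ge (r-1)(n-1) + n = r(n-1) + 1 = \Das(G)$ — only equality again. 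The point I am missing is that one saturated component gains just $1$ over $n-1$, but $\Das$ already "expects" only $n-1$ from that component, so saturating one component exactly closes the gap. To get genuine excess I must saturate the remaining $r-1$ components \emph{beyond} what $\Das$ accounts for, which is impossible within a homocyclic group — there $\Das$ is tight. Hence the construction genuinely needs mixed orders: take $H = C_m$ with $m$ large and $G = C_n^r \oplus C_m$ with $n$ small and $r$ large, apply Proposition~\ref{lr_prop_large} with this $H$ to absorb all $r$ copies of $C_n$ at $\delta = 0$, obtaining $\SD_k(G) \ge \SD_{k+r}(C_m) + r(n-1)$; now $\Das(G) = r(n-1) + (m-1) + 1 = r(n-1)+m$, while $\SD_{k+r}(C_m) = m$ as soon as $k+r \ge m$, i.e. $r \ge m$. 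That gives equality yet again. The actual excess comes from iterating differently: split off the small components one at a time using Corollary~\ref{lr_lem_maintech} in the regime where $m$ (the running index from $C_n$) satisfies $m - |H| + 1 \le 0$ so $\delta = 0$, \emph{and} after each split the cyclic remainder is replaced by a term at its full order.

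**The real mechanism and the obstacle.** On reflection the clean way is: take $G = C_{q}^{r}$ where $q$ is a prime power and $r$ is \emph{large} but we do \emph{not} use $t = r$; instead the excess is extracted by comparing with the construction of Corollary~\ref{co_excess} itself as a consequence of the earlier inequality $\SD_k(G) \ge \Das(G)$ combined with a base case where strict inequality already holds, then amplified by direct sums: if $\SD(G_0) \ge \Das(G_0) + 1$ for some explicit $G_0$ (which follows from Corollary~\ref{lr_co_pgr} applied to a group where $\Dav(G_0) > \Das(G_0)$ — such groups exist by \cite{GaoGe1} — so that $\SD_k(G_0) = \Dav(G_0) > \Das(G_0)$ once the rank condition holds), then taking $G = G_0^{d}$ and iterating the inequality $\SD_k(G \oplus G') \ge \SD_k(G) + \SD(G') - \text{const}$ from the Corollary after Theorem~\ref{thm_abstractbound}(2), one accumulates the excess $d$ times while $\Das$ is subadditive-with-$+1$, so the gap grows linearly in $d$. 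Thus the plan is: (i) produce one group $G_0$ with $\SD_k(G_0) > \Das(G_0)$ via a large-rank group whose $\Dav > \Das$; (ii) form $G = G_0^{\,d}$ and apply the subadditivity corollary $d-1$ times to get $\SD_k(G) \ge d\cdot\SD_k(G_0) - 2(d-1)$; (iii) compare with $\Das(G) = d(\Das(G_0)-1)+1$ and check the difference is $\ge d$ for the chosen $G_0$. The main obstacle is step (i): verifying that a group with $\Dav(G_0) > \Das(G_0)$ can simultaneously satisfy the rank hypothesis $\mathsf{r}(G_0) \ge \exp(G_0) + 1 - k$ of Corollary~\ref{lr_co_pgr}. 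The known examples of $\Dav > \Das$ (e.g. certain $C_2^4 \oplus C_{2n}$-type groups in \cite{GaoGe1}) have bounded rank relative to exponent, so one must instead take direct sums of such a small witness with a high-rank homocyclic group and track how $\Das$, $\Dav$, and $\SD_k$ behave under that sum — using $\Dav(G\oplus H) \ge \Dav(G) + \Dav(H) - 1$, the matching $\Das$ identity, and Theorem~\ref{thm_abstractbound} — to preserve a strict surplus while boosting the rank into the range where the surplus can be read off. That bookkeeping, rather than any single hard inequality, is where the care is needed.
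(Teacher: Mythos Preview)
Your proposal eventually circles around the right idea, but the concrete plan (i)--(iii) does not work as written. You reduce to $k=0$ and then want to amplify a single unit of excess $\SD(G_0) \ge \Das(G_0)+1$ by passing to $G_0^{\,d}$ via the inequality $\SD(G) \ge \SD(G/H) + \Ol(H) - 2$. Compute: with $\SD(G_0) = \Das(G_0)+1$ and $\Ol(G_0) \ge \Das(G_0)+1$, iteration gives
\[
\SD(G_0^{\,d}) \ge \SD(G_0) + (d-1)\bigl(\Ol(G_0)-2\bigr) \ge d\,\Das(G_0) - d + 2 = \Das(G_0^{\,d}) + 1,
\]
so the excess stays at $1$ rather than growing to $d$. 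The culprit is the constant $-2$ in the $\SD$-subadditivity; only the $\Dav$-subadditivity $\Dav(G\oplus H) \ge \Dav(G)+\Dav(H)-1$ (constant $-1$) actually amplifies a unit of excess. You also correctly note that step (i) itself is problematic: Corollary~\ref{lr_co_pgr} requires $\Dav(G)=\Das(G)$, precisely the opposite of what you need.

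The paper's argument is exactly the clean version of what your final paragraph gropes toward, and it sidesteps both issues at once. First amplify at the \emph{Davenport} level: take any $G''$ with $\Dav(G'')>\Das(G'')$ and set $G'=(G'')^d$, so $\Dav(G') \ge \Das(G')+d$. Now choose $k'$ large enough that $\SD_{k'}(G')=\Dav(G')$ (Lemma~\ref{lr_lem_basic0}), and with $n=\exp(G')$ apply Proposition~\ref{lr_prop_large} to $G = G' \oplus C_n^{\,k'-k}$:
\[
\SD_k(G) \ge \SD_{k'}(G') + (k'-k)(n-1) = \Dav(G') + (k'-k)(n-1) \ge \Das(G') + d + (k'-k)(n-1) = \Das(G)+d.
\]
The point is that adding copies of $C_{\exp(G')}$ shifts both $\SD_k$ (via Proposition~\ref{lr_prop_large}) and $\Das$ by exactly $(n-1)$ per copy, so the excess $d$ already banked at the Davenport level survives intact; no delicate bookkeeping is needed.
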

\begin{proof}
It is well-known that there exists some finite abelian group $G'$ such
$\Dav(G') \ge \Das(G')+ d$; in fact, this follows directly from the fact that there exists a
$G''$ with $\Dav(G'') \ge \Das(G')+ 1$ and considering $(G'')^d$ (cf.~\cite[Section 3]{GaoGe1}).
Note that $G'$ is non-cyclic. By Lemma \ref{lr_lem_basic0}, we know that
$\Dav(G')= \SD_{k'}(G')$ for some, in fact each, sufficiently large $k'$, and we assume $k' \ge k$.
Let $n= \exp(G')$. By Proposition \ref{lr_prop_large} we have $\SD_k(G' \oplus C_n^{k' - k})\ge \SD_{k'}(G')+ (k' -k)(n-1)= \Dav(G')+(k' -k)(n-1) \ge \Das(G')+d+ (k' -k)(n-1) = \Das(G' \oplus C_n^{k' - k})+d$.
\end{proof}

Recasting this result in a negative way, we see that even taking various improvements presented in later parts of the paper into account, our method cannot yield the actual value of the Olson and the Strong Davenport constant for \emph{all} groups, as none of the explicit bounds exceeds $\Das(G)$.

In the result below we characterize for groups of rank at most two for which $k$ we have $\SD_k(D)=\Dav(G)$. We do so mainly to illustrate the relation to the inverse problem associated to the Davenport constant and thus keep the proof brief; the result is a direct consequence of the recent solution of the inverse problem associated to $\Dav(G)$ for groups of rank two,
due to Reiher \cite{reiher}, Geroldinger, Gao, Grynkiewicz \cite{GGG}, and Schmid \cite{schmid}.

\begin{co}
\label{lr_co_2r}
Let $G$ be a group of rank at most two and $k \in \mathbb{N}_0$.
Then $\SD_k(G)= \Dav(G)$ if and only if
\begin{itemize}
\item $k\ge \exp(G)-1$ for $G$ homocyclic and not of the form $C_2^2$,
\item $k\ge \exp(G)-2$ otherwise.
\end{itemize}
\end{co}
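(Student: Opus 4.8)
The plan is to prove both directions by combining the results on $\SD_k$ already developed with the known solution of the inverse problem for the Davenport constant in rank at most two. Throughout, write $n = \exp(G)$ and recall that for rank $\le 2$ groups $\Dav(G) = \Das(G)$, so by Lemma \ref{lr_lem_basic0} we always have $\SD_k(G) \le \Dav(G)$, and $\SD_k(G) = \Dav(G)$ for $k \ge \Dav(G)-1$. The cyclic case is quick: for $G = C_n$, $\SD_k(C_n) = n = \Dav(C_n)$ iff $k \ge n-1$ by Lemma \ref{lem_standardbound}(1) (and for $k \le n-1$ the lower bound there is strictly less when $k<n-1$, so the threshold is sharp); this matches the second bullet since $C_1$ and $C_n$ for $n\ge 2$ are ``not homocyclic'' only in the degenerate sense — actually $C_n$ \emph{is} homocyclic, so one must check that for $C_n$ the stated answer $k \ge n-1$ agrees with the first bullet, which it does. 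So the genuine content is the rank-two case.

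For the rank-two case write $G \cong C_m \oplus C_n$ with $m \mid n$, $m \ge 2$, so $\Dav(G) = m + n - 1$. The ``if'' direction is the lower-bound construction: I would apply Corollary \ref{lr_lem_maintech} (the $H \oplus C_n$ form) with $H = C_m$, using that for $k' \ge n-1$ we have $\SD_{k'}(C_n) = n$ and for the relevant smaller indices we can still push $\SD_{k'}(C_n)$ up to $n$ once $k' \ge n-1$. Concretely: to get $\SD_k(C_m \oplus C_n) = m+n-1$ it suffices to produce a minimal zero-sum sequence of that length with $\cm_1 \le k$; take an extremal minimal zero-sum sequence over $C_n$ of length $n$ (the sequence $e^n$, which has $\cm_1 = n-1$) and extend it over the $C_m$-direction via the construction in Theorem \ref{thm_abstractbound}/Corollary \ref{lr_lem_maintech}. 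This shows $\SD_{k}(G) = \Dav(G)$ once $k \ge (n-1) - 1 = n-2$ in general, and a parity/height refinement (as in the remarks after Corollary \ref{lr_lem_maintech}, and the special behaviour of $C_2^2$) pushes this to $n-1$ precisely in the homocyclic non-$C_2^2$ case, because there every length-$\Dav(G)$ minimal zero-sum sequence has height forcing $\cm_1 \ge n-1$.

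The ``only if'' direction is where the inverse theorem enters and is the main obstacle. Suppose $\SD_k(G) = \Dav(G)$; then there is a minimal zero-sum sequence $S$ over $G$ of length $\Dav(G)$ with $\cm_1(S) \le k$, i.e.\ with $|S| - |\supp(S)| \le k$. By the solved inverse problem for $\Dav$ in rank two (Reiher \cite{reiher}; Geroldinger--Gao--Grynkiewicz \cite{GGG}; Schmid \cite{schmid}), every such $S$ has a prescribed normal form: there is a basis $(e_1, e_2)$ of $G$ with $\ord(e_2) = n$ such that $S = e_1^{m-1} \prod_{i=1}^{n}(x_i e_1 + e_2)$ with $\sum x_i \equiv 1 \pmod m$ (or the minor variant when $m=n$). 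One then reads off the height: the term $e_1$ occurs with multiplicity $m-1$, and among the $n$ terms $x_i e_1 + e_2$ the pigeonhole on the values $x_i \in [0,m-1]$ forces some value to be repeated $\ge \lceil n/m \rceil$ times, so $\cm_1(S) \ge (m-2) + (n - m)$ in general, i.e.\ $\ge n-2$; and when $m = n$ (homocyclic) the structure is rigid enough that $\cm_1(S) \ge n-1$, \emph{except} for $C_2^2$ where $\Dav = 3$ and the extremal sequence $e_1 e_2 (e_1+e_2)$ is squarefree, giving $\cm_1 = 0$ and hence the threshold drops to $n - 2 = 0$. Assembling: $k \ge \cm_1(S) \ge$ the claimed bound. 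The one delicate point to get right is the homocyclic case: I must verify from the explicit list of extremal sequences in \cite{reiher,GGG,schmid} that for $C_n^2$, $n \ge 3$, no extremal minimal zero-sum sequence attains cumulated multiplicity below $n-1$ — equivalently that one cannot make all the ``$x_i$'' distinct when $m = n$ while keeping the congruence $\sum x_i \equiv 1$ — and separately that $C_2^2$ is the unique exception; this is a short but careful case inspection, which, as the preamble to the corollary says, I would state as a direct consequence of those inverse results and keep brief.
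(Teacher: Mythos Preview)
Your approach is essentially the paper's: the ``if'' direction via Corollary~\ref{lr_lem_maintech} (the paper also cites Corollary~\ref{lr_co_pgr} for the homocyclic rank-two case, but that is itself built on~\ref{lr_lem_maintech}), and the ``only if'' direction via the solved inverse problem for $\Dav(G)$ in rank at most two, reading off $\cm_1$ from the normal form. Your key observation in the homocyclic case --- that for $n\ge 3$ the $a_i$ cannot all be distinct because $\sum_{i=0}^{n-1} i \not\equiv 1 \pmod n$ --- is exactly the paper's argument.

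There is one genuine slip. In the cyclic case you claim the threshold $k\ge n-1$ is sharp because ``the lower bound [of Lemma~\ref{lem_standardbound}] is strictly less when $k<n-1$''. But a lower bound falling short of $n$ does not show $\SD_k(C_n)<n$; you need the inverse statement for $C_n$ (every minimal zero-sum sequence of length $n$ in $C_n$ equals $e^n$ for some generator $e$, hence has $\cm_1=n-1$), which is well known, is what the paper uses, and is precisely the kind of argument you yourself deploy in rank two. Also, the last sentence of your ``if'' paragraph mixes the two directions: the clause ``because there every length-$\Dav(G)$ minimal zero-sum sequence has $\cm_1\ge n-1$'' is an ``only if'' argument, not a construction; for the ``if'' direction in the homocyclic case you simply need the construction to reach $\Dav(G)$ at $k=n-1$, which Corollary~\ref{lr_lem_maintech} (applied with $\delta=1$) or Corollary~\ref{lr_co_pgr} already gives.
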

\begin{proof}
We assert that $\SD_k(G)= \Dav(G)$ under the respective assumptions.
For $G$ of the form $C_2^2$, the claim follows by direct inspection (cf.~Section \ref{sec_small} for a result including this case). Assume $G$ is not of that form.
We recall that $\Dav(G)=\Das(G)$.
Since under the assumptions on $k$, we have $\SD_k(G)\ge \Das(G)$ (by Lemma \ref{lr_lem_basic0}, Corollaries \ref{lr_co_pgr} and \ref{lr_lem_maintech} for the cyclic, rank two homocyclic, and remaining case, resp.),
we see that $\SD_k(G)= \Dav(G)$ holds for the claimed $k$.

Conversely, let $A \in \minsequ{} (G)$ with $|A|=\Dav(G)$. We have to show that
$\cm(S)\ge \exp(G)-1$ and $\cm(S)\ge \exp(G)-2$, resp.
For cyclic $G$ it is well-known that $A=e^{\exp(G)}$ for $e$ a generating element of $G$ and the claim follows in this case. For $G$ of rank two, the structure of $S$ is known as well; \cite[Corollary]{GGG} gives a conditional result and in \cite{reiher} it was proved that this condition is always fulfilled.
We discuss the homocylic case.
Let $n=\exp(G)$.
In this case it is known that there exists an independent generating set $\{e_1,e_2\}$ such that $A=e_1^{n-1}\prod_{i=1}^n(e_2+a_ie_1)$
with $a_i\in [0,n-1]$ and $\sum_{i=1}^n a_i \equiv 1 \pmod{n}$.
This last condition implies that, for $n\neq 2$, not all $a_i$s are distinct (cf.~the discussion after Lemma \ref{lr_lem_sum}).
Thus $\cm(A)\ge (n-2)+1$, implying the claim.
The non-homocyclic case is similar; we omit the details.
\end{proof}

\section{Groups with large exponent}
\label{sec_large}

We discuss the problem of obtaining lower bounds for the Olson and the Strong Davenport constant
for groups with a large exponent (in a relative sense).
In particular, our considerations include cyclic groups and groups of rank two.
In our general discussion, we emphasize the Olson constant over the Strong Davenport constant. At the end, we determine the Strong Davenport constant for
$C_p$ and $C_p^2$, for $p$ a large prime, where the Olson constant is already known.

For the Olson constant, the case of cyclic groups received, in particular recently, considerable attention (cf.~the preceding discussions).
As discussed the case of prime cyclic groups is meanwhile solved;
yet, the general case remains open and it is known that the answer cannot be as uniform
as in the prime case. We show how our method of constructing zero-sumfree sets allows
to derive all known `exotic' examples of large zero-sumfree sets in a fairly direct way.

To simplify the subsequent discussion, we recall the two classical constructions for large zero-sumfree sets of cyclic groups; for the former also see Lemma \ref{lem_standardbound}.
Let $n\in \mathbb{N}_{\ge 4}$ and $e$ a generating element of $C_n$.
\begin{itemize}
  \item $\prod_{i=1}^k ie$, where $\sum_{i=1}^k i \le n-1$, is an element of $\freesequ{0}(C_n)$.
  \item $(-2e)e\prod_{i=3}^k ie$, where $\sum_{i=1}^k i \le n+1$, is an element of $\freesequ{0}(C_n)$ for $n \ge 4$.
\end{itemize}
Note that in both cases $k$ is the length/cardinality, while the condition on $k$ in the second case is weaker,
though for most $n$ the condition is only formally weaker.
As already discussed, for $n$ prime, these construction when choosing $k$ as large as possible yield zero-sumfree sets of maximal cardinality. However, for certain $n$, better constructions are known; they are attributed to
Selfridge in \cite[C15]{guy}.

We apply Theorem \ref{thm_abstractbound} to obtain lower bounds for groups of the form $G' \oplus C_n$ where $n$ is large relative to $|G'|$; in the result below there is no explicit assumption on the size of $n$ relative to $|G'|$, yet the result, in particular the explicit lower bounds, are only useful if $n$ is at least $|G'|$.
One way to do so, yields the following result that generalizes and refines a result for groups of rank two established in \cite[Theorem 9]{chabela}.

\begin{pro}
Let $G \cong G'\oplus C_n$ where $\exp(G')=m$ and $m \mid n$.
\begin{enumerate}
\item Then
\(\Ol(G)\ge \Ol(G')+ \SD_{(1,|G'|)}(C_n)-1.
\)
In particular,
\[\Ol(G)\ge \Ol(G')+|G'|d+  \left\lfloor \frac{n-1}{d+1} -\frac{|G'|d}{2}\right\rfloor\]
where $d= \bigl\lfloor\frac{-1+\sqrt{1+8(n-1)/|G'|}}{2} \bigr\rfloor$.
\item Then
\(\Ol(G)\ge \Ol(G'\oplus C_m)+ \SD_{(1,|G'|m)}(C_{n/m})-1.
\)
In particular,
\[\Ol(G)\ge \Ol(G' \oplus C_m) + |G'| m d + \left\lfloor \frac{n - m}{m(d+1)} -\frac{|G'|md}{2}\right\rfloor\]
where $d=\bigl \lfloor\frac{-1+\sqrt{1+8(n-m)/(m^2|G'|)}}{2}\bigr \rfloor$.
\end{enumerate}
\end{pro}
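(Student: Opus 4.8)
The plan is to deduce both parts directly from Theorem~\ref{thm_abstractbound}, choosing the subgroup with respect to which we pass to the quotient appropriately in each case, and then to substitute the explicit cyclic lower bound of Lemma~\ref{lem_standardbound} to obtain the ``in particular'' statements. Throughout I use $\Ol = \SD_1$ and $\SD_{(k,1)} = \SD_k$ together with $\SD_1 = \Ol_0$ (see Lemma~\ref{lr_lem_basic} and the definitions).

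For part (1) I would take $H = G' \oplus \{0\}$, a subgroup of $G = G' \oplus C_n$ with $|H| = |G'|$ and $G/H \cong C_n$. Applying Theorem~\ref{thm_abstractbound} with $k_1 = k_2 = 1$ gives $k_1 + \max\{0,k_2-1\} = 1$ and $\epsilon = 0$ (since $k_1 \neq 0$), hence
\[
\Ol(G) = \SD_1(G) \ge \SD_{(1,|G'|)}(C_n) + \SD_1(G') - 1 = \SD_{(1,|G'|)}(C_n) + \Ol(G') - 1 .
\]
For the explicit form, apply Lemma~\ref{lem_standardbound}(2) to $C_n$ with $k=1$ and $\ell = |G'|$: the resulting $d$ is precisely the quantity in the statement, and substituting $k=1$ makes the floor term match (the case $n = 1$, where $k \le n-1$ fails, being trivial).

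For part (2) the only point is to pick $H$ so that $G/H \cong C_{n/m}$ and $|H| = |G'|m$; then $H \cong G' \oplus C_m$ and $\SD_{k_2}(H) = \SD_1(G'\oplus C_m) = \Ol(G'\oplus C_m)$. Since $m \mid n$, the group $C_n$ has the subgroup $(n/m)C_n$ of order $m$, so I set $H = G' \oplus (n/m)C_n \cong G' \oplus C_m$; then $H \le G$, $|H| = |G'|m$, and $G/H \cong C_n / ((n/m)C_n) \cong C_{n/m}$. Applying Theorem~\ref{thm_abstractbound} with $k_1 = k_2 = 1$ (again $\epsilon = 0$) yields
\[
\Ol(G) \ge \SD_{(1,|G'|m)}(C_{n/m}) + \SD_1(G'\oplus C_m) - 1 = \SD_{(1,|G'|m)}(C_{n/m}) + \Ol(G'\oplus C_m) - 1 .
\]
The explicit form follows from Lemma~\ref{lem_standardbound}(2) applied to $C_{n/m}$ with $k = 1$ and $\ell = |G'|m$, using the elementary identities $(n/m - 1)/(|G'|m) = (n-m)/(m^2|G'|)$ and $(n/m - 1)/(d+1) = (n-m)/(m(d+1))$ to rewrite the expressions; the degenerate case $n = m$ (where $C_{n/m}$ is trivial) reduces to the tautology $\Ol(G) \ge \Ol(G' \oplus C_m)$.

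I do not expect a genuine obstacle: essentially all the content lies in Theorem~\ref{thm_abstractbound} and Lemma~\ref{lem_standardbound}. The only points requiring (minor) care are verifying that the $\epsilon = 1$ exception of Theorem~\ref{thm_abstractbound} never triggers---automatic here since we always use $k_1 = 1 > 0$---, checking that the subgroup $H$ chosen in part (2) really has quotient isomorphic to $C_{n/m}$ (this is where $m = \exp(G') \mid n$ is used), and the routine algebraic simplification of the floor expressions.
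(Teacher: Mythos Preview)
Your proposal is correct and follows exactly the same approach as the paper: apply Theorem~\ref{thm_abstractbound} with $k_1=k_2=1$, choosing $H\cong G'$ for part~(1) and $H\cong G'\oplus C_m$ for part~(2), and then invoke Lemma~\ref{lem_standardbound} for the explicit bounds. You have simply filled in more detail than the paper's terse proof (the verification that $\epsilon=0$, the explicit identification of the subgroup in part~(2), and the algebraic simplifications), all of which is accurate.
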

\begin{proof}
We apply Theorem \ref{thm_abstractbound} with $k_1=k_2=1$ and chose $H$ such that $H \cong G'$ and $G/H \cong C_n$,
and $H \cong G' \oplus C_m$ and $G/H \cong C_{n/m}$, respectively.
The additional statements follow by invoking Lemma
\ref{lem_standardbound}.
\end{proof}

In case $G$ is cyclic, that is $|G'|=1$, the first assertion yields no new insight,
yet the second one can yield an improvement over the classical lower bounds.
Namely, if $2 \mid n$, then this bound yields $\Ol(C_n) \ge 2 + 2 d + \lfloor \frac{n-2}{2(d+1)} -d \rfloor $ with $d = \lfloor \frac{-1 + \sqrt{2n-3}}{2} \rfloor$
and this bound can be better
than the bound $\Ol(C_n)\ge 1 + \lfloor\frac{-1 +  \sqrt{8n+9}}{2} \rfloor$
that is obtained from the second classical construction, first noted by Selfridge.
For example, note that
for every even $n$ with $(t^2+3)/2\le n < (t^2 + t -2)/2$ for some odd $t\in \mathbb{N}$, the former bound
is $t+1$, while the latter (classical) bound is only $t$.

For non-cyclic $G$, it seems that the construction state in the first part of the proposition is typically
better, and possibly never worse, than the one state in the second part.
In view of the fact that the latter contains $\Ol(G'\oplus C_m)$ while the former contains $\Ol(G')$
a precise and general comparison of the two bounds is difficult.

We continue by pointing out that there are also other ways to apply Theorem \ref{thm_abstractbound} for these types of groups.
One other way is to apply Theorem \ref{thm_abstractbound} also
with $H=G'$ and $G/H \cong C_n$, yet with $k_1=0$ and $k_2=2$. This yields
\[\Ol(G)\ge \SD_2(G')+ \SD_{(0,|G'|)}(C_n)-1-\epsilon
\]
with $\epsilon = 1$ if $|G'| \mid \SD_{(0,|G'|)}(C_n)$ and $\epsilon = 0$ otherwise.

This construction can in fact be better. For example for
$C_3 \oplus C_{12}$ it yields $9$, while the other one only yields $8$;
also note that $9$ is the actual value of the Olson constant in this case
(see \cite{sub}).

Yet, for $C_3\oplus C_{18}$ still a different way to apply
Theorem \ref{thm_abstractbound} yields a better bound; namely,
with a subgroup $H\cong C_6$ such that $G/H \cong C_9$ and
$k_1 =k_2=1$---note $\SD_1(C_6)=4$---we get the lower bound $4+8-1=11$.

Using one of these constructions, yields
the exact value of $\Ol(G)$ for all groups of rank two up to order $55$
as computed by Subocz \cite{sub}.

Finally, we point out a case were a potential improvement to Theorem \ref{thm_abstractbound} becomes relevant (cf.~the discussion after that result); again, the construction is originally due to Selfridge.
We first state the construction explicitly and then discuss how this is related to Theorem \ref{thm_abstractbound}.

Let $n=25 k(k+1)/2$ and let $m= 5k(k+1)/2$ with $k \in \mathbb{N}$.
Let $e$ be a generating element of $C_n$, and let $ H = \{ 0, me, 2me, 3me, 4me \} $ the subgroup of order $5$.
We consider the sequence
\[A=(me)^2 (2me) \prod_{j=0}^4 \bigl ( jme + \prod_{i=1}^k ie \bigr ).\]
This is a minimal zero-sum sequence with $\cm(A)= 1$, and thus $\Ol(C_n)\ge |A|= 3 + 5k$;
again this is better than the bound obtained from the classical constructions, which is $2+ 5k$.

This bound is essentially also a particular instance of our construction principle,
namely it corresponds to 
\[ \Ol(C_n) \ge \SD_2(C_5) + \SD_{(0,5)}(C_{n/5}) - 1.\]
Yet, note that we do not get this from Theorem \ref{thm_abstractbound}, since in this case we have $\epsilon = 1$; we cannot rule out that  $5 \mid \SD_{(0,5)}(C_{n/5})$ and in fact the lower bound we use is $5k$. 
However, a more careful analysis shows that in this case the extra argument in the proof of Theorem \ref{thm_abstractbound} that is responsible for $\epsilon = 1$ is not needed. 
As for one, and thus any, non-zero element of $H$, there is a sequence in $\minsequ{0}^5(C_n/H)$ of length $5k$, the lower bound for $\SD_{(0,5)}(C_{n/5})$, such that the sum of the pre-image of this sequence, as constructed in the proof of Theorem \ref{thm_abstractbound}, has this element of $H$ as its sum.

We end this section with some discussion of the Strong Davenport constant.
On the one hand, we can of course obtain explicit lower bounds in the
same way as for $\Ol(G)$ or via the inequality $\SD(G)\ge \Ol(G)-1$
from the bound for $\Ol(G)$.
According to the heuristic we presented in Remark \ref{rem_heuristic} we expect that
often $\SD(G)= \Ol(G)-1$ rather than $\SD(G)=\Ol(G)$.

We elaborate on this point for the two types of groups with large exponent
where $\Ol(G)$ is known, i.e., $C_p$ and $C_p^2$ for prime $p$ (assuming that $p$ is large).
We start by determining $\SD(C_p^2)$; a crucial tool is the
recent solution of the inverse problem associated to
$\Ol(C_p^2)$ for large primes $p$ (cf.~the discussion after Corollary \ref{lr_lem_maintech}).

\begin{thm}
Let $p$ be a prime, and suppose $p>6000$.
Then,
\[\SD(C_p^2)= \Ol(C_p^2)-1.\]
\end{thm}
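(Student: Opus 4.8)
The plan is to combine the trivial upper bound $\SD(C_p^2) \le \Ol(C_p^2)$ (from Lemma~\ref{lr_lem_basic}(3), i.e.\ $\Ol_0 = \SD_1 \le \SD_0 + 1$) with a matching upper bound $\SD(C_p^2) \le \Ol(C_p^2) - 1$, the latter being the real content. To prove $\SD(C_p^2) \le \Ol(C_p^2) - 1$ it suffices to show that no \emph{set} (squarefree sequence) $A \in \minsequ{0}(C_p^2)$ can have length $\Ol(C_p^2) = \SD_1(C_p^2)$; equivalently, any minimal zero-sum sequence of maximal length $\SD_1(C_p^2)$ with $\cm(A) \le 1$ must in fact have $\cm(A) = 1$, i.e.\ must have a repeated element. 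So I would argue by contradiction: assume $A$ is a squarefree minimal zero-sum sequence with $|A| = \Ol(C_p^2)$.

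The key input is the structural description of maximal-length elements of $\minsequ{1}(C_p^2)$ recalled in the paragraph after Corollary~\ref{lr_lem_maintech} (valid for $p > 6000$, due to Bhowmik--Schlage-Puchta and Nguyen--Vu). That description says there is an independent generating set $\{e_1, e_2\}$ with either
\[
A = (e_1 + S)\, e_2^{-1} T, \qquad S \in \sequ{1}(\langle e_2\rangle),\ |S| = p,\ \sigma(S) = e_2,\ T \in \minsequ{1}(\langle e_2\rangle),\ e_2 \mid T,\ |T| = \SD_1(C_p),
\]
or the analogous second form involving $(2e_1 + a e_2)$, a sequence $S \in \sequ{0}(\langle e_2\rangle)$ of length $p-2$, and $T \in \minsequ{2}(\langle e_2\rangle)$ with $e_2 \mid T$ of length $\SD_2(C_p)$. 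In either case one should read off that $A$ is forced to be non-squarefree. In the first form: $T$ is a minimal zero-sum sequence over the cyclic group $\langle e_2 \rangle$ containing $e_2$ as an element of maximal multiplicity, and of length $\SD_1(C_p) > \SD_0(C_p)$ for the relevant primes (or more directly, any element $T$ realizing $\SD_1(C_p)$, by definition of $\SD_1$ versus $\SD_0 = \SD$, has $\cm(T) = 1$, hence a repeated term, unless $\SD_1(C_p) = \SD_0(C_p)$); that repeated term of $T$ survives in $e_2^{-1}T$ and hence in $A$ (the $(e_1+S)$ part lies in a different coset of $\langle e_2\rangle$, so it cannot collide with the $T$-part). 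In the second form, $T \in \minsequ{2}(C_p)$ of length $\SD_2(C_p)$ has $\cm(T) \le 2$; I would check that such a $T$ with $e_2$ of maximal multiplicity and $\cm(T) \le 1$ would give $\SD_1(C_p) \ge |T| = \SD_2(C_p)$, and combined with the case hypothesis $\SD_2(C_p) > \SD_1(C_p)$ this is a contradiction, so $\cm(T) = 2$ and again $A$ has a repeated element. Either way $A \notin \minsequ{0}(C_p^2)$, a contradiction.

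The main obstacle is making rigorous the claim that the cyclic ``tail'' $T$ genuinely forces a repetition that is not cancelled elsewhere in $A$; this needs (i) the elementary fact that an extremal sequence for $\SD_k(C_p)$ with $k \ge 1$ has cumulated multiplicity \emph{exactly} $k$ rather than less whenever $\SD_k(C_p) > \SD_{k-1}(C_p)$ (which follows from the definitions and Lemma~\ref{lr_lem_basic}(3)), together with the explicit knowledge, for $p$ in the stated range, of whether these inequalities are strict — or, to sidestep case distinctions, the observation that in the structure theorem $T$ is prescribed to contain $e_2$ with maximal multiplicity \emph{and} $|T| = \SD_1(C_p)$ resp.\ $\SD_2(C_p)$, and one simply checks directly that $|T|$ exceeds the number of distinct elements available consistently with the constraints; and (ii) the coset/support-disjointness bookkeeping showing the repeated term is not accidentally removed when passing from $T$ to $e_2^{-1}T$ or when forming the product with $(e_1 + S)$. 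Once these two points are nailed down, the contradiction is immediate and, together with $\SD(C_p^2) \le \Ol(C_p^2)$, yields the claimed equality $\SD(C_p^2) = \Ol(C_p^2) - 1$.
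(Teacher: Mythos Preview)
Your overall strategy---reduce to showing that every maximal-length $A\in\minsequ{1}(C_p^2)$ has $\cm(A)=1$, then invoke the Bhowmik--Schlage-Puchta/Nguyen--Vu structure theorem---is exactly what the paper does, and your treatment of the second form is essentially identical to the paper's.

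Your handling of the first form, however, has a genuine gap. You try to locate the forced repetition inside $T$, arguing that $|T|=\SD_1(C_p)$ should force $\cm(T)=1$ and that this repetition ``survives in $e_2^{-1}T$''. Both steps fail. First, $\SD_1(C_p)=\SD_0(C_p)$ does occur for infinitely many large primes (precisely those of the form $k(k+1)/2-2$ or $k(k+1)/2-4$; this is the content of the theorem on $\SD(C_p)$ later in the paper), so you cannot assume $\cm(T)=1$. Second, and more fatally, even when $\cm(T)=1$ the unique repeated element of $T$ is $e_2$ itself, since $e_2$ is stipulated to have maximal multiplicity; removing one copy of $e_2$ therefore makes $e_2^{-1}T$ squarefree, and the repetition does \emph{not} survive. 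Your proposed ``sidestep'' via counting available distinct elements does not help either, since $|T|=\SD_1(C_p)\approx\sqrt{2p}\ll p$.

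The paper's fix is to look at $S$ rather than $T$. In the first form one has $S\in\sequ{1}(\langle e_2\rangle)$ with $|S|=p$ and $\sigma(S)=e_2$. If $\cm(A)=0$, then (by coset disjointness) $\cm(e_1+S)=0$, hence $\cm(S)=0$; but a squarefree sequence of length $p$ over a group of order $p$ must have support equal to the whole group, forcing $\sigma(S)=\sum_{h\in\langle e_2\rangle}h=0$ (cf.\ the remark after Lemma~\ref{lr_lem_sum}), contradicting $\sigma(S)=e_2\neq 0$. That one-line observation replaces your entire first-case argument.
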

\begin{proof}
By Lemma \ref{lr_lem_basic}, it suffices to show that
$\SD(C_p^2)\neq \Ol(C_p^2)=\SD_1(C_p^2)$.
Let $A\in \minsequ{1}(C_p^2)$ with maximal length.
It suffices to show that $\cm(A)=1$ (and not $0$).

As mentioned after Corollary \ref{lr_lem_maintech}, there exists an independent
generating set $\{ e_1, e_2\}$
such that $A= (e_1 + S)e_2^{-1}T$ where $S \in \sequ{1}( \langle e_2 \rangle )$
with $|S|=p$ and $\sigma(S)=e_2$, and
$T\in \minsequ{1}(\langle e_2 \rangle)$ with $e_2\mid T$ (an element of maximal multiplicty) and $|T|=\SD_1(C_p)$,
or $A=(2e_1+ae_2)(e_1+S)e_2^{-1}T$ where
$S \in \sequ{0}( \langle e_2 \rangle )$
with $|S|=p-2$ and $\sigma(S)=(1-a)e_2$, and
$T\in \minsequ{2}(\langle e_2 \rangle)$ with $e_2\mid T$ (an element of maximal multiplicity) and $|T|=\SD_2(C_p)$;
where the latter case only occurs if $\SD_2(C_p)> \SD_1(C_p)$.

Assume first that $A$ is of the former form.
Then $\cm(A)$ cannot be $0$, as this would imply $\cm(S)=0$, which contradicts
$\sigma(S)\neq 0 $ (cf. the remark after Lemma \ref{lr_lem_sum}).

Now, assume $A$ is of the latter form.
If $\cm(A)=0$, it follows that $\cm(T)=1$. Yet, then $\SD_1(C_p)\ge |T|$ and
thus $\SD_2(C_p)=\SD_1(C_p)$, rendering this case obsolete.
\end{proof}
The condition $p>6000$ stems directly from the result of Bhowmik and
Schlage-Puchta \cite{bhowmik}, in case an analogous assertion should hold, which is likely,
without that assumption, then we could drop this assumption here as well.
Only note, that $p=2$ is a special case---see the remark
after Corollary \ref{lr_lem_maintech}---and $\SD(C_2^2)=\Ol(C_2^2)$.

In the case of cyclic groups,
even prime cyclic groups, the problem is more complicated.
Indeed, whether $\SD(C_p)$ is equal to $\Ol(C_p)$ or $\Ol(C_p)-1$,
varies with $p$, as we see below. Though, typically $\SD(C_p)$ equals  $\Ol(C_p)-1$.
This is in line with the heuristic mentioned in Remark \ref{rem_heuristic}, that it is likely that $\SD(C_p)$ equals  $\Ol(C_p)-1$,
in view of the fact that $\Ol(C_p)$ and $\Dav(C_p)$ are far apart.

Our argument below crucially relies on the recent solution of the inverse problem
associated to $\Ol(C_p)$ in the detailed form given by Deshouillers and Prakash \cite{Deshouillers}, and
also see \cite{vanvu}; the fact that such results are only known for sufficiently
large primes, is the main reason we have to impose this condition.
Yet, there actually are some isolated phenomena for very small primes, on which we comment after the proof.

\begin{thm}
Let $p$ be a sufficiently large prime.
If $p=k(k+1)/2 -2$ or $p=k(k+1)/2 -4$, for some $k \in \mathbb{N}$, then $\SD(C_p)=\Ol(C_p)$.
Otherwise $\SD(C_p)= \Ol(C_p)-1$.
\end{thm}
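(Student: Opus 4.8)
The plan is to reduce everything to the known value $\Ol(C_p) = \SD_1(C_p)$ together with its inverse result (Deshouillers--Prakash), much as in the proof for $C_p^2$. By Lemma~\ref{lr_lem_basic}(3) we always have $\SD(C_p) = \SD_0(C_p) \ge \Ol(C_p) - 1 = \SD_1(C_p) - 1$, and $\SD_0(C_p) \le \SD_1(C_p)$. So the whole question is whether there exists a minimal zero-sum sequence $A \in \minsequ{}(C_p)$ of maximal length $\Ol(C_p)$ that is actually \emph{squarefree}, i.e. with $\cm(A) = 0$ (equivalently $\mathsf{h}(A) = 1$). Writing such an $A$ as $A = (-\sigma(S))S$ where $S$ is a zero-sumfree sequence of length $\Ol(C_p) - 1 = \ol_1(C_p)$, the condition $\cm(A) = 0$ forces $\cm(S) = 0$ \emph{and} that $-\sigma(S)$ is not already a term of $S$. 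So I would phrase the goal as: decide for which $p$ there is an extremal zero-sumfree \emph{set} $S$ (cardinality $\ol_1(C_p) = \Ol(C_p)-1$, noting by Lemma~\ref{lr_lem_basic} this equals $\ol_0(C_p)$ precisely when such a set exists) whose negated sum $-\sigma(S)$ lies outside $S$.

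Next I would bring in the inverse theorem for $\Ol(C_p)$: for large $p$, every zero-sumfree set $S \subseteq C_p$ of maximal cardinality is, up to multiplication by a unit of $\zz/p\zz$, essentially one of the two classical extremal configurations recalled in Section~\ref{sec_large} — an interval-type set $\{e, 2e, \dots, ke\}$ with $\sum_{i=1}^k i \le p-1$ (and $k$ as large as possible), or the modified set $\{-2e, e, 3e, 4e, \dots, ke\}$ with $\sum_{i=1}^k i \le p+1$. I would then compute $-\sigma(S)$ in each case. For the interval set, $\sigma(S) = \binom{k+1}{2} e$, so $-\sigma(S) = -\binom{k+1}{2}e = (p - \binom{k+1}{2})e$; this lies in $S = \{1,\dots,k\}e$ exactly when $1 \le p - \binom{k+1}{2} \le k$, i.e. when $\binom{k+1}{2} \le p - 1$ and $p \le \binom{k+1}{2} + k = \binom{k+2}{2} - 1$. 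Because $k$ is chosen maximal subject to $\binom{k+1}{2} \le p-1$, these inequalities are automatically satisfied for essentially all $p$ — so the interval construction never yields a squarefree extremal minimal zero-sum sequence, except in the narrow window where $p$ is just large enough that the next triangular number would fit; that exceptional window is exactly $p = \binom{k+1}{2} - 2$ (and an analysis of the boundary arithmetic will pin down precisely which offsets $-2$, and via the second construction $-4$, escape the interval). For the modified set the same bookkeeping applies with $p+1$ in place of $p-1$ and support $\{-2,1,3,4,\dots,k\}e$, shifting the exceptional residue by $2$ and producing the $p = k(k+1)/2 - 4$ case. The key arithmetic lemma is: $-\sigma(S) \notin S$ iff $p$ avoids the relevant triangular-number band, and the only values of $p$ for which \emph{every} unit-translate of \emph{both} extremal shapes has $-\sigma(S)$ outside are $p \equiv -2$ or $-4$ below the appropriate triangular number.

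The key steps in order: (i) reduce $\SD(C_p) = \Ol(C_p)$ to the existence of a squarefree extremal minimal zero-sum sequence, hence to an extremal zero-sumfree set $S$ with $-\sigma(S) \notin S$; (ii) invoke the Deshouillers--Prakash inverse theorem to list the finitely many shapes of extremal zero-sumfree sets for large $p$; (iii) for each shape, compute $\sigma(S)$ explicitly and determine, as a function of $p$, whether $-\sigma(S) \in S$ — being careful that multiplying $S$ by a unit $u$ multiplies $\sigma(S)$ by $u$ too, so membership is unit-invariant and one may fix $u = 1$; (iv) assemble the arithmetic to show $-\sigma(S) \in S$ for \emph{all} extremal $S$ precisely when $p \ne k(k+1)/2 - 2, k(k+1)/2 - 4$, and conversely exhibit a squarefree extremal $A$ in those two families. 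I expect step (iii)--(iv) — the exact triangular-number bookkeeping, including correctly handling the maximality of $k$ and the slightly different length bound ($p-1$ vs.\ $p+1$) for the two constructions, and checking that no \emph{other} extremal shape sneaks in for large $p$ — to be the main obstacle; the reduction in step (i) and the appeal to the inverse theorem in step (ii) are routine given the tools already assembled in the paper.
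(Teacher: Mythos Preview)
Your overall strategy matches the paper's: reduce the question to whether some extremal zero-sumfree set $S$ has $-\sigma(S)\notin\supp(S)$, then invoke the Deshouillers--Prakash classification and analyze cases. However, there is a genuine gap in step~(ii). The inverse theorem does \emph{not} say that an extremal zero-sumfree set is, up to a unit, one of the two classical shapes $\{1,\dots,k\}e$ or $\{-2,1,3,\dots,k\}e$. What it actually provides (in the form used in the paper, from \cite{Deshouillers}, Theorem~27 and Table~1) is that for a suitable generator $e$ one has $S=S'S''$ where $S'$ consists of at most two terms $j'e$ with $j'\in[-4,-1]$, and $S''$ consists of terms $je$ with $j\in[1,p/2]$ and $\sum j\le p+2$. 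This is a parametrized family of shapes, and the paper's proof runs through seven cases (with further subcases according to $s''=\sum j$) to dispose of them; several cases are handled by replacing $S$ with $(-\sigma(S))S''$ or a close variant and reducing to an earlier case, a trick that has no counterpart in your two-template scheme.

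In particular, your arithmetic in~(iii) is aimed at the wrong targets. For the classical interval $\{1,\dots,k\}e$ you correctly find $-\sigma(S)\in S$ for all large $p$, so that shape \emph{never} witnesses $\SD=\Ol$; the ``narrow window'' you describe does not exist for this shape. The sets that actually do witness equality when $p=k(k+1)/2-2$ or $k(k+1)/2-4$ are different extremal shapes permitted by the inverse theorem: in the paper's Case~1.3 ($S'$ empty, $s''=p+1$) one gets $S=\{2,3,\dots,\ell+1\}e$ with $-\sigma(S)=-e\notin S$, and Case~1.4 ($s''=p+2$) gives $S=\{1,3,4,\dots\}e$ with $-\sigma(S)=-2e\notin S$. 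These are neither of your two templates. So both directions of the argument---exhibiting the witnessing sets in the exceptional cases, and ruling out \emph{all} extremal shapes otherwise---require the full case analysis over the Deshouillers--Prakash structure, not just the two classical constructions.
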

It is widely believed that infinitely many primes of the form $k(k+1)/2 -2$ and $k(k+1)/2 -4$ exist,
as there is no `obvious' reason for the respective polynomial not to take a
prime value infinitely often (in particular, they are irreducible over $\mathbb{Q}$); we checked that there are many.

\begin{proof}
Let $S\in \freesequ{0}(C_p)$ with $|S|=\Ol(C_p)-1$.
If we can assert that $-\sigma(S)\mid S$, then $\SD(C_p)< \Ol(C_p)$.
Assume that $-\sigma(S)\nmid S$.

We distinguish various cases according to the structure of $S$ as described in \cite{Deshouillers}
(in particular, see Theorem 27 and Table 1).
According to this result, there exists a (unique) generating element $e$ of $C_p$ such that
$S=S' S''$ where
$S' = \prod_{i=1}^{\ell'} (j_i' e)$ with $\ell' \le 2$ and $j_i' \in [-4, -1]$, and
$S'' = \prod_{i=1}^{\ell} (j_i e)$ with $j_i \in [1, p/2]$ and  $\sum_{i=1}^{\ell}j_i \le p+2$; we assume $j_i<j_{i+1}$. Following \cite{Deshouillers} we write $s''$ for $\sum_{i=1}^{\ell}j_i$.

We discussed the various cases that can arise according to this classification.

\noindent
Case 1. $|S'|=0$.\\
Case 1.1. $s'' \le p-1$. Without restriction assume that $s''$ is minimal (among all $S$ in this case, fulfilling $-\sigma(S)\nmid S$).
This minimality assumption implies that $j_{\ell +1} = p - s'' > j_{\ell}$, otherwise we could
consider $(j_{\ell+1}e) \prod_{i=1}^{\ell-1} (j_i e)$ and violate the minimality assumption.
Moreover, it follows that $j_{i}=i$ for each $i$, since otherwise
 replacing $j_i$ by $j_i - 1$ for a suitable $i$ would yield a zero-sum free set, contradicting the maximality
of $S$.
Yet, this implies that $p = (\ell+1)(\ell+2)/2$, which is not the case for sufficiently large $p$.

\noindent
Case 1.2.  $s'' = p$. This is clearly impossible.

\noindent
Case 1.3. $s'' = p+1$. It follows that $e \nmid S$.
We have $p+2 = 1 + s'' \ge (\ell+1)(\ell+2)/2 > p-1$.
Since for sufficiently large $p$, we have that $(\ell+1)(\ell+2)/2$ is neither $p$ nor $p+1$, as the arising polynomials are reducible (see \cite{eric}),
it follows that $p+2 = (\ell+1)(\ell+2)/2$.
So, we get that $S=S''=\prod_{i=1}^{\ell} (i+1)e$ and $-\sigma(S)=-e$.
Note that for $p+2= (\ell+1)(\ell+2)/2$, we indeed have $\Ol(C_p)= \ell + 1$.

\noindent
Case 1.4. $s'' = p+2$. It follows that $2e \nmid S$.
We have $p+4 = 2 + s'' \ge (\ell+1)(\ell+2)/2 > p-1$.
Again, for sufficiently large $p$, $(\ell+1)(\ell+2)/2$ is neither $p$, $p+1$, nor $p+3$,  
and the case $p+2$ was already settled 
it follows that $p+4 = (\ell+1)(\ell+2)/2$.
So, we get that $S=S''=e\prod_{i=3}^{\ell} ie$ and $-\sigma(S)=-2e$.
Note that for $p+4= (\ell+1)(\ell+2)/2$, we indeed have $\Ol(C_p)= \ell + 1$.

\noindent
Case 2. $S'=(-e)$ and $s'' \le p -1$.
We consider $(-\sigma(S))S''$, and are in Case 1.

\noindent
Case 3. $S'=(-e)$ and $s'' \in \{ p , p+1,p+2\}$ is impossible; for $p$ and $p+1$ this is obvious, and
for $p+2$ note that $-\sigma(S)= (-e)$.

\noindent
Case 4. $S'=(-2e)$, and $s'' \le p -1$ or $s'' = p +1$.  Considering $(-\sigma(S))S''$, we are in  Case 1.

\noindent
Case 5. $S'=(-2e)$ and $s'' \in \{ p , p+2 \}$. This is impossible.

\noindent
Case 6. $S'=(-3e)$ or $S'=(-4e)$.  We would get that $(-\sigma(S))S''$ violates the condition $s'' \le p+2$ (where
$s''$ now is computed for $(-\sigma(S))S''$).

\noindent
Case 7. $S'=(-2e)(-e)$ or  $S'=(-3e)(-e)$. We would get that $(-\sigma(S))S''(-e)$ violates the condition $s'' \le p+2$
(now with $s''$ for $(-\sigma(S))S''(-e)$).
\end{proof}
In the proof we used that there are no sufficiently large primes of the form
$k(k+1)/2$, $k(k+1)/2 -1$, and $k(k+1)/2 -3$. However, there are some primes of this form, namely $2$, $3$, $5$, and $7$.
For these direct inspection shows that
$1=\SD(C_2)<\Ol(C_2)=2$, $\SD(C_3)=\Ol(C_3)=2$, $2=\SD(C_5)<\Ol(C_5)=3$, and
$3=\SD(C_7)<\Ol(C_7)=4$. A reason why, say, $\SD(C_7)<\Ol(C_7)$,
while our proof (see Case 1.4)  suggests that $\SD(C_p)=\Ol(C_p)$ for primes of the form $k(k+1)/2 -3$ is the fact that $7$ is so small that the
construction $(-2e)e(3e)(5e)... (ke)$ (with $k=5$ is this case),
does not yield a set.
However, for the next prime $11$, we already have $\SD(C_{11})=\Ol(C_{11})=5$;
where the lower bound is given by $(-2e)e(3e)(4e)(5e)$.

\section{Refined bounds}
\label{sec_refined}

In this section, we present two more specialized constructions
that allow to improve, in certain cases, the results established
in Section \ref{sec_basic}. In particular,
these constructions are applicable in the important case of homocyclic groups.
Indeed, we focus on this case; yet, we formulate our technical results in more generality.

Informally, the idea is similar to the one presented in Section \ref{sec_basic}, in particular see Corollary \ref{lr_lem_maintech};
yet, rather than `adding' only a cyclic component, we `add'
groups of rank two and three, resp., which can yield better results
than `adding' a cyclic component two times or three times, resp.

At first glance, the improvements might seem minimal, and perhaps not even
worth the additional effort. Yet, as we detail at the end of this section and
in Section \ref{sec_small}, this small improvement is of significance (perhaps it is even crucial).

We start with the result `adding' three cyclic components at once;
though it is more complex on a technical level, it is more
in line with the already explored construction from a conceptual point of view.

\begin{pro}
\label{pro_cn4}
Let $G$ be a finite abelian group with $\exp(G)\ge 3$, $k \in \mathbb{N}_0 \cup \{\infty\}$,
and $n_1,n_2,n_3 \in \mathbb{N}\setminus \{1,2\}$ with $n_i \le |G|+1$.
Then
\[
\SD_k(G \oplus C_{n_1}\oplus C_{n_2}\oplus C_{n_3})\ge \SD_{k+3}(G)+(n_1-1)+(n_2-1)+(n_3-1).
\]
\end{pro}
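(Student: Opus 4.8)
The plan is to apply Theorem \ref{thm_abstractbound} with the subgroup $H \cong C_{n_1} \oplus C_{n_2} \oplus C_{n_3}$, so that $G \oplus C_{n_1} \oplus C_{n_2} \oplus C_{n_3}$ modulo $H$ is $G$. We use the parameters $k_1 = 3$ and $k_2 = k+1$; then $k_1 + \max\{0, k_2 - 1\} = 3 + k$ for $k \geq 0$ (and the $\infty$ case is trivial), which matches the left-hand side of the claimed inequality. Since $k_1 = 3 > 0$, the error term $\epsilon$ in Theorem \ref{thm_abstractbound} is zero, so what we obtain is
\[
\SD_{k+3}(G \oplus C_{n_1} \oplus C_{n_2} \oplus C_{n_3}) \ge \SD_{(3, |H|)}(G) + \SD_{k+1}(H) - 1.
\]
To finish, I need two ingredients: a lower bound $\SD_{(3,|H|)}(G) \ge \SD_{k+3}(G) + \text{(something)}$, and a lower bound on $\SD_{k+1}(H) = \SD_{k+1}(C_{n_1} \oplus C_{n_2} \oplus C_{n_3})$. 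Wait — this split does not obviously work, because I want the $\SD_{k+3}(G)$ on $G$'s side, not constrained by a level-$|H|$ cumulated multiplicity.

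The better route is to reverse the roles: take $H \cong G$ sitting inside $G' := G \oplus C_{n_1} \oplus C_{n_2} \oplus C_{n_3}$, so that $G'/H \cong C_{n_1} \oplus C_{n_2} \oplus C_{n_3}$. Apply Theorem \ref{thm_abstractbound} with $k_1 = 0$, $k_2 = k+3$. Hmm, but then $\epsilon$ may be $1$. Instead I will build the construction directly, mimicking the proof of Corollary \ref{lr_lem_maintech} but one level deeper, or — cleaner — apply Theorem \ref{thm_abstractbound} with $H \cong G$, $G'/H \cong C_{n_1}\oplus C_{n_2}\oplus C_{n_3}$, $k_1 = 0$, $k_2 = k+3$, which gives $\SD_{0 + \max\{0, k+2\}}(G') \ge \SD_{(0, |G|)}(C_{n_1}\oplus C_{n_2}\oplus C_{n_3}) + \SD_{k+3}(G) - 1 - \epsilon$. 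The exponent on the left is $k+2$ if $k \ge 0$, which is one short of $k+3$; that is the wrong bound. So I will in fact use $k_2 = k+3$ and also perturb $k_1$ to absorb the multiplicities: take $k_1$ such that $k_1 + \max\{0, k+2\} = k+3$, i.e. $k_1 = 1$. With $k_1 = 1 > 0$ we have $\epsilon = 0$, giving
\[
\SD_{k+3}(G') \ge \SD_{(1, |G|)}(C_{n_1} \oplus C_{n_2} \oplus C_{n_3}) + \SD_{k+3}(G) - 1.
\]
Hmm, still not matching: I want $\SD_{k+3}(G)$ with the cumulated multiplicities of the $C_{n_i}$ part contributing $(n_1-1)+(n_2-1)+(n_3-1)$, so I need $\SD_{(1,|G|)}(C_{n_1}\oplus C_{n_2}\oplus C_{n_3}) \ge n_1 + n_2 + n_3 - 2$.

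So the key step is: \emph{exhibit a minimal zero-sum sequence $S$ over $C_{n_1}\oplus C_{n_2}\oplus C_{n_3}$ with $|S| = (n_1-1)+(n_2-1)+(n_3-1)+1 = n_1+n_2+n_3-2$ and $\cm_{|G|}(S) \le 1$.} Let $f_1, f_2, f_3$ be the standard generators. The natural candidate is $S = f_1^{n_1-1} f_2^{n_2-1} f_3^{n_3-1} \cdot (-\sigma_0)$ where $\sigma_0 = (n_1-1)f_1 + (n_2-1)f_2 + (n_3-1)f_3$, i.e. appending one more element to make the sum zero — but one must check this last element is genuinely new (its $f_i$-coordinate is $-(n_i-1) = 1 \pmod{n_i}$ for each $i$, so it equals $f_1+f_2+f_3$, which is distinct from all the $f_i^{m}$ blocks), and that no proper subsequence sums to zero (a subsequence uses $a_i$ copies of $f_i$ and possibly the extra element; the $f_i$-coordinate is then $a_i$ or $a_i+1$, forced to be $0 \bmod n_i$, and checking the three components rules out all proper sub-sums — here the hypothesis $n_i \ge 3$ matters, paralleling the exception in Lemma \ref{lr_lem_sum}, and $n_i \le |G|+1$ guarantees $\cm_{|G|}$ of the $f_i$-blocks is $\le \max\{0, n_i - 1 - |G|\} \le 0$ each, so only the repeated heights exceeding $|G|$ would count, and none do, hence $\cm_{|G|}(S) \le 1$ coming at worst from... actually each block $f_i^{n_i-1}$ has $\cm_{|G|} = \max\{0, (n_i-1)-|G|\} = 0$ since $n_i - 1 \le |G|$, so $\cm_{|G|}(S) = 0 \le 1$). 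This gives $\SD_{(1,|G|)}(C_{n_1}\oplus C_{n_2}\oplus C_{n_3}) \ge n_1+n_2+n_3-2$, and combining with the displayed inequality above yields the proposition.

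The main obstacle is verifying that the explicitly constructed sequence $S$ is genuinely a \emph{minimal} zero-sum sequence — i.e. that no proper non-empty subsequence has zero sum — which is where the hypothesis $n_i \notin \{1,2\}$ is essential and must be used carefully (an argument of the flavour of Lemma \ref{lr_lem_sum}, coordinate by coordinate). A secondary, routine point is bookkeeping the parameters so that $k_1 + \max\{0, k_2-1\}$ lands exactly on $k+3$ while keeping $k_1 > 0$ to force $\epsilon = 0$; I should also dispose of the case $k = \infty$ separately (there $\SD_{\infty} = \Dav$ and the inequality reduces to the classical $\Dav(G \oplus H) \ge \Dav(G) + \Dav(H) - 1$ applied with $\Dav(C_{n_1}\oplus C_{n_2}\oplus C_{n_3}) \ge n_1+n_2+n_3-2$, which follows from the same explicit $S$).
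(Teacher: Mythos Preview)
Your approach has a fatal index-bookkeeping error. You want to prove $\SD_k(G')\ge \SD_{k+3}(G)+\sum(n_i-1)$ where $G'=G\oplus C_{n_1}\oplus C_{n_2}\oplus C_{n_3}$, but every application of Theorem~\ref{thm_abstractbound} you try lands on $\SD_{k+3}(G')$ (or at best $\SD_{k+2}(G')$) on the left, not $\SD_k(G')$. Concretely, with $H\cong G$ and $k_1=1$, $k_2=k+3$ you get $k_1+\max\{0,k_2-1\}=k+3$, and with $k_1=0$ you get $k+2$; to hit $k$ on the left you would need $k_1+\max\{0,k_2-1\}=k$ while $k_2=k+3$, forcing $k_1=-2$. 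The structure of Theorem~\ref{thm_abstractbound} removes only \emph{one} element of maximal multiplicity from the $H$-sequence, lowering the cumulated multiplicity by $1$; it cannot lower it by $3$ in one shot. Your sentence ``which matches the left-hand side of the claimed inequality'' is simply wrong: the left-hand side of the proposition is $\SD_k$, not $\SD_{k+3}$.

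Nor can this be repaired by iterating Corollary~\ref{lr_lem_maintech} three times: the last step, splitting off $C_{n_3}$ from $G\oplus C_{n_3}$, has $\delta=0$ only when $n_3\le |G|-1$, whereas the proposition allows $n_3\le |G|+1$. This gap of $2$ is precisely the content of the proposition and is the point of Section~\ref{sec_refined}. The paper's proof is an explicit construction: starting from $A\in\minsequ{k+3}(G)$ of maximal length, one removes three terms $g_1g_2g_3$ (dropping $\cm$ to $\le k$) and adjoins a hand-built sequence $F$ of $n_1+n_2+n_3$ pairwise distinct elements lying in nontrivial $\langle e_1,e_2,e_3\rangle$-cosets, with $\sigma(F)=g_1+g_2+g_3$; the real work is a case analysis showing that $(g_1g_2g_3)^{-1}AF$ is a \emph{minimal} zero-sum sequence, which hinges on classifying all minimal zero-sum subsequences of a specific six-element ``gadget'' $(e_1\pm e_3)(\pm e_1+e_2)(\pm e_2+e_3)$ together with the blocks $e_i^{n_i-2}$. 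Your candidate $f_1^{n_1-1}f_2^{n_2-1}f_3^{n_3-1}(f_1+f_2+f_3)$ is a perfectly good minimal zero-sum sequence over $C_{n_1}\oplus C_{n_2}\oplus C_{n_3}$, but it only feeds Theorem~\ref{thm_abstractbound}, which, as explained, cannot deliver the required index.
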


\begin{proof}
Let $\{e_1,e_2,e_3\}$ be an independent generating set of $C_{n_1}\oplus C_{n_2}\oplus C_{n_3}$ with $\ord(e_i)=n_i$ for each $i$.
Let $\pi_i: G \oplus \langle e_1,e_2,e_3\rangle \to \langle e_i\rangle$, for $i \in [1,3]$ and $\pi: G \oplus \langle e_1,e_2,e_3\rangle \to \langle e_1,e_2,e_3\rangle$ denote the standard epimorphism (subject to this decomposition and generating set).
Let $A\in \minsequ{k+3}(G)$ with $|A|\ge 3$; note that by our assumption on
$G$ such an $A$ always exists.
Let $g_1g_2g_3\mid A$ such that
$\cm((g_1g_2g_3)^{-1}A)\le k$.

Let $S_1,S_2,S_3 \in \sequ{0}(G)$ with $|S_i|=n_i-2$ and
$\sigma(S_1) = g_1$, $\sigma(S_2)= -g_1-g_2$, and
$\sigma(S_3) = -g_1 - g_2 + g_3$;
these exists by Lemma \ref{lr_lem_sum}, since $n_i-2\le |G|-1$ and $G$ is not an
elementary $2$-group.

Let
\[ \begin{split}F=& (e_1+S_1)(e_2+S_2)(e_3+S_3)(e_1+e_3)(e_1-e_3)\\ & (e_1+e_2+g_2)(-e_1+e_2+g_1+g_2)(e_2+e_3+g_1+g_2)(-e_2+e_3).
\end{split}
\]
We note that $|F|=n_1+n_2+n_3$ and $\sigma(F)=g_1+g_2+g_3$.
Thus,
\[(g_1g_2g_3)^{-1}AF\]
has sum $0$, length $|A|-3 + n_1 + n_2 + n_3$, and its cumulative multiplicity is at most $k$.
To establish our claim it remains to show that $(g_1g_2g_3)^{-1}AF$
is a minimal zero-sum sequence.
We observe that to show this it suffices to show that
for each $1\neq B \mid F$ with $\sigma(\pi(B))=0$, we have
$\sigma(B)\in \Sigma(g_1g_2g_3)$, and $\sigma(B)=g_1+g_2+g_3$ if and only if
$B=F$. Note that, since $g_1g_2g_3$ is a subsequence of a minimal zero-sum sequence, we have that $\sigma(T)\neq g_1+g_2+g_3$ for each non-empty and proper subsequence of $g_1g_2g_3$.

We first determine all minimal zero-sum subsequences of
\[\pi(F) =
e_1^{n_1-2} e_2^{n_2-2} e_3^{n_3-2}(e_1+e_3)
(e_1-e_3)(e_1+e_2)(-e_1+e_2)(e_2+e_3)(-e_2+e_3).\]
Let $C\mid \pi(F)$ be a minimal zero-sum subsequence, and let
 $C_+$, $C_-$ denote the subsequence of elements of the form $e_i+e_j$ and $e_i-e_j$ (where $i \neq j$), respectively.
We note that $C_+C_-$ is non-empty. Moreover, it follows that
\begin{equation}
\label{eq_cn4}
\pi_i(\sigma(C_+C_-))\neq e_i \text{ for } i\in [1,3].
\end{equation}

We distinguish cases according to $|C_-|$. Throughout, the argument
below let $i,j,k$ be such that $\{i,j,k\}=\{1,2,3\}$; note that
not for each choice of $i,j,k$ all sequences below exist.

\noindent
Case 0: $|C_-|=0$. Then, by \eqref{eq_cn4}, $|C_+|=3$ and
$C=e_1^{n_1-2} e_2^{n_2-2} e_3^{n_3-2}(e_1+e_3)(e_1+e_2)(e_2+e_3)=C_0$.

\noindent
Case 1: $|C_-|=1$. For $C_-=-e_i+e_j$, we get that $e_i+e_j\mid C_+$ or $e_j+e_k\mid C_+$. In the former case, it follows that $C=(-e_i+e_j)(e_i+e_j)e_j^{n_j-2}=C_1^j$.
In the latter case, it follows that $C=(-e_i+e_j)(e_j+e_k)(e_k+e_i)e_j^{n_j-2}e_k^{n_k-2}=C_1^{j,k}$.

\noindent
Case 2: $|C_-|=2$. We have $C_-=(-e_i+e_j)(-e_j+e_k)$. It follows that,
to fulfill \eqref{eq_cn4} for $k$,
$e_k+e_i\mid C_+$ or $e_k+e_j\mid C_+$.

In the latter case,  it follows that $e_j+e_i\mid C_+$, to fulfill \eqref{eq_cn4} for $j$; and $e_i +e_k\nmid C_+$. Thus we get
$C = (-e_i + e_j) (-e_j + e_k) (e_k + e_j) (e_j + e_i) e_j^{n_j-2} e_k^{n_k-2}=C_2^{j,k}$.
While, in the former case it follows that
$C= (-e_i+e_j)(-e_j+e_k)(e_i+e_k)e_k^{n_k-2}=C_2^k$.

\noindent
Case 3: $|C_-|=3$. We have $C_- = (-e_1+e_2) (-e_2+e_3) (e_1-e_3)$. This is
a zero-sum sequence. So, $C= C_-=C_3$.

This completes our classification of minimal zero-sum sequences.

Let $1\neq B \mid F$ and $B \neq F$ such that $\pi(B)$ is a zero-sum sequence.
We assert that $\pi(B)$ or $\pi(B^{-1}F)$ is a minimal zero-sum sequence.
Assume not. Then $\pi(F)=A_1\dots A_m$  with minimal zero-sum sequences
$A_i$ where $m \ge 4$. Since each $A_i$ is either equal to $C_0$
or contains at least one element of $C_-$, it follows in view of $|C_-|=3$ that, say, $A_1=C_0$. Yet, this implies that $C_3=A_1^{-1}\pi(F)=A_2 \dots A_m$, a contradiction.

Thus, since $\sigma(F)=g_1+g_2+g_3$ and in view of the comments above, in order to show that
for each $1 \neq B \mid F$ with $\sigma(\pi(B))=0$ and $B \neq F$, we have
$\sigma(B) \in \Sigma(g_1g_2g_3) \setminus \{ g_1 + g_2 + g_3 \}$, it suffices to show that $\sigma(D) \in \{ g_1, g_2, g_3, g_1 + g_2, g_1 + g_3, g_2 + g_3 \}$ for each $D\mid F$ such that $\pi(D)$ is a minimal zero-sum sequence.

We observe that for each minimal zero-sum sequence of $\pi(F)$
in view of our classification (note that each such sequence
 contains $e_i$ either with multiplicity $n_i-2$ or $0$)
there exist a unique subsequence of $F$ whose image under $\pi$ is this minimal
zero-sum subsequence.

Hence, it suffices to check explicitly the condition
$\sigma(D)\in \{g_1,g_2,g_3, g_1+g_2,g_1+g_3 ,g_2+g_3\}$
for the unique preimage of each of the minimal zero-sum
sequences of $\pi(F)$ that we determined explicitly.

We omit the details of  this routine computation.
\end{proof}

We continue with the result `adding' two cyclic components.

\begin{pro}
\label{pro_cn3}
Let $G$ be a finite abelian group with $\exp(G)\ge 3$, $k \in \mathbb{N}_0 \cup \{\infty\}$,
and $n_1,n_2 \in \mathbb{N}\setminus \{1,2\}$ with $n_i \le |G|+1$.
Then
\[
\SD_k(G \oplus C_{n_1}\oplus C_{n_2})\ge \SD_{k+2}^{2}(G)+(n_1-1)+(n_2-1)
\]
where $\SD_{k+2}^{2}(G)$ is the maximum length of a sequence $A\in \minsequ{k+2}(G)$ such that there exist distinct $g,h\in G$ such that $\cm((gh)^{-1}A)\le k$.\end{pro}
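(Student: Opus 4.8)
The plan is to run the argument of Proposition~\ref{pro_cn4}, but adding only two cyclic components, the refinement built into $\SD_{k+2}^{2}(G)$ taking over the role that the multiplicities of three chosen elements played there. First I would fix an independent generating set $\{e_1,e_2\}$ of $C_{n_1}\oplus C_{n_2}$ with $\ord(e_i)=n_i$ and write $\pi_i\colon G\oplus\langle e_1,e_2\rangle\to\langle e_i\rangle$ and $\pi\colon G\oplus\langle e_1,e_2\rangle\to\langle e_1,e_2\rangle$ for the standard projections. Choose $A\in\minsequ{k+2}(G)$ of length $\SD_{k+2}^{2}(G)$ together with distinct $g,h\mid A$ satisfying $\cm((gh)^{-1}A)\le k$ (such an $A$ exists because $\exp(G)\ge 3$; indeed, for $e$ of maximal order the minimal zero-sum sequence $e\cdot\bigl((\ord(e)-1)e\bigr)$ has two distinct terms and cumulated multiplicity $0$, so already $\SD_{k+2}^{2}(G)\ge 2$). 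Since $1\le n_i-2\le |G|-1$ and $G$ is not an elementary $2$-group, Lemma~\ref{lr_lem_sum} yields $S_1,S_2\in\sequ{0}(G)$ with $|S_1|=n_1-2$, $\sigma(S_1)=0$, and $|S_2|=n_2-2$, $\sigma(S_2)=g$. I then set
\[
F=(e_1+S_1)(e_2+S_2)(e_1+e_2)(e_1+e_2+h-g)(e_1-e_2+g)(-e_1+e_2).
\]
One has $|F|=n_1+n_2$; a direct computation, in which the $e_1$- and $e_2$-coordinates telescope to $n_1e_1=0$ and $n_2e_2=0$, gives $\sigma(F)=g+h$; and $\cm(F)=0$, because the terms of $(e_1+S_1)$ and $(e_2+S_2)$ are pairwise distinct and distinct from the four connectors, while the four connectors are pairwise distinct — the two connectors with $\pi$-image $e_1+e_2$ differ precisely because $g\neq h$, and the remaining distinctions use $n_1,n_2\ge 3$.

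Next I would put $A^{\ast}=(gh)^{-1}A\cdot F$. Since $\supp((gh)^{-1}A)\subseteq G$ is disjoint from $\supp(F)$ we get $\cm(A^{\ast})=\cm((gh)^{-1}A)+\cm(F)\le k$, and $|A^{\ast}|=\SD_{k+2}^{2}(G)-2+n_1+n_2=\SD_{k+2}^{2}(G)+(n_1-1)+(n_2-1)$, which is the stated bound; so everything comes down to showing that $A^{\ast}$ is a minimal zero-sum sequence. As $\sigma(A^{\ast})=\sigma(A)-g-h+\sigma(F)=0$, minimality reduces — exactly as for Proposition~\ref{pro_cn4} — to the claim that every $1\neq B\mid F$ with $\sigma(\pi(B))=0$ satisfies $\sigma(B)\in\Sigma(gh)=\{g,h,g+h\}$, with $\sigma(B)=g+h$ if and only if $B=F$. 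Indeed, a hypothetical nonempty proper zero-sum subsequence $C=C_AC_F$ of $A^{\ast}$ (with $C_A\mid(gh)^{-1}A$, $C_F\mid F$) forces $\sigma(\pi(C_F))=0$; the cases $C_F=1$ and $C_F=F$ contradict the fact that $(gh)^{-1}A$ is zero-sumfree, and if $1\neq C_F\neq F$ then $\sigma(C_F)\in\{g,h\}$, say $\sigma(C_F)=g$, whence $gC_A$ is a nonempty zero-sum subsequence of $A$ and therefore $gC_A=A$; but then $C_A=g^{-1}A$ contains $h$ with multiplicity $\mathsf{v}_h(A)$, contradicting $C_A\mid(gh)^{-1}A$ — and it is here that the distinctness of $g$ and $h$ enters a second time.

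To establish the displayed condition I would classify the minimal zero-sum subsequences of
\[
\pi(F)=e_1^{\,n_1-2}e_2^{\,n_2-2}(e_1+e_2)^2(e_1-e_2)(-e_1+e_2).
\]
A short case analysis — according to the multiplicities of $e_1-e_2$ and $-e_1+e_2$, using that the $e_i$-coordinate of a zero-sum subsequence of $\pi(F)$ must equal $0$ or $n_i$ — shows there are exactly four, namely $e_1^{\,n_1-2}(e_1+e_2)(e_1-e_2)$, $e_2^{\,n_2-2}(e_1+e_2)(-e_1+e_2)$, $(e_1-e_2)(-e_1+e_2)$ and $e_1^{\,n_1-2}e_2^{\,n_2-2}(e_1+e_2)^2$, and that, besides $1$ and $\pi(F)$ itself, these are the only zero-sum subsequences of $\pi(F)$ (any other product of these four has a coordinate exceeding its multiplicity in $\pi(F)$). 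The third and fourth lift uniquely to $F$, the first and second lift in exactly two ways each (the choice of one of the two $(e_1+e_2)$-connectors), and $\pi(F)$ lifts only to $F$; a direct computation using $\sigma(S_1)=0$, $\sigma(S_2)=g$ and the group-parts $0,\ h-g,\ g,\ 0$ of the connectors then checks that every one of the (at most two) lifts of each of the four subsequences has sum in $\{g,h\}$, while the lift $F$ of $\pi(F)$ has sum $g+h$; since $g,h\neq 0$ (as $0\nmid A$) none of the former equals $g+h$, and the condition follows. I expect the main obstacle to be precisely the design of $F$: one must choose the six factors so that simultaneously $\sigma(F)=g+h$, $\cm(F)=0$ — which forces the two connectors with $\pi$-image $e_1+e_2$ to carry distinct group elements and thereby makes the hypothesis $g\neq h$ genuinely necessary — and every minimal zero-sum subsequence of $\pi(F)$ is rigid enough (meeting each $e_i$ with multiplicity $0$ or $n_i-2$) that its few lifts can be pinned down and steered into $\Sigma(gh)\setminus\{g+h\}$; once such an $F$ is in place, the case analysis and the lift-sum computation are routine and I would only sketch them, in parallel with the corresponding part of the proof of Proposition~\ref{pro_cn4}.
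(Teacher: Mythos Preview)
Your proposal is correct and follows essentially the same approach as the paper's proof. The only difference is cosmetic: the paper takes $\sigma(S_1)=g$, $\sigma(S_2)=0$ and places the $+g$ shift on the connector $(-e_1+e_2+g)$, whereas you take $\sigma(S_1)=0$, $\sigma(S_2)=g$ and place it on $(e_1-e_2+g)$; the resulting classification of zero-sum subsequences of $\pi(F)$ and the verification that their lifts have sums in $\{g,h\}$ are identical in structure.
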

\begin{proof}
Let $\pi$ denote the canonical projection
from $G \oplus C_{n_1}\oplus C_{n_2}$ to $C_{n_1}\oplus C_{n_2}$.
Let $\{e_1, e_2\}$ be an independent generating set of $C_{n_1}\oplus C_{n_2}$ with $\ord(e_i) = n_i$.

Let $A\in \minsequ{k+2}(G)$ such that there exist distinct $g,h\in G$ with $\cm((gh)^{-1}A)\le k$; by our assumption on $G$ such a sequence exists.

For $i \in [1,2]$, let $S_i \in \sequ{0}(G)$ with $|S_i|= n_i-2$ and $\sigma(S_1)=g$ and $\sigma(S_2)=0$ (these exist by Lemma \ref{lr_lem_sum}).
Now, let $F = (e_1+S_1)(e_2+S_2)(e_1+e_2)(e_1+e_2+h-g)(e_1-e_2)(-e_1+e_2+g)$.

We note that $\cm((gh)^{-1}AF)\le k$ and $\sigma((gh)^{-1}AF)= 0$.
It thus remains to show that $(gh)^{-1}AF$ is a minimal zero-sum sequence.
Let $D \mid (gh)^{-1}AF$ be a non-empty zero-sum subsequence;
let $D=D_1D_2$ such that $D_1 \mid (gh)^{-1}A$ and $D_2 \mid F$.
We have to show that $D=(gh)^{-1}AF$.
Since $(gh)^{-1}A$ is zero-sumfree, it follows that $D_2$ is non-empty. If $D_2=F$, the claim follows,
since $\sigma((gh)^{-1}A)= -g-h = - \sigma(F)$. So, suppose $D_2 \neq F$.

We note that $\sigma(\pi(D_2))=0$.
It follows that $\pi(D_2) \in \{ (-e_1+e_2)(e_1-e_2), (-e_1+e_2)(e_1+e_2)e_2^{n_2-2}, (e_1-e_2)(e_1+e_2) e_1^{n_1-2},
(e_1+e_2)^2 e_1^{n_1 - 2} e_2^{n_2 - 2}\}$.
This implies that $\sigma(D_2) \in \{g, h\}$; note that $D_2$ is determined by $\pi(D_2)$ up to
at most one element, namely $(e_1+e_2)$ and $(e_1+e_2+g)$.
Yet, this yields a contradiction to $\sigma(D_1)= - \sigma(D_2)$, since $-g,-h\notin \Sigma((gh)^{-1}A)$ as $A$ is a minimal zero-sum sequence.
\end{proof}

In the following result we summarize the implications
of these results, for determining lower bounds for the Olson and the Strong Davenport constant for homocyclic groups; for a discussion of the quality of these bounds see the following section.
Results along the lines of those established in Section \ref{sec_basic}
could be obtained as well; yet, to avoid technicalities,
we only address this important special case.

\begin{thm}
\label{thm_homocyclic}
Let $n,r\in \mathbb{N}$ and suppose $n \ge 3$ and $r\ge 4$.
Then, for each $k\in \mathbb{N}_0$,
\[\SD_k(C_n^r)\ge \SD_{k+r-1}(C_n) + (r-1)(n-1).
\]
In particular,
\[\SD_k(C_n^r) \ge
\Das(C_n^r) -  \max \left \{0,n - k-r+1  - \left\lfloor \frac{-1 + \sqrt{1 + 8(\max\{n-k-r+1,0\})}}{2} \right \rfloor \right \}
\]
Moreover, if $\Dav(C_n^r) = \Das(C_n^r)$, in particular if $n$ is a prime power,
then, for $r \ge n-k$,
\[
\SD_k(C_n^r) = \Das(C_n^r).
\]
Additionally,
\[\SD_k(C_n^3)\ge \Das(C_n^3) - \max \left\{1,n -k-2- \left\lfloor \frac{-1 + \sqrt{1+ 8\max\{n-3-k, 0\}}}{2} \right\rfloor \right\}.\]
\end{thm}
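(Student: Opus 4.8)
The plan is to derive the inequality from a single application of Proposition~\ref{pro_cn3}. Writing $C_n^3 = C_n \oplus C_n \oplus C_n$ and applying that proposition with $G = C_n$ and $n_1 = n_2 = n$ (the hypotheses $\exp(C_n) = n \ge 3$ and $n \le |C_n| + 1$ hold), one gets
\[
\SD_k(C_n^3) \ge \SD_{k+2}^{2}(C_n) + 2(n-1).
\]
Since $\Das(C_n^3) = 3n - 2$, the asserted bound is then equivalent to proving, over the cyclic group $C_n$, that
\[
\SD_{k+2}^{2}(C_n) \ge (k+2) + \left\lfloor \frac{-1 + \sqrt{1 + 8\max\{n-k-3,0\}}}{2} \right\rfloor \qquad \text{whenever } k \le n-4,
\]
together with $\SD_{k+2}^{2}(C_n) \ge n-1$ for $k \ge n-4$. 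These two estimates agree at $k = n-4$, and for $k \le n-4$ the first one never exceeds $n-1$ (because $\lfloor(-1+\sqrt{1+8m})/2\rfloor \le m$ for $m \ge 1$); so after substituting them into the displayed inequality and simplifying with $\Das(C_n^3) = 3n-2$, the outer $\max\{1,\cdot\}$ in the statement merely records which of the two regimes $k$ falls into.

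To prove these lower bounds for $\SD_{k+2}^{2}(C_n)$ I would exhibit explicit minimal zero-sum sequences over $C_n$, closely following the construction in the proof of Lemma~\ref{lem_standardbound}; fix a generating element $e$. For $k \ge n-4$ the sequence $A = e^{n-2}(2e)$ does the job: it has length $n-1$, $\cm(A) = n-3 \le k+2$, and $\cm((e\cdot 2e)^{-1}A) = \cm(e^{n-3}) = n-4 \le k$ with $e \ne 2e$. For $k \le n-4$ set $d = \lfloor(-1+\sqrt{1+8\max\{n-k-3,0\}})/2\rfloor$, so $d \ge 1$. If $d = 1$ take $A = e^{k+2}\bigl((n-k-2)e\bigr)$ (here $n-k-2 \ge 2$), a minimal zero-sum sequence of length $k+3 = (k+2)+d$. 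If $d \ge 2$, start from $S = e^{k+2}(2e)^2\prod_{i=3}^{d}(ie)$, whose terms are ``positive'' and sum to $k+3+d(d+1)/2 \le n$ by the choice of $d$; if that sum equals $n$ set $A=S$, and otherwise replace its largest term $de$ by $-\sigma((de)^{-1}S) = \bigl(n-k-3-d(d-1)/2\bigr)e$, which has index exceeding $d$, thereby closing $S$ up to a minimal zero-sum sequence $A$ of length $k+d+2$. In every case $A$ has precisely two terms of multiplicity at least two, namely $e$ (with multiplicity $k+2$) and $2e$ (with multiplicity $2$), all other terms being simple; hence $\cm(A) = k+2$, and deleting one copy each of $e$ and $2e$ leaves cumulated multiplicity $k$.

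The step I expect to require some care --- though nothing deep --- is the boundary bookkeeping: confirming that the closing-up step indeed produces a genuinely new element (so that $\cm(A)$ equals $k+2$ and no proper subsum vanishes), that $n-k-2 \ge 2$ in the case $d=1$, and that the small ranges of $k$ near $n-6$, $n-5$, $n-4$, $n-3$ --- together with the degenerate cases $n \in \{3,4,5\}$ --- dovetail so that the two families of bounds for $\SD_{k+2}^{2}(C_n)$ jointly cover all $k \in \mathbb{N}_0$ and reproduce the stated formula. This is the same sort of case analysis as in Lemma~\ref{lem_standardbound}, and I would treat it briefly, as is done there.
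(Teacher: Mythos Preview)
Your proposal treats only the ``Additionally'' inequality for $C_n^3$ and is silent on the main assertion for $r\ge 4$ and its corollaries. The paper handles those separately: the base case $r=4$ comes from Proposition~\ref{pro_cn4}, the inductive step from Corollary~\ref{lr_lem_maintech}, and the explicit bound and the equality statement then follow from Lemma~\ref{lem_standardbound} together with $\SD_k\le\Dav=\Das$. None of this is reachable via Proposition~\ref{pro_cn3} alone, so as a proof of the full theorem your proposal is incomplete.

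For the $r=3$ part your argument is essentially the paper's own: reduce via Proposition~\ref{pro_cn3} to $\SD_{k+2}^{2}(C_n)\ge\min\{n-1,\,k+2+d\}$ and then exhibit explicit witnesses. The paper takes $A=e(2e)e^{n-3}$ for $k\ge n-3$ and $A=e(2e)e^{k}\prod_{i=1}^{\ell-1}(ie)\,(xe)$ for $k<n-3$, which unravels to exactly your $e^{k+2}(2e)^2\prod_{i=3}^{d}(ie)$ closed up by a final term. Your only slip is the blanket claim that $2e$ has multiplicity two ``in every case'': when $d=1$, or when $d=2$ and the closing-up replaces a copy of $2e$, the multiplicity of $2e$ is one; but as you anticipate, this is harmless bookkeeping and a distinct pair $(g,h)$ with $\cm((gh)^{-1}A)\le k$ is still available.
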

\begin{proof}
We proceed by induction on $r$.
For $r=4$, the assertion is merely Proposition \ref{pro_cn4}.
Assume the assertion holds for some $r\ge 4$.
By Corollary \ref{lr_lem_maintech}, we get that $\SD_{k}(C_n^{r+1})\ge \SD_{k+1}(C_n^r)+n-1$.
By induction hypothesis, $\SD_{k+1}(C_n^r)\ge \SD_{k+1+(r-1)}(C_n)+(r-1)(n-1)$.
Thus, the claim follows.
To get the `in particular'-statement, we use the lower bound
on $\SD_{k+r-1}(C_n)$ established in Lemma \ref{lem_standardbound}.
Finally, the `moreover'-statement follows by the just established lower bound,
which in this case is $\Das(C_n^r)$,
and the fact that $\SD_{k}(C_n^r)\le \Dav(C_n^r)=\Das(C_n^r)$; the former
inequality by Lemma \ref{lr_lem_basic0} and the latter equation by assumption.

To prove the `additionally'-statement, it suffices by Proposition \ref{pro_cn3}, using $\SD_{k+2}^{2}$ as defined there, to show that
\[\SD_{k+2}^{2}(C_n) \ge \min \left \{ n-1, 2+k+ \left\lfloor \frac{-1 + \sqrt{1+ 8\max\{n-3-k, 0\}}}{2} \right\rfloor \right\}.\]
Let $e$ be a generating element.
For $k \ge n-3$, we set $A= e(2e)e^{n-3}$,
and for $k<n-3$, we set $A=e(2e)e^k \prod_{i=1}^{\ell-1}(ie)(xe)$
where $\ell\in \mathbb{N}$ is maximal with $\ell (\ell + 1)/2 \le n-3 - k$ and $x=n - k- \ell (\ell - 1)/2$.
\end{proof}

Recalling that $\Ol(C_p^2) = (p-1) + 1 + \lfloor \frac{-1 + \sqrt{1+ 8(p-1)}}{2} \rfloor$ for prime $p>6000$ and
by the just established result,
we see that
\[\begin{split}
\Ol(C_p^3) & \ge 2(p-1) + 3+ \left \lfloor \frac{-1 + \sqrt{1+ 8(p-4)}}{2} \right\rfloor\\
  & > 2(p-1) + 1 + \left \lfloor \frac{-1 + \sqrt{1+ 8(p-1)}}{2} \right \rfloor = (p-1) + \Ol(C_p^2),\end{split}\]
showing that equality in \eqref{eq_GRT} fails already for $r=3$ for all but finitely many primes. Evidently, equality holds for $p=2$, yet this might well be the only case (this would follow, if $\Ol(C_p^2)=(p-1)+\Ol(C_p)$ for all primes, which is conceivable; for results for primes up to $7$ see the following section).

We end this section by pointing out that the construction of Proposition
\ref{pro_cn4} is also of relevance for sequences.

\begin{rem}
\label{rem_sequence}
Let $n\ge 3$ and $C_n^4 = \oplus_{i=1}^4 \langle e_i \rangle$. The sequence
\[
\begin{split}
& (e_1+S_1)(e_2+S_2)(e_3+S_3)e_4^{n-3}\\
& (e_1+e_3)(e_1-e_3)(e_1+e_2+e_4)(-e_1+e_2+2e_4)(e_2+e_3+2e_4)(-e_2+e_3),
\end{split}
\]
where $S_1,S_2,S_3 \in \sequ{}(C_n)$ with $|S_i|=n-2$ and
$\sigma(S_1) = e_4$, $\sigma(S_2)= -2e_4$, and
$\sigma(S_3) = -e_4$
is a minimal zero-sum sequence of length $\Das(C_n^r)$.
Thus, if $\Dav(C_n^r)=\Das(C_n^r)$, in particular if $n$ is a prime power, then it is a minimal zero-sum sequence of maximum length.
\end{rem}
The point of this remark is that this sequence does not
arise in the `usual' way from zero-sum sequences of maximal length in $C_n^3$;
by `usual' way, we mean that $n$ elements from a coset are `added' and one element is `removed'.

\section{Groups with small exponent,  computational results, and discussion}
\label{sec_small}

We determined the exact value of the Olson constant, and  except for a single group, the Strong Davenport constant, for groups with very small exponent in an absolute sense, namely $\exp(G)\le 5$.
We point out that for our reasoning it is not only important that the
exponent is `small,' but it is inevitable that $\exp(G)$ is a prime power.
In particular, we consider it as significantly more challenging to extend our result, say, to $\exp(G)=6$ than to $\exp(G)=7$.

For groups with $\exp(G)\le 3$ considerable parts of the result are known
(see \cite{baliski,chabela,GGS,sub}; for
partial result for groups of exponent $4$ and $5$ see \cite{baginski,sub}). We give a proof that stresses, which parts of the result follow by the methods
detailed before, and which require an additional argument; we do so even if a direct argument
would be simpler.

\begin{thm}
Let $r \in \mathbb{N}_0$.
\begin{enumerate}
\item For $r\neq 1$, we have $\Ol(C_2^r)= \SD(C_2^r)=\Das(C_2^r)=r+1$. And, $\Ol(C_2)=2$ and $\SD(C_2)=1$.
\item For $r\ge 4$, we have $\Ol(C_3^r)= \SD(C_3^r)=\Das(C_3^r)=2 r + 1$. And, 
\begin{itemize}
\item $\Ol(C_3^3)=\Das(C_3^3)=7$ and $\SD(C_3^3)=6$;
\item $\Ol(C_3^2)=4$ and $\SD(C_3^2)=3$;
\item $\Ol(C_3)= \SD(C_3)=2$.
\end{itemize}
\item For $r \ge 5$, we have $\Ol(C_5^r)=\SD(C_5^r)=\Das(C_5^r)=4r+1$.
And,
\begin{itemize}
\item $\Ol(C_5^4)=\Das(C_5^4)=17$ and $16 \le \SD(C_5^4)\le 17$;
\item $\Ol(C_5^3)=12$ and $\SD(C_5^3)=11$;
\item $\Ol(C_5^2)=7$ and $\SD(C_5^2)=6$; 
\item$\Ol(C_5)=3$ and $\SD(C_5)=2$.
\end{itemize}
\item For $G$ a finite abelian group with $\exp(G)=4$, we have $\Ol(G)=\SD(G)=\Das(G)$, with the following exceptions:
\begin{itemize}
\item $\Ol(C_4^3)=\SD(C_4^3)=9$;
\item $\Ol(C_4^2)=6$ and $\SD(C_4^2)=5$;
\item $\Ol(C_4)=3$ and $\SD(C_4)=2$;
\item $\Ol(C_2 \oplus C_4^2) = \Das(C_2 \oplus C_4^2) = 8$ and $\SD(C_2\oplus C_4^2)=7$;
\item $\Ol(C_2^2 \oplus C_4) = \Das(C_2^2 \oplus C_4) = 6$ and $\SD(C_2^2\oplus C_4)=5$;
\item $\Ol(C_2 \oplus C_4) = \SD(C_2 \oplus C_4)=4$.
\end{itemize}
\end{enumerate}
\end{thm}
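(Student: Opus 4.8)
The plan is to organize the proof by exponent, treating each of the four cases in turn and, within each, separating the "large rank" regime (where the value equals $\Das(G)$ and follows from the machinery of Sections \ref{sec_basic} and \ref{sec_refined}) from the finitely many "small rank" exceptions (which require a mix of the lower-bound constructions from earlier sections and a computer search for the matching upper bound). For the large-rank parts: for $C_2^r$ with $r\ge 2$ one simply checks directly that a maximal zero-sumfree/minimal zero-sum set is an independent generating set together with the sum of its elements, so all three invariants equal $r+1$; the special behaviour at $r=1$ is exactly the elementary-$2$-group phenomenon flagged after Corollary \ref{lr_lem_maintech} and Lemma \ref{lr_lem_sum}. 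For $C_3^r$ with $r\ge 4$, $C_5^r$ with $r\ge 5$, and $\exp(G)=4$ of sufficiently large rank, the equality $\SD_k(G)=\Das(G)$ for $k\in\{0,1\}$ is supplied by Corollary \ref{lr_co_pgr} (equivalently Theorem \ref{thm_homocyclic}): one needs $\mathsf{r}(G)\ge \exp(G)+1-k$, i.e.\ $\mathsf{r}\ge \exp(G)$ for $\SD$ and $\mathsf{r}\ge \exp(G)-1$ for $\Ol$, together with $\Dav(G)=\Das(G)$, which holds for all $p$-groups. One must still handle the groups of exponent $4$ that are not homocyclic but have "large enough" rank by the sharper form of the criterion ($\mathsf{r}(G)\ge n_t+1-k$ with $n_t$ as in Theorem \ref{lr_thm_maintech}), choosing the splitting $t$ to peel off the $C_2$-components cheaply.

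Next I would address the lower bounds for the exceptional small-rank groups. For each such group $G$ and each of $\SD_0(G)=\SD(G)$ and $\SD_1(G)=\Ol(G)$ I would exhibit an explicit extremal sequence, in almost all cases produced by the constructions already developed: Corollary \ref{lr_lem_maintech} and Theorem \ref{thm_abstractbound} (with a subgroup $H$ chosen so that $G/H$ is cyclic or of small rank) give $\Ol$, while the relation $\SD_{k}(G)\ge \SD_{k+1}(G)-1$ of Lemma \ref{lr_lem_basic} and the construction of Proposition \ref{pro_cn3} (via $\SD^{2}_{k+2}$) give the $\SD$ lower bounds; for $C_3^3$, $C_5^3$, $C_5^4$, $C_4^3$ one can also invoke Theorem \ref{thm_homocyclic} directly with small $r$. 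The values for $C_3$, $C_3^2$, $C_5$, $C_5^2$, $C_4$, $C_4^2$, $C_2\oplus C_4$ are low-dimensional and can be pinned down by hand or by the results of \cite{sub,chabela,GGS,baliski}. For the $\Das$-values I would just compute $\Das(C_n^r)=r(n-1)+1$ and $\Das(C_2^a\oplus C_4^b)=a+3b+1$ directly from the formula in Section \ref{sec_prel}.

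The matching upper bounds are where the real work lies, and this is the main obstacle: for the genuinely exceptional groups---$C_3^3$, $C_3^2$, $C_5^4$, $C_5^3$, $C_5^2$, $C_5$, $C_4^3$, $C_4^2$, $C_4$, $C_2\oplus C_4^2$, $C_2^2\oplus C_4$, $C_2\oplus C_4$---I would verify by an exhaustive (but pruned) computer search that there is no zero-sumfree set of cardinality one larger than claimed, and similarly for minimal zero-sum sets. The search space is manageable because the exponent and rank are tiny; the pruning exploits the group's automorphism group and the fact that one may fix the first few elements of the set. I would state explicitly that this step is computer-aided, describe the algorithm briefly (a backtracking search over squarefree sequences, rejecting a partial set as soon as a zero subsum appears, and at each leaf checking minimality), and report that the outputs confirm the asserted values. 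The one genuinely open slot, $\SD(C_5^4)\in\{16,17\}$, is left as a bracket precisely because the relevant search (whether a minimal zero-sum \emph{set} of size $17$ exists in $C_5^4$, as opposed to a minimal zero-sum sequence with cumulated multiplicity $1$) is at the edge of what the computation resolves; I would note that $16\le \SD(C_5^4)$ by Lemma \ref{lr_lem_basic} from $\Ol(C_5^4)=17$, and $\SD(C_5^4)\le 17=\Das(C_5^4)$ trivially. Finally I would cross-check the exponent-$4$ list for completeness: every group with $\exp(G)=4$ is $C_2^a\oplus C_4^b$ with $b\ge 1$, and the criterion of Corollary \ref{lr_co_pgr} (in its sharpened form) fails only for the six listed groups, so the "with the following exceptions" clause is exhaustive.
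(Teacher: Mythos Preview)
Your plan is essentially the paper's: dispatch the large-rank cases via Corollary~\ref{lr_co_pgr} and Theorem~\ref{thm_homocyclic}, then handle the finitely many small-rank groups by explicit lower-bound constructions together with a pruned computer search for the upper bounds. The organization, the list of exceptional groups, and the treatment of $\SD(C_5^4)$ all match.

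There is one step that, as written, does not quite close. For the non-homocyclic exponent-$4$ groups $C_2^a\oplus C_4^b$ you invoke the sharpened criterion $\mathsf{r}(G)\ge n_t+1-k$ from the remark after Corollary~\ref{lr_co_pgr}. That inequality comes from part~2 of Theorem~\ref{lr_thm_maintech}; with $n_t=4$ it gives $\mathsf{r}\ge 5$ for $k=0$, so it misses $\SD$ for the rank-$4$ groups $C_2\oplus C_4^3$ and $C_2^2\oplus C_4^2$, which are \emph{not} in your exception list. (For $a\ge 3$ you can take $n_t=2$ and the condition is harmless, but for $a\in\{1,2\}$ no admissible $t$ has $n_t=2$.) The fix is to use part~1 of Theorem~\ref{lr_thm_maintech} instead: choose $t=a+1$, so that $n_{t-1}=2\neq 4=n_t$, and the threshold improves by one to $\mathsf{r}\ge n_t-k=4$, which covers exactly what is needed. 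The paper takes a different route here: it observes $\SD_2(C_2\oplus C_4)=\Das(C_2\oplus C_4)$ via Corollary~\ref{lr_co_2r} and then feeds this into Proposition~\ref{lr_prop_large} to obtain $\SD(G)=\Das(G)$ for every exponent-$4$ group of rank $\ge 4$ containing a $C_2\oplus C_4$ summand. Either argument works once the correct version is cited. (A minor slip: in your $C_2^r$ paragraph, the set ``independent generating set together with the sum of its elements'' is the extremal minimal zero-sum set, not a zero-sumfree set; the zero-sumfree sets are just the independent sets.)
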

We were unable to determine $\SD(C_5^4)$; we know $16 \le \SD(C_5^4) \le 17$ and the result of partial computations suggest $\SD(C_5^4)=16$. All the values in this result match one of our lower bound constructions,
except for $C_2^2$, where the particular phenomenon discussed after Corollary \ref{lr_lem_maintech} is relevant,
which we ignored on purpose.

\begin{proof}
We recall (see Lemma \ref{lr_lem_basic0}) that $\SD(G)\le \Ol(G)\le \Dav(G)$;
and for all groups appearing in this result, since they are $p$-groups,
 we have $\Dav(G)=\Das(G)$.

\noindent
1. By Corollary \ref{lr_co_pgr}, the result is clear for $r\ge 3$ for the Strong Davenport constant and for $r \ge 2$ for the Olson constant. Yet, also $\SD(C_2^2)\ge 3$, note the example $(e_1+e_2)e_1e_2$ for independent $e_1$ and $e_2$. For $C_2=\{0,e\}$ there are precisely two minimal zero-sum sequences, namely $0$ and $e^2$, and the claim follows.

\noindent
2. For $r \ge 4$, and in addition for
$r=3$ in case of the Olson constant, the claim follows again by Corollary \ref{lr_co_pgr}.
Now, the remaining cases can be solved by computation (cf.~below for details), or by the following argument.

In \cite{GGS} it is proved that $\SD(C_3^3)=6$. By Lemma \ref{lr_lem_basic} and Corollary \ref{lr_co_2r} we know that $4 \le \Ol(C_3^2)< \Dav(C_3^2)=5$. And, $2\le \SD(C_3)\le \Ol(C_3)< 3$; note that for $C_3= \{0,e,-e\}$,
the only minimal zero-sum sequences are $0$, $(-e)e$, and $(\pm e)^3$.

It remains to show that $\SD(C_3^2)=3$.
By Lemma \ref{lr_lem_basic} and $\Ol(C_3^2) = 4$ it follows that $\SD(C_3^2) \ge 3$, and it remains to show that $\SD(C_3^2)\neq 4$.
Suppose $A\in \minsequ{0}(C_3^2)$ with $|A|=4$. A certainly contains two independent elements $e_1$ and $e_2$.
Let $g, h\in C_3^2\setminus \{e_1,e_2\}$ such that $A=e_1e_2gh$. We note that $g,h\notin  \{0, -e_1, -e_2, -e_1-e_2\}$ and $\{g,h\} \neq \{- e_1 + e_2, e_1 - e_2\}$, as otherwise we would get a proper zero-sum subsequence.
So we have that $\{g,h\}$ is equal to $\{e_1+e_2, -e_1 + e_2\}$ or  $\{e_1+e_2,e_1 - e_2\}$.
Yet, neither choice yields an element of $\minsequ{0}(C_3^2)$.

\noindent
3. By Theorem \ref{thm_homocyclic} the result is clear for
$r\ge 5$, and $\Ol(C_5^4)=\Das(C_5^4)$ follows as well.
The remaining cases are handled by computation
(cf. below for details; also see \cite{sub} for $\Ol(C_5^2)$ and $\Ol(C_5)$).

\noindent
4. For $G$ with rank at least $5$ and $C_4^4$ the assertion follows by
Corollary \ref{lr_co_pgr} and Theorem \ref{thm_homocyclic}, respectively.
By Corollary \ref{lr_co_2r} we know that $\SD_2(C_2\oplus C_4)=\Das(C_2\oplus C_4)$.
Thus, it follows by Proposition \ref{lr_prop_large} that $\Ol(G)=\Das(G)$ for any group of exponent $4$ and rank at least $3$,
having $C_2 \oplus C_4$ as a direct summand, and likewise
$\SD(G)=\Das(G)$ for any group of exponent $4$ and rank at least $4$.

The remaining values are again determined by computation.
\end{proof}

We have computed the values for two additional groups.
\begin{rem} \
\begin{itemize}
\item $\SD(C_6^3)=\Ol(C_6^3)= 14$.
\item $\SD(C_7^3)=16$ and $\Ol(C_7^3)=17$.
\end{itemize}
\end{rem}

We give a brief indication how our computation were carried out; we keep this discussion brief, as the approach is very similar to that of \cite{lzs}.

Based on the ideas from \cite{lzs}, one can formulate an algorithm for the computation of the Olson and Strong Davenport constant for a finite abelian group $G$. A naive brute force search over all subsets of $G$ is infeasible, already for quite small $G$. But with a slight variant of the algorithms from \cite{lzs}---roughly speaking, constructing zero-sumfree sets recursively---, we can avoid most of the redundant checks and therefore speed up the computation dramatically. For additional details on further speeding up these types of algorithms by pre-computations, special alignment of the pre-computed data, and on the parallelization aspects, the reader is referred to \cite[Section 3]{lzs}.

All computations were performed on a SUN X4600 node with 8 QuadCore-Opteron CPUs and 256GB RAM running with up to 32 openmp threads. Here, we give a table with the computation times (for the non-trivially fast examples).
\begin{center}
\begin{tabular}{crrc}
\bfseries group & cputime (hh:mm:ss) \\
\hline
$C_4^3$ & 00:00:18 \\
$C_5^3$ & 00:00:35 \\
$C_6^3$ & 03:03:44 \\
$C_7^3$ & 262:21:20
\end{tabular}
\end{center}

In view of the various lower bound constructions presented in this paper,
evidently the question arises how close to the true values these bounds are,
and we already gave some indications throughout the paper.
Although, we hope that the constructions presented in this paper
are flexible enough to yield good bounds in general and the exact value
for  various groups,
they obviously do not yield the exact value for all groups (see the remark after Corollary \ref{co_excess}).
And, as we saw in Section \ref{sec_large}, even in applying our method there
is considerable flexibility, so that in certain cases the
lower bounds we mentioned explicitly can be improved with the methods at hand.

Yet, for certain groups, in particular for elementary $p$-groups, neither of the above problems
arises. We state or opinion on this case in more detail below.

For rank $3$, we have some, though admittedly not too much (indeed, as we were aware of most of the explicit results before establishing the general lower bound, this evidence is even weaker), computational evidence that our constructions are optimal. We do not consider it as strong enough to justify to conjecture that the bound established in Theorem  \ref{thm_homocyclic} is sharp.  Still, we would be surprised if it were not sharp.

For rank $4$ and greater, there is not even computational evidence except for
$C_5^4$ and again we were aware of this example before coming up with our construction and, indeed, used this knowledge as motivation for our construction.
Though, we evidently hope that the bound is sharp, this is almost merely wishful thinking. A reason why we still think that this might be true, despite the fact that up to rank $4$ we encountered a new phenomenon for each rank, is the fact that the choice of the set in $C_p^3 \oplus C_p$ used in Proposition \ref{pro_cn4} is good enough to expand it to a zero-sum \emph{sequence} of maximal length in $C_p^4$ (see Remark \ref{rem_sequence}) and thus $C_p^r$ for $r\ge 4$. Thus, with this construction we overcame the `irregularity' arising from the fact that we cannot apply Corollary \ref{lr_lem_maintech} with $m=p$ for $C_p \oplus C_p$, i.e., when passing from rank one to rank two, and we hope that this was the only remaining obstacle towards a uniform formula for these invariants. As said, this hope is vague. Yet, at least we believe that if there is some $r_0$ such that there is a `uniform' formula for $\SD(C_p^r)$ and $\Ol(C_p^r)$ for all  $r\ge r_0$  (and all $p\ge p(r_0)$), then this $r_0$ is already $4$ and this formula is the lower bound we established.

 \end{document}